\definecolor{ForestGreen}{rgb}{0.1,0.6,0.05}
\definecolor{EgyptBlue}{rgb}{0.063,0.1,0.6}
\newtheorem{theorem}{Theorem}
\newtheorem{proposition}[theorem]{Proposition}
\newtheorem{lemma}[theorem]{Lemma}
\newtheorem{remark}[theorem]{Remark}
\newtheorem{definition}[theorem]{Definition}
\numberwithin{equation}{section}
\numberwithin{theorem}{section}
\newcommand{\qed}{{\unskip\nobreak\hfil%
                 \penalty50\hskip .001pt\hbox{}\nobreak\hfil
                 \vrule height 1.7ex width .9ex depth .2ex
                 \parfillskip=0pt\finalhyphendemerits=0\medbreak}\rm}
\newenvironment{proof}{\begin{trivlist}\item[\hskip%
                      \labelsep{{\em Proof.}\ }]\rm}%
                      {\hfill\qed\rm\end{trivlist}}
\newenvironment{proof*}[1]{\begin{trivlist}\item[\hskip%
                          \labelsep{{\bf Proof of \/{\rm\bf #1.}}\quad}]\rm}%
                          {\hfill\qed\rm\end{trivlist}}
\newcommand{\W}{W_0^{1,p}}
\newcommand{\intO}{\int_\Omega}
\newcommand{\C}{C^1_0(\overline{\Omega})}
\newcommand{\I}{I_{\alpha,\beta}}
\newcommand{\f}{f_{\alpha,\beta}}
\newcommand{\E}{E_{\alpha,\beta}}
\title{On positive solutions for $(p,q)$-Laplace equations 
with two parameters
\footnote{AMS Subject Classifications: 35J62, 35J20, 35P30}}
\author{ 
\normalsize Vladimir Bobkov\footnote{Corresponding author}
\footnote{The first author was partially supported by the Russian Foundation for Basic Research (projects No. 13-01-00294 and 14-01-31054)}\\ 
{\small  Institute of Mathematics, Ufa Science Center of RAS}\\
{\small Chernyshevsky str. 112, Ufa 450008, Russia}\\
{\small e-mail: bobkovve@gmail.com}\\[0.5em] 
\normalsize Mieko Tanaka\\
{\small Department of  Mathematics, 
Tokyo University of Science}\\
{\small Kagurazaka 1-3, Shinjyuku-ku, Tokyo 162-8601, Japan}\\
{\small e-mail: tanaka@ma.kagu.tus.ac.jp}
}
\date{}
\begin{document}
\maketitle 

\begin{abstract}
We study the existence and non-existence of positive solutions for the $(p,q)$-Laplace equation
$-\Delta_p u -\Delta_q u=\alpha |u|^{p-2}u+\beta |u|^{q-2}u$, where $p \neq q$, 
under the zero Dirichlet boundary condition in $\Omega$. The main result
of our research is the construction of a continuous curve in $(\alpha,\beta)$ plane, 
which becomes a threshold between the existence and non-existence of positive solutions.
Furthermore, we provide the example of domains $\Omega$
for which the corresponding first Dirichlet eigenvalue of $-\Delta_p$ is not monotone w.r.t. $p > 1$.

\par
\smallskip
\noindent {\bf  keywords}:\  $(p,q)$-Laplacian,\ nonlinear
eigenvalue problems, global minimizer,\ mountain pass theorem,\ 
Nehari manifold, \ super- and sub-solution, \ modified Picone's  identity, \ extended functional
\end{abstract}

\section{Introduction}
In this article 
we are concerned with
the existence and non-existence of positive solutions for 
the following $(p,q)$-Laplace equation: 
\begin{equation*}
\left\{
\begin{aligned}
-&\Delta_p u -\Delta_q u=\alpha |u|^{p-2}u+\beta |u|^{q-2}u 
&&{\rm in}\ \Omega, \\
&u=0 &&{\rm on }\ \partial \Omega,
\end{aligned}
\right.
\leqno{(GEV;\alpha,\beta)}
\end{equation*}
where $\Omega\subset \mathbb{R}^N$ ($N \geq 1$) is a bounded domain with $C^2$-boundary $\partial \Omega$, $\alpha, \beta \in\mathbb{R}$ and $1<q<p<\infty$. Note that the assumption $q<p$ is taken without loss of generality, due to the symmetry of symbols in $(GEV;\alpha,\beta)$; therefore all results of the present work have corresponding counterparts in the case $q > p$.

Operator $\Delta_r$ stands for the usual
$r$-Laplacian, i.e., $\Delta_r u :={\rm div}\,(|\nabla u|^{r-2}\nabla
u)$ with $r\in(1,+\infty)$. Hereinafter, by $W_0^{1,r} := W_0^{1,r}(\Omega)$ we denote the standard Sobolev space, and $\| \cdot \|_r$ denotes the norm in $L^r(\Omega)$.

We say that $u\in\W$ is a (weak) solution of $(GEV;\alpha,\beta)$ if it holds
\begin{equation*}
\intO |\nabla u|^{p-2}\nabla u\nabla\varphi \,dx 
+\intO |\nabla u|^{q-2}\nabla u\nabla\varphi \,dx 
= \intO (\alpha 
|u|^{p-2}u+\beta |u|^{q-2}u) \, \varphi\,dx
\end{equation*}
for all $\varphi\in \W$. 

\smallskip
In the last few years, the $(p,q)$-Laplace operator attracts a lot of attention and has been studied by many authors (cf. \cite{CherIl, GoGu, Ramos, WeYand2009}). 
However, there are only few results regarding the eigenvalue problems for the 
$(p,q)$-Laplacian. 
The study of the problem $(GEV;\alpha,\beta)$ started in the form of a perturbation of 
{\it homogeneous} eigenvalue problem 
(see \cite{BB,BB-2,Mih,T-2014}). 
Recently, Motreanu and the second author in \cite{MT} introduced the eigenvalue problem 
\begin{equation}
\label{eq:MT}
\left\{
\begin{aligned}
-&\Delta_p u -\Delta_q u=\lambda (m_p(x)|u|^{p-2}u+m_q(x)|u|^{q-2}u)
&& {\rm in}\ \Omega, \\
&u=0 && {\rm on }\ \partial \Omega,
\end{aligned}\right.
\end{equation}
where indefinite weights $m_p, m_q\in L^\infty(\Omega)$ are
such that the Lebesgue measure of $\{x\in\Omega :~m_r(x)>0\}$ is positive 
($r=p, q$). 

In \cite{KTT}, by using the time map, 
Kajikiya et al. provided five typical examples 
of the bifurcation 
of positive solutions for the one-dimensional $(p,q)$-Laplace 
equation on the interval $(-L,L)$: 
\begin{equation*}
\left\{
\begin{aligned}
&(|u'|^{p-2}u')' + (|u'|^{q-2}u')' + \lambda(|u|^{p-2}u+|u|^{q-2}u)=0, \\
&u(-L)=u(L)=0. 
\end{aligned}
\right.
\end{equation*}
They have shown that the bifurcation curve changes depending on 
$p$, $q$ and $L$. 

Investigation of the problem $(GEV;\alpha,\beta)$ with \textit{two} spectral parameters, on the one hand, generalizes and complements the research \cite{MT}, and seems more natural, due to the structure of the equation.
 We restrict ourselves to the case where $m_p$ and $m_q$ are constants, to save transparency and simplicity of presentation.
 However, we emphasize that all the results of the present article remain valid for the problem
$(GEV;\alpha,\beta)$ with \textit{non-negative} weights:
$$
\left\{
\begin{aligned}
-&\Delta_p u -\Delta_q u=\alpha m_p(x) |u|^{p-2}u + \beta m_q(x) |u|^{q-2}u 
&& {\rm in}\ \Omega, \\
&u=0 && {\rm on }\ \partial \Omega,
\end{aligned}
\right.
$$
where $m_r \in L^\infty(x)$, $m_r \not\equiv 0$ and $m_r \geq 0$ a.e. in $\Omega$ for $r = p, q$. 

On the other hand, the statement of the problem $(GEV;\alpha,\beta)$ has been inspired by the conception of the Fu\v{c}ik spectrum, which, in terms of the $p$-Laplacian, consists of finding all spectral points $(\alpha, \beta)$ such that the problem
$$
\left\{
\begin{aligned}
-&\Delta_p u = \alpha u_+^{p-1} - \beta u_-^{p-1}
&& {\rm in}\ \Omega, \\
&u = 0 && {\rm on }\ \partial \Omega
\end{aligned}
\right.
$$
possesses non-trivial solutions (cf. \cite{cuestafucik}). 
The set of all such points $(\alpha,\beta)$ is called the Fu\v{c}ik spectrum for the $p$-Laplacian. 
Here we denote $u_\pm := \max\{\pm u, 0\}$ in $\Omega$. 
From a physical point of view, the values $\alpha$ and $\beta$ can be seen as a contribution of $u_+$ and $u_-$ to the steady-state behavior of the corresponding nonlinear reaction-diffusion equation.

\smallskip
In the present article we provide a \textit{complete} description of  $2$-dimensional sets in the $(\alpha, \beta)$ plane, which correspond
 to the existence and non-existence of positive solutions for $(GEV;\alpha,\beta)$, see Fig.~1 and Section 2 for precise statements. Moreover, we also give a description of the principal $1$-dimensional sets. The main result here
  is the construction of a continuous threshold curve $\mathcal{C}$, which separates the regions of the existence and non-existence of positive solutions for $(GEV;\alpha,\beta)$.

\begin{figure}[h!]
\center{\includegraphics[scale=1]{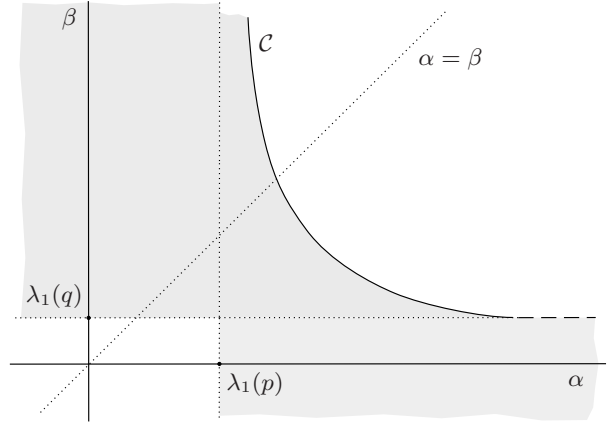} \\ a) {\bf (LI)} holds}
\vfill
\center{\includegraphics[scale=1]{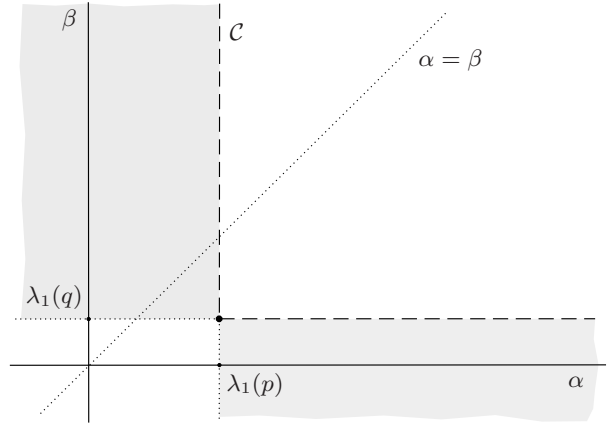} \\ b) {\bf (LI)} is violated}
\caption{Shaded sets correspond to existence, unshaded to non-existence}
\end{figure}


\smallskip
As usual, we say that $\lambda$ is an eigenvalue of $-\Delta_r$ with
weight function $m_r\in L^\infty(\Omega)$ 
if the problem
\begin{equation*}
\left\{
\begin{aligned}
-&\Delta_r u =\lambda m_r(x)|u|^{r-2}u
&& {\rm in}\ \Omega, \\
&u=0 && {\rm on }\ \partial \Omega.
\end{aligned}
\right.
\leqno{(EV;r,\lambda)}
\end{equation*}
has a non-trivial solution. If the Lebesgue measure of $\{x\in\Omega :~m_r(x)>0\}$ is positive, then $(EV;r,\lambda)$ possesses the first positive eigenvalue
$\lambda_1(r,m_r)$ (cf. \cite{anane1987}), that can be
obtained by minimizing the Rayleigh quotient:
\begin{equation*}
\lambda_1(r,m_r)
:=\inf\left\{\frac{\intO |\nabla u|^r\,dx}{\intO m_r|u|^r\,dx}
\,:~u\in W_0^{1,r},\ \intO m_r|u|^r\,dx>0\right\}.
\end{equation*}
We note that $\lambda_1(r,m_r)$ is simple and isolated, and the corresponding eigenfunction $\varphi_1(r,m_r)$ belongs to $C^{1,\alpha_r}_0(\overline{\Omega})$, where $\alpha_r\in(0,1)$.
Hereinafter we will also use the notation $\lambda_1(r)$ for the first eigenvalue of $-\Delta_r$ without weight (that is, with $m_r\equiv 1$) and $\varphi_r$ for the corresponding eigenfunction.

\smallskip
In what follows, we will say that $\lambda_1(p)$ and $\lambda_1(q)$ 
have different eigenspaces if the
corresponding eigenfunctions $\varphi_p$ and $\varphi_q$ are linearly independent, i.e. the following assumption is satisfied:
\begin{itemize}
\item[{\bf (LI)}]
For any $k \neq 0$ it holds $\varphi_p \not\equiv k \varphi_q$ in $\Omega$.
\end{itemize}

Let us note that availability or violation of the assumption {\bf (LI)} significantly affects the sets of existence of solutions for $(GEV;\alpha,\beta)$, see Fig.~1 and the next section for precise statements. 

Although it is not shown, to the best of our knowledge, that {\bf (LI)} generally holds,
in the one-dimensional case $\lambda_1(p)$ and $\lambda_1(q)$ have different eigenspaces for $p \neq q$ (cf. [14]).
Therefore, we conjecture that {\bf (LI)} is always satisfied for the eigenvalue problems \textit{without} weights in the general $n$-dimensional case.

At the same time, {\bf (LI)} can be violated, if we consider eigenvalues $\lambda_1(p, m_p)$ and $\lambda_1(q, m_q)$ \textit{with} (non-negative) weights. The corresponding example is given in Appendix C. 
Hence, the breach of the assumption {\bf (LI)} 
may actually occur for the problem $(GEV;\alpha,\beta)$ with non-negative weights, for which, as noted above, all the results of the article hold.

\smallskip
Considering $\lambda_1(r)$ (or $\lambda_1(r, m_r)$) as a function of $r$ there also arise   questions about the behaviour of this function and its geometrical properties. 

It is known from Kajikiya et al. \cite{KTT} 
that in a one-dimensional case, i.e. $\Omega \subset \mathbb{R}$, the first eigenvalue $\lambda_1(r)$ is non-monotone 
w.r.t. $r > 1$ provided $\Omega=(-L,L)$ with $L>1$, that is, there exists a unique 
maximum point $r^*(L)>1$ of $\lambda_1(r)$ 
such that $\lambda_1(r)$ is strictly increasing on 
$(1,r^*(L))$ and strictly decreasing on $(r^*(L),\infty)$. 

In Appendix B we show that the same non-monotonicity of $\lambda_1(r)$ w.r.t. $r > 1$ occurs also in a higher dimensional case.

\smallskip
Now we give three results from \cite{MT}, where they were proved using the variational methods. These results will be used below.

\begin{theorem}[\protect{\cite[Theorem 1]{MT}}]\label{thm:MT-1} 
If it holds
$$
-\min\{\lambda_1(p,-m_p),\lambda_1(q,-m_q)\}< \lambda<
\min\{\lambda_1(p,m_p),\lambda_1(q,m_q)\},
$$
then \eqref{eq:MT} has no non-trivial solutions. 
\end{theorem}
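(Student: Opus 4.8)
The plan is to prove the non-existence statement of Theorem~\ref{thm:MT-1} by testing the weak formulation of \eqref{eq:MT} with the solution itself and estimating each term via the variational characterization of the first eigenvalues. Suppose $u \in W_0^{1,p}$ is a non-trivial solution of \eqref{eq:MT} for some $\lambda$ in the indicated range. Since $p > q$, by the Sobolev embedding $W_0^{1,p} \hookrightarrow W_0^{1,q}$ the function $u$ lies in $W_0^{1,q}$ as well, so it is an admissible test function. Plugging $\varphi = u$ into the weak formulation gives the identity
\begin{equation*}
\intO |\nabla u|^p\,dx + \intO |\nabla u|^q\,dx = \lambda \intO m_p(x)|u|^p\,dx + \lambda \intO m_q(x)|u|^q\,dx.
\end{equation*}

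Next I would split into the cases $\lambda \ge 0$ and $\lambda < 0$ and handle the $p$-terms and $q$-terms separately; by symmetry of the roles of $(p,m_p)$ and $(q,m_q)$ it suffices to explain one exponent. Consider $\lambda \ge 0$ (the case $\lambda \le 0$ is analogous, using the weights $-m_r$). Write $\intO m_p|u|^p\,dx = \intO m_p^+|u|^p\,dx - \intO m_p^-|u|^p\,dx$. The variational definition of $\lambda_1(p, m_p)$ yields $\lambda \intO m_p^+|u|^p\,dx \le \lambda \intO m_p|u|^p\,dx + \lambda\intO m_p^-|u|^p\,dx$, and from $\lambda < \lambda_1(p,m_p)$ together with the Rayleigh characterization one gets $\lambda \intO m_p|u|^p\,dx \le \frac{\lambda}{\lambda_1(p,m_p)}\intO|\nabla u|^p\,dx$ whenever $\intO m_p|u|^p\,dx > 0$ (and the left side is $\le 0$ otherwise, which is even better). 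A parallel estimate holds for the $q$-term using $\lambda < \lambda_1(q,m_q)$. Summing and inserting into the identity above produces
\begin{equation*}
\intO |\nabla u|^p\,dx + \intO |\nabla u|^q\,dx \le \frac{\lambda}{\lambda_1(p,m_p)}\intO |\nabla u|^p\,dx + \frac{\lambda}{\lambda_1(q,m_q)}\intO |\nabla u|^q\,dx,
\end{equation*}
and since $\lambda < \min\{\lambda_1(p,m_p),\lambda_1(q,m_q)\}$ both coefficients on the right are strictly less than $1$, forcing $\intO|\nabla u|^p\,dx = \intO|\nabla u|^q\,dx = 0$, hence $u \equiv 0$, a contradiction.

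The main obstacle, and the point requiring genuine care rather than routine manipulation, is handling the sign of $\intO m_r|u|^r\,dx$ correctly: the Rayleigh quotient bound $\lambda_1(r,m_r) \le \intO|\nabla u|^r\,dx \big/ \intO m_r|u|^r\,dx$ is only available when the denominator is positive, so the argument must be organized as a case distinction (positive denominator versus non-positive denominator) for each exponent, and one must verify that in every combination the resulting inequality still has strictly-sub-unit coefficients multiplying the gradient norms. For $\lambda < 0$ the same issue arises with $-m_r$ in place of $m_r$ and the bound $-\lambda < \lambda_1(r,-m_r)$; here one rewrites $\lambda \intO m_r|u|^r\,dx = (-\lambda)\intO(-m_r)|u|^r\,dx$ and applies the Rayleigh characterization of $\lambda_1(r,-m_r)$. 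Once this bookkeeping is done cleanly, the contradiction is immediate from strict positivity of the Sobolev norms for non-trivial $u$.
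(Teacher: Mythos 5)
Your proof is correct, and it uses exactly the technique this paper employs for the analogous Proposition~\ref{prop:non-exist} (test the equation with $u$ itself and bound each term by the Rayleigh-quotient characterization of the relevant first eigenvalue, with the case distinction on the sign of $\intO m_r|u|^r\,dx$ and of $\lambda$ being the only bookkeeping required); the paper itself does not reprove Theorem~\ref{thm:MT-1} but imports it from \cite{MT}. The only point worth making explicit is the degenerate case $m_r\ge 0$, where $\lambda_1(r,-m_r)=+\infty$ by convention and the branch with $\intO(-m_r)|u|^r>0$ simply cannot occur, so your estimate goes through unchanged.
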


\begin{theorem}[\protect{\cite[Theorem 3]{MT}}]\label{thm:MT-2}
Assume that there holds $\lambda_1(p,m_p)\not=\lambda_1(q,m_q)$
(resp. $\lambda_1(p,-m_p)\not=\lambda_1(q,-m_q)$). If
\begin{gather*}
\min\{\lambda_1(p,m_p),\lambda_1(q,m_q)\}<\lambda
<\max\{\lambda_1(p,m_p),\lambda_1(q,m_q)\}
\\
({\rm resp.}\quad
-\max\{\lambda_1(p,-m_p),\lambda_1(q,-m_q)\}<\lambda
<-\min\{\lambda_1(p,-m_p),\lambda_1(q,-m_q)\}),
\end{gather*}
then \eqref{eq:MT} has at least one positive solution. 
\end{theorem}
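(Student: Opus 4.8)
The plan is to realise the weak solutions of \eqref{eq:MT} as critical points of the $C^1$-functional
\[
 J_\lambda(u):=\frac1p\Bigl(\|\nabla u\|_p^p-\lambda\intO m_p|u|^p\Bigr)+\frac1q\Bigl(\|\nabla u\|_q^q-\lambda\intO m_q|u|^q\Bigr),\qquad u\in\W ,
\]
and to split the argument according to which of $\lambda_1(p,m_p),\lambda_1(q,m_q)$ is larger; by the symmetry $(m_p,m_q,\lambda)\mapsto(-m_p,-m_q,-\lambda)$ it suffices to treat the first (non-parenthetical) alternative, in which $\lambda>0$. To force positivity I will instead work with the truncated energy $\widetilde J_\lambda$ obtained from $J_\lambda$ by replacing $|u|$ with $u_+$: testing $\widetilde J_\lambda'(u)$ against $u_-$ shows every critical point of $\widetilde J_\lambda$ is non-negative, hence a weak solution of \eqref{eq:MT}, hence --- by $C^{1,\alpha}$ regularity and the strong maximum principle for $-\Delta_p-\Delta_q$ --- strictly positive in $\Omega$. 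A recurring elementary estimate is: if $0<\mu<\lambda_1(r,m_r)$ then $\|\nabla u\|_r^r-\mu\intO m_r u_+^{\,r}\ge c_r\|\nabla u\|_r^r$ with $c_r:=\min\{1,\,1-\mu/\lambda_1(r,m_r)\}>0$, proved by distinguishing the sign of $\intO m_r u_+^{\,r}$ and, when it is positive, applying the Rayleigh quotient characterisation of $\lambda_1(r,m_r)$ to $u_+$.

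\emph{Case $\lambda_1(q,m_q)<\lambda<\lambda_1(p,m_p)$.} The estimate with $r=p,\ \mu=\lambda$ shows the $p$-block of $\widetilde J_\lambda$ dominates a multiple of $\|\nabla u\|_p^p$, while the $q$-block is bounded below by $-C\|\nabla u\|_q^q\ge -C'\|\nabla u\|_p^q$ with $q<p$; hence $\widetilde J_\lambda$ is coercive on $\W$, and it is weakly lower semicontinuous (convexity of the gradient terms, compactness of $\W\hookrightarrow L^p\cap L^q$ for the weight terms), so it attains a global minimiser $u_0$, which is automatically a critical point and therefore non-negative. Evaluating along $t\,\varphi_1(q,m_q)$ (positive, so the truncation is inactive), the leading $t^q$-coefficient is $\tfrac1q(\lambda_1(q,m_q)-\lambda)\intO m_q\varphi_1(q,m_q)^q<0$, so $\widetilde J_\lambda(t\,\varphi_1(q,m_q))<0$ for small $t>0$; thus $u_0\not\equiv0$ and $u_0>0$.

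\emph{Case $\lambda_1(p,m_p)<\lambda<\lambda_1(q,m_q)$.} Here $\widetilde J_\lambda$ is unbounded below (along $t\,\varphi_1(p,m_p)$ the leading $t^p$-coefficient is $\tfrac1p(\lambda_1(p,m_p)-\lambda)\intO m_p\varphi_1(p,m_p)^p<0$), so I use the mountain pass theorem. The origin is a strict local minimum: the estimate with $r=q,\ \mu=\lambda$ bounds the $q$-block below by $\tfrac{c_q}q\|\nabla u\|_q^q$, and a normalisation argument (assume $u_n\to0$ in $\W$ with $\widetilde J_\lambda(u_n)\le0$, set $v_n:=u_n/\|\nabla u_n\|_p$, pass to the limit distinguishing $v_n\rightharpoonup v\neq0$ from $v_n\rightharpoonup0$ --- in the latter case $\intO m_p(u_{n,+})^p=o(\|\nabla u_n\|_p^p)$ while the $q$-block stays non-negative) produces a contradiction. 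With $\widetilde J_\lambda(t\,\varphi_1(p,m_p))\to-\infty$ this gives the mountain pass geometry at a level $c>0$. To verify $(PS)_c$: if $(u_n)$ is a $(PS)_c$ sequence with $\|\nabla u_n\|_p\to\infty$, put $w_n:=u_n/\|\nabla u_n\|_p$; dividing $\widetilde J_\lambda'(u_n)\to0$ by $\|\nabla u_n\|_p^{p-1}$, the $q$-terms acquire a vanishing factor $\|\nabla u_n\|_p^{q-p}$, and using the strong $L^p$-convergence of $w_n$ together with the $(S_+)$-property of $-\Delta_p$ one obtains $w_n\to w$ strongly with $\|\nabla w\|_p=1$ and $-\Delta_p w=\lambda m_p(w_+)^{p-1}$; testing with $w_-$ gives $w\ge0$, so $\lambda$ would be an eigenvalue of $-\Delta_p$ with weight $m_p$ having a non-negative eigenfunction, forcing $\lambda=\lambda_1(p,m_p)$ --- impossible. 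Hence $(PS)_c$ sequences are bounded, and the $(S_+)$-property of $-\Delta_p-\Delta_q$ yields a convergent subsequence with limit $u_0$ a critical point of $\widetilde J_\lambda$ at level $c>0=\widetilde J_\lambda(0)$; thus $u_0\not\equiv0$ and $u_0>0$.

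The hard part is the second case --- establishing the Palais--Smale condition for $\widetilde J_\lambda$ although it is unbounded below --- and within it, the boundedness of $(PS)_c$ sequences, which is exactly where the strict inequality $\lambda>\lambda_1(p,m_p)$ and the fact that $\lambda_1(p,m_p)$ is the only eigenvalue of $-\Delta_p$ (weight $m_p$) with a non-negative eigenfunction (from the simplicity and isolation recalled above) come into play. A secondary nuisance is the proof that the origin is a local minimum in this case, since $\|\nabla\cdot\|_p$ and $\|\nabla\cdot\|_q$ are not comparable on small balls of $\W$, so the $u_+$-truncation and the coercivity of the $q$-block ($\lambda<\lambda_1(q,m_q)$) must be exploited carefully.
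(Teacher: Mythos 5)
Note that the paper does not prove this statement: Theorem~\ref{thm:MT-2} is imported verbatim from \cite[Theorem 3]{MT}, with only the remark that it was obtained there ``using the variational methods.'' Your proposal is essentially that proof: global minimization of the truncated energy when $\lambda_1(q,m_q)<\lambda<\lambda_1(p,m_p)$ (coercivity from the $p$-block, negativity along $t\varphi_1(q,m_q)$) and the mountain pass theorem when $\lambda_1(p,m_p)<\lambda<\lambda_1(q,m_q)$, with boundedness of Palais--Smale sequences obtained by normalization and the fact that only $\lambda_1(p,m_p)$ admits a non-negative eigenfunction --- exactly the scheme the present paper itself deploys in Cases (iii)--(iv) of Proposition~\ref{prop:exist-1} and in Lemma~\ref{lem:bdd-sol}, so the approaches coincide. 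The one place to tighten is the mountain pass geometry: ``$\widetilde J_\lambda>0$ on a punctured ball'' is not quite enough; you need $\inf\{\widetilde J_\lambda(u):\|u\|_q=\rho\}\ge\delta>0$, but your normalization argument delivers this uniform version with a quantitative choice of the contradiction sequence (take $\widetilde J_\lambda(u_n)<\rho_n^{p+1}$ with $\rho_n\to0$ and observe that the non-negative $q$-block forces $\|\nabla v_n\|_q\to0$, hence $\intO m_p v_{n,+}^p\to0$, hence the $p$-block eventually dominates), so this is a presentational gap rather than a mathematical one.
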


\begin{theorem}[\protect{\cite[Theorems 4, 5]{MT}}]\label{thm:MT-3}
Assume that for $(r,r')=(p,q)$ or $(q,p)$, 
\begin{equation}
\label{hyp:MT-3}
\pm \lambda=\lambda_1(r,\pm m_r)>\lambda_1(r',\pm m_{r'})
\end{equation}
and
\begin{equation}
\label{hypo2:MT-3}
\intO |\nabla \varphi_1(r,\pm m_r)|^{r'}\,dx
-\lambda\intO m_{r'} \varphi_1(r,\pm m_r)^{r'}\,dx >0,
\end{equation}
respectively. Then \eqref{eq:MT} has at least one positive
solution.
\end{theorem}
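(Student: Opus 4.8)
The plan is to obtain the positive solution as a nontrivial critical point of the truncated energy $\Phi_\lambda\in C^1(\W;\mathbb{R})$,
\[
\Phi_\lambda(u):=\frac1p\intO|\nabla u|^p\,dx+\frac1q\intO|\nabla u|^q\,dx-\frac{\lambda}{p}\intO m_p u_+^p\,dx-\frac{\lambda}{q}\intO m_q u_+^q\,dx .
\]
Indeed, any nontrivial critical point $u$ of $\Phi_\lambda$ is a positive solution of \eqref{eq:MT}: testing $\Phi_\lambda'(u)=0$ with $u_-$ yields $\|\nabla u_-\|_p^p+\|\nabla u_-\|_q^q=0$, hence $u\ge 0$ and $u$ solves \eqref{eq:MT}; the standard $L^\infty$-bound and $C^{1,\alpha}$-regularity for the $(p,q)$-Laplacian then apply, and since $u\in L^\infty(\Omega)$ one has $-\Delta_p u-\Delta_q u+c_0 u^{q-1}\ge 0$ for a suitable $c_0>0$, so the strong maximum principle gives $u>0$ in $\Omega$. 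Below I treat the ``$+$'' cases; the ``$-$'' ones follow verbatim after replacing $(m_p,m_q,\lambda)$ by $(-m_p,-m_q,-\lambda)$. Throughout I write $A_r(w):=\|\nabla w\|_r^r-\lambda\intO m_r w_+^r$ ($r=p,q$), so that $\Phi_\lambda=\tfrac1pA_p+\tfrac1qA_q$.

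Suppose first $(r,r')=(p,q)$, i.e.\ \eqref{hyp:MT-3} reads $\lambda=\lambda_1(p,m_p)>\lambda_1(q,m_q)$. I would minimise $\Phi_\lambda$ over $\W$. Weak lower semicontinuity is routine (convexity of the gradient terms and compactness of $\W\hookrightarrow L^p(\Omega),L^q(\Omega)$), and $\inf_\W\Phi_\lambda<0$: testing with $t\,\varphi_1(q,m_q)\in\C$ and using $\lambda>\lambda_1(q,m_q)$, the coefficient $\|\nabla\varphi_1(q,m_q)\|_q^q-\lambda\intO m_q\varphi_1(q,m_q)^q$ of $t^q$ is negative and, since $q<p$, dominates for small $t>0$. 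The crucial — and only delicate — point is \emph{coercivity}, and it is here that \eqref{hypo2:MT-3} is indispensable. The variational characterisation of $\lambda_1(p,m_p)=\lambda$ gives $A_p(w)\ge\|\nabla w_+\|_p^p-\lambda\intO m_p w_+^p\ge 0$. If $\|\nabla u_n\|_p\to\infty$, set $v_n:=u_n/\|\nabla u_n\|_p$; along a subsequence $v_n\rightharpoonup v$ in $\W$, $v_n\to v$ in $L^p(\Omega)\cap L^q(\Omega)$, and by homogeneity $\Phi_\lambda(u_n)=\tfrac1p\|\nabla u_n\|_p^p\,A_p(v_n)+\tfrac1q\|\nabla u_n\|_p^q\,A_q(v_n)$, with $A_q(v_n)$ bounded (as $\W\hookrightarrow W_0^{1,q}$). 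Passing to a further subsequence, $A_p(v_n)\to a:=1-\lambda\intO m_p v_+^p\ge 0$. If $a>0$, then $\Phi_\lambda(u_n)\ge\tfrac{a}{2p}\|\nabla u_n\|_p^p-C\|\nabla u_n\|_p^q\to+\infty$ since $p>q$. If $a=0$, then $\lambda\intO m_p v_+^p=1$, so $v_+\not\equiv 0$; combining $\|\nabla v_+\|_p^p\le\|\nabla v\|_p^p\le\liminf\|\nabla v_n\|_p^p=1$ with $\|\nabla v_+\|_p^p\ge\lambda\intO m_p v_+^p=1$ forces $\|\nabla v\|_p=\|\nabla v_+\|_p=1$, so $v=v_+$ attains the Rayleigh quotient, i.e.\ $v=k\,\varphi_1(p,m_p)$ with $k>0$, and moreover $v_n\to v$ \emph{strongly} in $\W$ (weak convergence together with convergence of norms in a uniformly convex space), hence strongly in $W_0^{1,q}$. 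Consequently $A_q(v_n)\to A_q(v)=k^q\bigl(\|\nabla\varphi_1(p,m_p)\|_q^q-\lambda\intO m_q\varphi_1(p,m_p)^q\bigr)>0$, where positivity is \emph{exactly} \eqref{hypo2:MT-3}, so $\Phi_\lambda(u_n)\ge\tfrac{1}{2q}A_q(v)\,\|\nabla u_n\|_p^q\to+\infty$. Thus $\Phi_\lambda$ is coercive, the direct method yields a minimiser $u_0$, and $\Phi_\lambda(u_0)<0$ makes it nontrivial, hence positive.

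Suppose instead $(r,r')=(q,p)$, i.e.\ $\lambda=\lambda_1(q,m_q)>\lambda_1(p,m_p)$. Now $A_q\ge 0$ on all of $\W$ but $A_p(\varphi_1(p,m_p))<0$, so $\Phi_\lambda$ is unbounded below along $\mathbb{R}_+\varphi_1(p,m_p)$ and a plain minimisation fails; here I would instead invoke the mountain pass theorem (equivalently, minimise over the Nehari manifold, on which $\Phi_\lambda=\tfrac{p-q}{pq}A_q$ is $\ge 0$, with equality ruled out precisely by \eqref{hypo2:MT-3}). The required geometry is again furnished by \eqref{hypo2:MT-3}: along $\mathbb{R}_+\varphi_1(q,m_q)$ one has $A_q=0$ and $A_p(\varphi_1(q,m_q))=\|\nabla\varphi_1(q,m_q)\|_p^p-\lambda\intO m_p\varphi_1(q,m_q)^p>0$, so $\Phi_\lambda\to+\infty$ there, which together with a point $e=T\varphi_1(p,m_p)$ at which $\Phi_\lambda(e)<0$ produces a strictly positive min–max level; the Palais–Smale condition is checked by the same blow-down/strong-convergence analysis as above, once more relying on \eqref{hypo2:MT-3} in the degenerate direction, and the resulting nontrivial critical point is positive. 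In both cases the genuine obstacle is this degenerate situation, where the blow-down (resp.\ Palais–Smale) sequence concentrates along the first eigenfunction of the ``resonant'' operator: one must upgrade its weak convergence to \emph{strong} convergence in $\W$ in order to transmit the lower-order quantity of \eqref{hypo2:MT-3} to the limit, and without \eqref{hypo2:MT-3} that quantity may vanish or become negative, so that coercivity (resp.\ compactness) genuinely fails and the hypothesis cannot be dropped.
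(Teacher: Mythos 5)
This theorem is quoted verbatim from \cite[Theorems 4, 5]{MT}: the present paper states it as a known ingredient and contains no proof of it, so there is no internal argument to compare yours against. Judged on its own terms, your treatment of the case $(r,r')=(p,q)$ is correct and essentially complete: $A_p\ge 0$ follows from the variational characterization of $\lambda_1(p,m_p)$, the functional is negative along $t\,\varphi_1(q,m_q)$ for small $t>0$ because $q<p$, and the coercivity dichotomy ($a>0$ versus $a=0$, with the upgrade to strong convergence of the normalized sequence to $k\,\varphi_1(p,m_p)$ so that \eqref{hypo2:MT-3} forces $A_q>0$ in the limit) is exactly the right use of the second hypothesis. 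This is the global-minimization strategy one expects for \cite[Theorem 4]{MT}.

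The case $(r,r')=(q,p)$ is where your sketch has a genuine gap. You justify the mountain-pass geometry by observing that $\Phi_\lambda\to+\infty$ along the single ray $\mathbb{R}_+\varphi_1(q,m_q)$; but the mountain pass theorem requires $\inf\{\Phi_\lambda(u):\|\nabla u\|_p=\rho\}\ge\delta>0$ on an entire sphere separating $0$ from the point $e$ with $\Phi_\lambda(e)<0$, and positivity along one ray does not give this. Since $A_q$ vanishes on the sphere exactly on that ray while $A_p$ is negative in other directions (e.g.\ along $\varphi_1(p,m_p)$), one must prove that $A_q(w)$ cannot be small at points $w$ of the sphere where $A_p(w)\le 0$; this is a compactness argument of the same flavour as your $a=0$ analysis, but with an additional degenerate branch to exclude: a sequence $w_n$ with $\|\nabla w_n\|_p=1$ and $A_q(w_n)\to 0$ may concentrate its gradient, so that $\|\nabla w_n\|_q\to 0$ and the weak limit is $0$ rather than a multiple of $\varphi_1(q,m_q)$; there the contradiction comes instead from $A_p(w_n)\to 1>0$, and this branch must actually be run. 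The same issue reappears in the Palais--Smale verification, which you also defer to ``the same analysis as above''. Your overall strategy (mountain pass, or minimization on the Nehari manifold where $\Phi_\lambda=\frac{p-q}{pq}A_q\ge 0$) and the role you assign to \eqref{hypo2:MT-3} are correct, but these two verifications are precisely the substance of \cite[Theorem 5]{MT} and cannot be waved through.
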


Notice that we set $\lambda_1(r, -m_r) = +\infty$, provided $m_r \geq 0$ in $\Omega$.

Let us mention that in \cite{MT}, there is {\it no} information about the case when 
$\lambda$ is beyond the first eigenvalues $\lambda_1(p,m_p)$ and 
$\lambda_1(q,m_q)$, i.e. $\lambda > \max\{\lambda_1(p,m_p), \lambda_1(q,m_q)\}$.
In the next section we provide corresponding results on existence and non-existence in this case.

\section{Main results} 

First, we state our results for the case $(\alpha, \beta) \in \mathbb{R}^2 \setminus [\lambda_1(p), +\infty) \times [\lambda_1(q), +\infty)$  (see Fig.~1).
These results generalize Theorems 1 and 2 from \cite{MT} for the problem $(GEV;\alpha,\beta)$ with non-negative weights.

\begin{proposition}\label{prop:non-exist} 
If it holds 
$$
(\alpha,\beta)\in (-\infty,\lambda_1(p)]\times(-\infty,\lambda_1(q)]\setminus 
\{(\lambda_1(p),\lambda_1(q))\}, 
$$
then $(GEV;\alpha,\beta)$ has no non-trivial solutions. 

Moreover, 
$(GEV;\alpha,\beta)$ with $\alpha=\lambda_1(p)$ and $\beta=\lambda_1(q)$ 
has a non-trivial (positive) solution 
if and only if they have the same eigenspace, namely, 
there exists $k \neq 0$ such that $\varphi_p \equiv k \varphi_q$ in $\Omega$ (that is, {\bf (LI)} is not satisfied).
\end{proposition}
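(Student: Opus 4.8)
The plan is to split the statement into two parts: the non-existence claim on the quarter-plane minus the corner, and the characterization at the corner point $(\lambda_1(p),\lambda_1(q))$.

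For the non-existence part, I would argue by contradiction: suppose $u \in W_0^{1,p}$ is a non-trivial solution of $(GEV;\alpha,\beta)$ with $\alpha \le \lambda_1(p)$, $\beta \le \lambda_1(q)$, and $(\alpha,\beta) \neq (\lambda_1(p),\lambda_1(q))$. Testing the weak formulation with $\varphi = u$ gives
\begin{equation*}
\intO |\nabla u|^p\,dx + \intO |\nabla u|^q\,dx = \alpha \intO |u|^p\,dx + \beta \intO |u|^q\,dx.
\end{equation*}
On the other hand, the variational characterization of $\lambda_1(p)$ and $\lambda_1(q)$ (with weight $\equiv 1$) yields $\intO |\nabla u|^p\,dx \ge \lambda_1(p)\intO |u|^p\,dx$ and $\intO |\nabla u|^q\,dx \ge \lambda_1(q)\intO |u|^q\,dx$. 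Adding these and comparing with the tested identity forces $(\alpha - \lambda_1(p))\intO|u|^p\,dx + (\beta - \lambda_1(q))\intO|u|^q\,dx \ge 0$; since both coefficients are $\le 0$ and $u \not\equiv 0$, this is impossible unless $\alpha = \lambda_1(p)$ \emph{and} $\beta = \lambda_1(q)$, contradicting the hypothesis. Here I should take care that $u \in W_0^{1,p} \subset W_0^{1,q}$ (valid since $q < p$ on a bounded domain), so both Rayleigh inequalities apply to the same $u$.

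For the corner case, when $\alpha = \lambda_1(p)$ and $\beta = \lambda_1(q)$, the inequality chain above becomes a chain of equalities: any non-trivial solution $u$ must simultaneously attain equality in the Rayleigh quotient for $\lambda_1(p)$ and for $\lambda_1(q)$. By simplicity of the first eigenvalue for each of $-\Delta_p$ and $-\Delta_q$ (stated in the excerpt), this means $u = c_1 \varphi_p$ and $u = c_2 \varphi_q$ for some nonzero constants, hence $\varphi_p \equiv k \varphi_q$ for some $k \neq 0$, i.e. {\bf (LI)} fails. Conversely, if $\varphi_p \equiv k \varphi_q$, then taking $u := \varphi_p$ (normalized, say, positive) and using $-\Delta_p \varphi_p = \lambda_1(p)|\varphi_p|^{p-2}\varphi_p$ together with $-\Delta_q \varphi_q = \lambda_1(q)|\varphi_q|^{q-2}\varphi_q$ — the latter rescaled through $u = k\varphi_q$, noting that $-\Delta_q(k\varphi_q) = |k|^{q-2}k\,(-\Delta_q\varphi_q) = |k|^{q-2}k\,\lambda_1(q)|\varphi_q|^{q-2}\varphi_q = \lambda_1(q)|k\varphi_q|^{q-2}(k\varphi_q) = \lambda_1(q)|u|^{q-2}u$ — we get that $u$ solves $-\Delta_p u - \Delta_q u = \lambda_1(p)|u|^{p-2}u + \lambda_1(q)|u|^{q-2}u$, which is exactly $(GEV;\lambda_1(p),\lambda_1(q))$; and $u = \varphi_p > 0$ after choosing the positive eigenfunction.

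The main obstacle I anticipate is the clean deduction, in the corner case, that equality in both Rayleigh quotients forces $u$ to be (a multiple of) \emph{both} $\varphi_p$ and $\varphi_q$: this relies on the fact that the infimum defining $\lambda_1(r)$ is attained \emph{only} at scalar multiples of $\varphi_r$, which is precisely the simplicity statement quoted from \cite{anane1987}. One should also be slightly careful about sign/regularity bookkeeping — a non-trivial solution at the corner is first shown to be a scalar multiple of $\varphi_p$, hence of one sign, hence positive up to sign, and $C^{1,\alpha}_0(\overline{\Omega})$ by the regularity already recorded in the excerpt — but these are routine once the equality-case rigidity is in hand.
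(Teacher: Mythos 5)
Your proposal is correct and follows essentially the same route as the paper: test the weak formulation with $u$ itself, apply the Rayleigh/Poincar\'e inequalities for $\lambda_1(p)$ and $\lambda_1(q)$, and observe that the resulting chain of inequalities forces $\alpha=\lambda_1(p)$, $\beta=\lambda_1(q)$ and simultaneous equality in both Rayleigh quotients, whence $u$ is a common first eigenfunction by simplicity. The only differences are cosmetic (you add the two Rayleigh inequalities where the paper arranges them into one chain, and you write out the homogeneity check for the converse at the corner, which the paper leaves implicit).
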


\begin{proposition}\label{prop:exist-1} 
If it holds 
$$
(\alpha,\beta)\in (\lambda_1(p),+\infty)\times(-\infty,\lambda_1(q)) 
\cup (-\infty,\lambda_1(p))\times(\lambda_1(q),+\infty), 
$$
then $(GEV;\alpha,\beta)$ has at least one positive solution. 
\end{proposition}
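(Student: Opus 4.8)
\textbf{Proof proposal for Proposition~\ref{prop:exist-1}.}

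The plan is to treat the two symmetric cases together by assuming, say, $\alpha > \lambda_1(p)$ and $\beta < \lambda_1(q)$; the other case follows by swapping the roles of $p$ and $q$. I would obtain the positive solution variationally, as a global minimizer of the energy functional
\begin{equation*}
\E(u) = \frac{1}{p}\intO |\nabla u|^p\,dx + \frac{1}{q}\intO |\nabla u|^q\,dx - \frac{\alpha}{p}\intO |u|^p\,dx - \frac{\beta}{q}\intO |u|^q\,dx
\end{equation*}
on $\W$. First I would check that $\E$ is coercive and weakly lower semicontinuous on $\W$: coercivity is the crux here. Since $\beta < \lambda_1(q)$, the $q$-part $\frac1q\|\nabla u\|_q^q - \frac{\beta}{q}\|u\|_q^q$ is bounded below (by $0$ if $\beta \le 0$, and by $\frac1q(1-\beta/\lambda_1(q))\|\nabla u\|_q^q \ge 0$ if $0<\beta<\lambda_1(q)$), so it can be discarded for the lower bound. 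The $p$-part $\frac1p\|\nabla u\|_p^p - \frac{\alpha}{p}\|u\|_p^p$ may be very negative for large $\alpha$, but one controls it by interpolation: $\|u\|_p^p$ is dominated by $\|\nabla u\|_q^q$ up to lower-order terms via the Sobolev/Poincar\'e embeddings and Young's inequality with the exponent pair matching $q<p$, so that for any $\varepsilon>0$ one gets $\frac{\alpha}{p}\|u\|_p^p \le \varepsilon \|\nabla u\|_p^p + C_\varepsilon$. Hence $\E(u) \ge (\tfrac1p - \varepsilon)\|\nabla u\|_p^p - C_\varepsilon \to +\infty$. Weak lower semicontinuity is standard since the gradient terms are convex and continuous while the lower-order terms are weakly continuous by compact embedding.

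Having coercivity and weak lower semicontinuity, the direct method yields a global minimizer $u_0 \in \W$ with $\E(u_0) = \min_{\W}\E$. The second step is to show $u_0 \not\equiv 0$, equivalently $\min \E < 0$. Here I would use the first eigenfunction $\varphi_p$ of $-\Delta_p$ as a test direction: for $t>0$ small,
\begin{equation*}
\E(t\varphi_p) = \frac{t^p}{p}\bigl(\lambda_1(p)-\alpha\bigr)\|\varphi_p\|_p^p + \frac{t^q}{q}\bigl(\|\nabla \varphi_p\|_q^q - \beta\|\varphi_p\|_q^q\bigr),
\end{equation*}
using $\|\nabla\varphi_p\|_p^p = \lambda_1(p)\|\varphi_p\|_p^p$. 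Since $\alpha > \lambda_1(p)$ the coefficient of $t^p$ is strictly negative, and since $q<p$ the $t^q$-term is dominated by the $t^p$-term as $t\to 0^+$; thus $\E(t\varphi_p) < 0$ for $t$ small enough (the $t^q$ coefficient may have either sign, but this does not matter — we only need negativity for small $t$, where the lower power wins... wait, that is the wrong direction). More carefully: as $t\to0^+$ the $t^q$ term dominates, so I need its coefficient $\|\nabla\varphi_p\|_q^q - \beta\|\varphi_p\|_q^q$ to be $\le 0$ too, or I need a different argument. If $\beta < \lambda_1(q) \le \|\nabla\varphi_p\|_q^q/\|\varphi_p\|_q^q$ then this coefficient is positive, and small $t$ gives positive energy. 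The fix is to instead test with $t\varphi_p$ for $t$ \emph{large}: then the $t^p$ term with its negative coefficient dominates, giving $\E(t\varphi_p)\to-\infty$, hence in particular $\E(t\varphi_p)<0$ for some $t$. So $\min_{\W}\E \le \E(t\varphi_p) < 0$, forcing $u_0 \neq 0$.

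The final step is to upgrade $u_0$ to a positive solution. Since $\E(|u_0|) = \E(u_0)$ (all terms depend only on $|\nabla u|$ and $|u|$), we may assume $u_0 \ge 0$. As a critical point of $\E$, $u_0$ is a weak solution of $(GEV;\alpha,\beta)$; then $u_0 \in L^\infty(\Omega)$ by Moser iteration, $u_0 \in C^{1,\gamma}_0(\overline\Omega)$ by the nonlinear regularity theory of Lieberman/DiBenedetto, and $u_0 > 0$ in $\Omega$ by the strong maximum principle / Harnack inequality of V\'azquez (applicable since the right-hand side $\alpha u_0^{p-1} + \beta u_0^{q-1} \ge -C u_0^{q-1}$ satisfies the required sign condition near $u_0=0$). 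I expect the coercivity estimate in Step~1 to be the main technical obstacle, since it requires carefully absorbing the possibly-large $\alpha\|u\|_p^p$ term into $\|\nabla u\|_p^p$ using the interpolation between the $p$- and $q$-norms; once coercivity is in hand, the rest is routine variational and regularity machinery.
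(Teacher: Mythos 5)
There is a genuine gap, and it sits exactly where you predicted the main difficulty would be: the coercivity claim in Step~1 is \emph{false} in the case you chose to present ($\alpha>\lambda_1(p)$, $\beta<\lambda_1(q)$). The inequality $\frac{\alpha}{p}\|u\|_p^p\le\varepsilon\|\nabla u\|_p^p+C_\varepsilon$ cannot hold for small $\varepsilon>0$: testing it on $u=t\varphi_p$ and letting $t\to\infty$ forces $\frac{\alpha}{p}\le\varepsilon\lambda_1(p)$, because the Poincar\'e ratio $\|u\|_p^p/\|\nabla u\|_p^p=1/\lambda_1(p)$ is attained along $\varphi_p$, and no interpolation through the $q$-norms can rescue this since $\|\nabla u\|_q^q$ grows only like $t^q$ with $q<p$. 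Indeed, your own Step~2 proves the opposite of coercivity: you correctly observe that $\E(t\varphi_p)\to-\infty$ as $t\to\infty$, so $\E$ is unbounded below and has \emph{no} global minimizer; the proposal is internally inconsistent at this point. The correct variational geometry here is of mountain-pass type: near the origin the term $\frac{1}{q}\left(\|\nabla u\|_q^q-\beta\|u\|_q^q\right)\ge\frac{1}{q}\left(1-\max\{\beta,0\}/\lambda_1(q)\right)\|\nabla u\|_q^q$ dominates (this is where $\beta<\lambda_1(q)$ enters) and yields a positive ridge on a small sphere $\|u\|_q=\rho$, while $\E(t\varphi_p)\to-\infty$ supplies the far endpoint. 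This is exactly how the paper proceeds: Case (iii) ($\beta\le 0$) directly via the mountain pass theorem, and Case (i) ($0<\beta<\lambda_1(q)$) by rescaling to an application of Theorem~\ref{thm:MT-2}, whose proof likewise rests on a mountain-pass construction. A mountain-pass argument also requires verifying the Palais--Smale condition, which your proposal does not address at all.

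The reduction of the second component of the parameter set to the first by ``swapping $p$ and $q$'' is also not available: since $q<p$ is fixed, the two components $(\lambda_1(p),+\infty)\times(-\infty,\lambda_1(q))$ and $(-\infty,\lambda_1(p))\times(\lambda_1(q),+\infty)$ are \emph{not} symmetric, because which power dominates near $0$ and at infinity depends on the ordering of the exponents. Global minimization is the right tool only for the latter component $\alpha<\lambda_1(p)$, $\beta>\lambda_1(q)$, where one does have $\E(u)\ge \frac{1}{p}\left(1-\max\{\alpha,0\}/\lambda_1(p)\right)\|\nabla u\|_p^p-\frac{\beta}{q}\|u\|_q^q\ge c\|\nabla u\|_p^p-C\|\nabla u\|_p^q\to+\infty$ (the paper's Case (iv)), with negativity of the infimum obtained by testing with $t\varphi_q$ for \emph{small} $t$, where now the $t^q$-coefficient $\left(\lambda_1(q)-\beta\right)\|\varphi_q\|_q^q$ is negative and dominates. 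Your Step~3 (the upgrade of a nonnegative nontrivial critical point to a solution in ${\rm int}\,\C_+$) is fine and matches Remark~\ref{rem:positive}, but the core existence mechanism for the case you detailed must be replaced.
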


\smallskip
The main novelty of the work is the treatment of the rest part of $(\alpha, \beta)$ plane, i.e.
$(\alpha, \beta) \in [\lambda_1(p), +\infty) \times [\lambda_1(q), +\infty)$, where
we construct a threshold curve, which separates the regions of existence and non-existence of positive solutions for $(GEV; \alpha,\beta)$.

Note first that for any $\alpha, \beta \in \mathbb{R}$
the problem $(GEV; \alpha,\beta)$ is equivalent to $(GEV; \beta + s,\beta)$, where $s = \alpha-\beta$. Denoting now, for convenience, $\lambda = \beta$, for each $s \in \mathbb{R}$ we consider
\begin{equation}
\label{def:lambda_1}
\lambda^*(s) :=\sup\{\lambda\in\mathbb{R}\, :\, 
(GEV;\lambda+s,\lambda)\ {\rm has\ a\ positive\ solution}\,\},
\end{equation} 
provided $(GEV;\lambda+s,\lambda)$ 
has a positive solution for some $\lambda$. 
If there are no such $\lambda$, we set 
$\lambda^*(s)=-\infty$. Define also
\begin{equation*}
s^*:= \lambda_1(p)-\lambda_1(q) \quad \text{and} \quad 
s^*_+ := \frac{\|\nabla \varphi_q\|^p_p}{\|\varphi_q\|^p_p} - \lambda_1(q).
\end{equation*} 
Obviously, $s^* \leq s^*_+$, and $s^* = s^*_+$ if and only if {\bf (LI)} is satisfied.

\begin{figure}[h!]
\center{\includegraphics[scale=1]{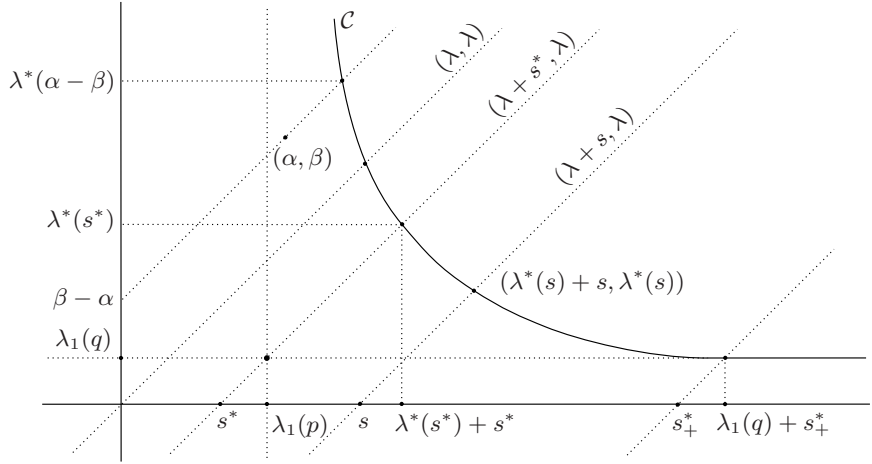}} \\
\caption{$(\alpha, \beta)$ plane. Construction of the curve $\mathcal{C}$}
\end{figure}

In the next proposition we collect the main facts about $\lambda^*(s)$:
\begin{proposition}
\label{prop:prop}
Let $\lambda^*(s)$ be defined by \eqref{def:lambda_1} for $s \in \mathbb{R}$. Then the following assertions hold:

\begin{itemize} 
\item[{\rm (i)}] 
$\lambda^*(s) < +\infty$ for all $s \in \mathbb{R}$;
\item[{\rm (ii)}] 
$\lambda^*(s) + s \geq \lambda_1(p)$ and $\lambda^*(s) \geq \lambda_1(q)$ for all $s \in \mathbb{R}$;
\item[{\rm (iii)}] 
$\lambda^*(s) = \lambda_1(q)$ for all $s \geq s^*_+$;
\item[{\rm (iv)}] 
$\lambda^*(s^*) + s^* > \lambda_1(p)$ and $\lambda^*(s^*) > \lambda_1(q)$ if and only if {\bf (LI)} is satisfied;
\item[{\rm (v)}] 
$\lambda^*(s)$ is continuous on $\mathbb{R}$;
\item[{\rm (vi)}] 
$\lambda^*(s)$ is non-increasing and $\lambda^*(s) + s$ is non-decreasing on $\mathbb{R}$.
\end{itemize} 
\end{proposition}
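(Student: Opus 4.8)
The plan is to dispose of the soft structural facts first and isolate the two places where genuine analysis enters. Note at once that (v) is free from (vi): if $\lambda^*$ is non-increasing while $s\mapsto\lambda^*(s)+s$ is non-decreasing, then $-(s_2-s_1)\le\lambda^*(s_2)-\lambda^*(s_1)\le 0$ for $s_1<s_2$, so $\lambda^*$ is $1$-Lipschitz and in particular continuous. It remains to establish (i), (ii), (iii), (iv), (vi), which I would carry out in the order (ii) (for $s\ne s^*$), (vi), (iii), (i), (iv), (ii) (at $s=s^*$).

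\emph{Items (ii) for $s\ne s^*$, and (vi).} If $s>s^*=\lambda_1(p)-\lambda_1(q)$ then $\lambda_1(p)-s<\lambda_1(q)$, so Proposition~\ref{prop:exist-1} gives a positive solution of $(GEV;\lambda+s,\lambda)$ for every $\lambda\in(\lambda_1(p)-s,\lambda_1(q))$; hence $\lambda^*(s)\ge\lambda_1(q)$ and $\lambda^*(s)+s\ge\lambda_1(q)+s>\lambda_1(p)$. The case $s<s^*$ is symmetric, using $(\lambda_1(q),\lambda_1(p)-s)$. For (vi) I would use the sub- and super-solution method: a positive solution of $(GEV;\alpha,\beta)$ is a super-solution of $(GEV;\alpha',\beta')$ for any $\alpha'\le\alpha$, $\beta'\le\beta$ (the right-hand side can only decrease), and when $\beta'>\lambda_1(q)$ the function $t\varphi_q$ with $t>0$ small is a sub-solution lying below it (using $\partial_\nu u<0$ on $\partial\Omega$). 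Thus the solvability set $S(s)=\{\lambda:(GEV;\lambda+s,\lambda)$ has a positive solution$\}$ contains $(\lambda_1(q),\lambda_0]$ whenever $\lambda_0\in S(s)$ and $\lambda_0>\lambda_1(q)$; running this comparison as $(\alpha,\beta)$ moves in each coordinate direction, and invoking Propositions~\ref{prop:non-exist}--\ref{prop:exist-1} and the bounds of (ii) in the ranges $\beta<\lambda_1(q)$ or $\alpha<\lambda_1(p)$, yields that $\lambda^*$ is non-increasing and $\lambda^*+s$ non-decreasing.

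\emph{Items (iii) and (i) — non-existence for large $\lambda$.} Let $u>0$ solve $(GEV;\lambda+s,\lambda)$; by regularity theory and the strong maximum principle, $u\in C^1_0(\overline\Omega)$ with $u>0$ in $\Omega$ and $\partial_\nu u<0$ on $\partial\Omega$, so $\varphi_q^p/u^{p-1}$ and $\varphi_q^q/u^{q-1}$ are admissible test functions. For (iii), testing with $\varphi_q^p/u^{p-1}$, applying Picone's inequality to the $p$-Laplacian term and a Picone--Young estimate to the residual $q$-gradient term (absorbing $|\nabla u|^q\varphi_q^p/u^p$ and using the Hopf bound on $\varphi_q/u$ near $\partial\Omega$), one gets $(\lambda+s)\|\varphi_q\|_p^p\le\|\nabla\varphi_q\|_p^p+(\text{absorbable terms})$, that is $\lambda+s\le s^*_++\lambda_1(q)$, contradicting $\lambda>\lambda_1(q)$ and $s\ge s^*_+$; with the lower bound $\lambda^*(s)\ge\lambda_1(q)$ (from (ii) for $s>s^*$, and from Proposition~\ref{prop:non-exist} at $s=s^*$ when $s^*_+=s^*$) this is (iii). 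For (i), the same computation with $\varphi_q^q/u^{q-1}$ gives $(\lambda-\lambda_1(q))\|\varphi_q\|_q^q\le C$ with $C$ controlled by $\|u\|_{C^1}$; on any hypothetical sequence of positive solutions with $\lambda\to+\infty$, a priori bounds (testing with $u$, the $(S_+)$-property of $-\Delta_p$, elliptic regularity) keep $\|u\|_{C^1}$ — hence $C$ — bounded, while such solutions can neither converge to $0$ (normalizing pins $\lambda$ near an eigenvalue of $-\Delta_q$) nor to a nonzero limit (dividing the equation by $\lambda$ forces the limit to vanish); so $\lambda\to+\infty$ is impossible, and $\lambda^*(s)<+\infty$.

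\emph{Items (iv) and (ii) at $s=s^*$.} If {\bf (LI)} fails, say $\varphi_p\equiv k\varphi_q$, then $\|\nabla\varphi_q\|_p^p/\|\varphi_q\|_p^p=\lambda_1(p)$, so $s^*_+=s^*$ and (iii) applies at $s=s^*=s^*_+$, giving $\lambda^*(s^*)=\lambda_1(q)$; since $s^*=\lambda_1(p)-\lambda_1(q)$ this means $\lambda^*(s^*)+s^*=\lambda_1(p)$, so neither strict inequality of (iv) holds. Conversely, if {\bf (LI)} holds then $\|\nabla\varphi_q\|_p^p/\|\varphi_q\|_p^p>\lambda_1(p)$, i.e. $s^*_+>s^*$, and I would use this slack to produce a positive solution of $(GEV;\alpha,\beta)$ with $\beta$ slightly above $\lambda_1(q)$ and $\alpha=\beta+s^*$ slightly above $\lambda_1(p)$, which gives $\lambda^*(s^*)\ge\beta>\lambda_1(q)$. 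Along $t\varphi_q$ the energy functional $E_{\alpha,\beta}$ is negative for $t>0$ small (the lower-order $q$-term has coefficient $\lambda_1(q)-\beta<0$) while $E_{\alpha,\beta}(t\varphi_q)\to+\infty$ as $t\to\infty$ (the $p$-coefficient $\|\nabla\varphi_q\|_p^p-\alpha\|\varphi_q\|_p^p=\|\varphi_q\|_p^p\bigl(s^*_+-s^*-(\alpha-\lambda_1(p))\bigr)$ stays positive for $\alpha$ near $\lambda_1(p)$); this — through the mountain-pass theorem, or minimization over the Nehari manifold, together with the extended functional used to handle the non-coercive regime, and with Palais--Smale compactness coming from the subcritical growth — yields a nontrivial critical point, positive by the strong maximum principle. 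Finally, the borderline $s=s^*$ of (ii): if {\bf (LI)} fails, Proposition~\ref{prop:non-exist} already gives a positive solution at $(\lambda_1(p),\lambda_1(q))$, so $\lambda^*(s^*)\ge\lambda_1(q)=\lambda_1(p)-s^*$; if {\bf (LI)} holds, letting $\alpha\downarrow\lambda_1(p)$, $\beta\downarrow\lambda_1(q)$ along $\alpha-\beta=s^*$ and passing to the limit in the solutions just constructed (using the a priori bounds of (i) and a non-degeneracy check) gives the same. The main obstacle is exactly this existence result beyond the corner $(\lambda_1(p),\lambda_1(q))$ — setting up the variational geometry and the compactness of the possibly non-coercive functional — along with the a priori estimates behind (i).
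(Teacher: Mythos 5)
Your overall architecture matches the paper's for (ii), (v) and (vi): continuity from the two monotonicities, (ii) for $s\neq s^*$ from Proposition~\ref{prop:exist-1}, and (vi) via the observation that a positive solution of $(GEV;\mu+s',\mu)$ is a positive super-solution of the problem with smaller coefficients, combined with the sub-/super-solution existence lemma for $\beta>\lambda_1(q)$. However, there are three places where your sketch has genuine gaps relative to what is actually needed.

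First, Part (i). Your route — testing with $\varphi_q^q/u^{q-1}$ to get $(\lambda-\lambda_1(q))\|\varphi_q\|_q^q\le C(\|u\|_{C^1})$ and then claiming uniform a priori $C^1$ bounds along a sequence with $\lambda\to+\infty$ — does not close. The uniform bound is precisely the hard point: nothing in "testing with $u$, the $(S_+)$-property, elliptic regularity" prevents $\|\nabla u\|_p$ from blowing up as $\lambda\to\infty$, and your dichotomy (the sequence can converge neither to $0$ nor to a nonzero limit) presupposes the compactness it is meant to establish. The paper avoids all of this with a single test function $\xi=\varphi^p/(u^{p-1}+u^{q-1})$ for a fixed $\varphi\in{\rm int}\,C^1_0(\overline\Omega)_+$, together with a Picone-type inequality tailored to the sum $-\Delta_p-\Delta_q$ (Proposition~\ref{picone}): both gradient terms are then bounded by $\rho^{-1}(\|\nabla\varphi\|_p^p+\|\nabla(\varphi^{p/q})\|_q^q)$ with $\rho$ depending only on $p,q$, so $\lambda$ is bounded above by a constant \emph{independent of the solution} $u$. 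No a priori estimate on $u$ is required. This idea is absent from your proposal and is the essential content of (i).

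Second, Part (iii). To conclude $\lambda+s\le s^*_++\lambda_1(q)$ you need the $q$-Laplacian term, tested against $\varphi_q^p/u^{p-1}$, to be bounded above by exactly $\lambda_1(q)\int_\Omega\varphi_q^pu^{q-p}\,dx$, so that it cancels against the $\lambda\int_\Omega\varphi_q^pu^{q-p}\,dx$ on the left using $\lambda>\lambda_1(q)$. This sharp constant comes from the generalized Picone inequality $|\nabla u|^{q-2}\nabla u\nabla(\varphi^p/u^{p-1})\le|\nabla\varphi|^{q-2}\nabla\varphi\nabla(\varphi^{p-q+1}/u^{p-q})$ of Il'yasov, applied with $\varphi=\varphi_q$ an eigenfunction of $-\Delta_q$. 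Your proposed "Picone--Young estimate, absorbing $|\nabla u|^q\varphi_q^p/u^p$ via a Hopf bound on $\varphi_q/u$" introduces a constant depending on $u$ and cannot produce the exact threshold $s^*_+$. Third, Part (iv) under {\bf (LI)}: you correctly identify the Nehari-manifold geometry ($H_{\lambda_1(p)+\varepsilon}(\varphi_q)>0>G_{\lambda_1(q)+\varepsilon}(\varphi_q)$, which needs $s^*_+>s^*$, i.e. {\bf (LI)}), but the crux is the boundedness of the minimizing sequence on $\mathcal{N}_{\lambda_1(p)+\varepsilon,\lambda_1(q)+\varepsilon}$ uniformly in small $\varepsilon$: the paper shows that an unbounded sequence would force a normalized limit $v^*$ with $H_{\lambda_1(p)}(v^*)=G_{\lambda_1(q)}(v^*)=0$, hence $|v^*|$ proportional to both $\varphi_p$ and $\varphi_q$, contradicting {\bf (LI)}. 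You flag this as "the main obstacle" but do not supply the argument, so the existence beyond the corner — and with it the forward implication of (iv) and the case $s=s^*$ of (ii) — remains unproved in your proposal. (Your limiting argument $\alpha\downarrow\lambda_1(p)$ for (ii) at $s=s^*$ is also superfluous once (iv) is established, since $\lambda^*(s^*)>\lambda_1(q)$ already gives the bound.)
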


Notice that it is still unknown if there is $s^*_- \in \mathbb{R}$, such that
$\lambda^*(s) + s = \lambda_1(p)$ for all $s \leq s^*_-$, or $\lambda^*(s) + s > \lambda_1(p)$ for all $s \in \mathbb{R}$, whenever {\bf (LI)} is satisfied.

\smallskip
Now we define the curve $\mathcal{C}$ in $(\alpha, \beta)$ plane as follows: 
$$
\mathcal{C}
:=\left\{(\lambda^*(s)+s,\lambda^*(s)), ~s \in \mathbb{R}\right\}.
$$
From Proposition \ref{prop:prop} there directly follow the corresponding conclusions for $\mathcal{C}$, namely, $\mathcal{C}$ is locally finite, $\mathcal{C} \subset [\lambda_1(p),+\infty)\times[\lambda_1(q),+\infty)$, $\mathcal{C}$ is continuous, monotone, and coincides with $[\lambda_1(q)+s^*_+, +\infty) \times \{\lambda_1(q)\}$ for $s \geq s^*_+$ (see Fig.~2).

We especially note that $\lambda^*(s) + s = \lambda_1(p)$ for $s \leq s^*$ 
and $\lambda^*(s) = \lambda_1(q)$ for $s \geq s^*$ if and only if {\bf (LI)} doesn't hold. It directly follows from the combination of the criterion (iv), estimations (ii) and monotonicity (vi) from Proposition \ref{prop:prop}. In other words, our curve $\mathcal{C}$ coincides with the polygonal line 
$\{\lambda_1(p)\}\times[\lambda_1(q),\infty)\cup [\lambda_1(q),\infty)\times\{\lambda_1(p)\}$ if and only if {\bf (LI)} is violated.

This fact allows us to prove a \textit{complete} description of the spectrum for the problem $(GEV;\alpha,\beta)$ when $\lambda_1(p)$ and $\lambda_1(q)$ 
have the same eigenspace (see Fig.~1 b)).

\begin{theorem}
\label{thm:main2} 
Assume that {\bf (LI)} does not hold. Then $(GEV;\alpha,\beta)$ has at least one positive solution if and only if 
\begin{equation}
\label{eq:condition}
(\alpha,\beta)\in (\lambda_1(p),+\infty)\times(-\infty,\lambda_1(q)) 
\cup (-\infty,\lambda_1(p))\times(\lambda_1(q),+\infty) 
\cup \{(\lambda_1(p),\lambda_1(q))\}.
\end{equation}
\end{theorem}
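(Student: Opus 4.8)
The plan is to assemble Propositions~\ref{prop:non-exist}, \ref{prop:exist-1} and \ref{prop:prop} with the observation, recorded right after Proposition~\ref{prop:prop}, that under the standing hypothesis (namely {\bf (LI)} violated) one has $\lambda^*(s)+s=\lambda_1(p)$ for $s\le s^*$ and $\lambda^*(s)=\lambda_1(q)$ for $s\ge s^*$, with $s^*=s^*_+=\lambda_1(p)-\lambda_1(q)$. For the sufficiency direction this is immediate: if $(\alpha,\beta)$ lies in one of the two open quadrants appearing in \eqref{eq:condition}, a positive solution is produced by Proposition~\ref{prop:exist-1}, while if $(\alpha,\beta)=(\lambda_1(p),\lambda_1(q))$ the ``moreover'' part of Proposition~\ref{prop:non-exist}, invoked precisely because {\bf (LI)} fails, gives one (concretely, a suitably rescaled common first eigenfunction $\varphi_p\equiv k\varphi_q$ solves $(GEV;\lambda_1(p),\lambda_1(q))$).

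For the necessity direction I would argue by contraposition, splitting the complement of the set \eqref{eq:condition} (the corner already lying in it) into three pieces: (a) the closed quadrant $(-\infty,\lambda_1(p)]\times(-\infty,\lambda_1(q)]$ with the corner removed; (b) the open quadrant $(\lambda_1(p),+\infty)\times(\lambda_1(q),+\infty)$; (c) the two half-lines $\{\lambda_1(p)\}\times(\lambda_1(q),+\infty)$ and $(\lambda_1(p),+\infty)\times\{\lambda_1(q)\}$. Piece (a) is exactly Proposition~\ref{prop:non-exist}. For piece (b), set $s=\alpha-\beta$, $\lambda=\beta$; the explicit form of $\lambda^*$ above yields $\lambda>\lambda^*(s)$ in every subcase --- if $s\le s^*$ then $\lambda+s=\alpha>\lambda_1(p)=\lambda^*(s)+s$, and if $s\ge s^*$ then $\lambda=\beta>\lambda_1(q)=\lambda^*(s)$ --- so by the very definition \eqref{def:lambda_1} of $\lambda^*(s)$ as a supremum, $(GEV;\lambda+s,\lambda)$ has no positive solution, a contradiction.

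Piece (c) is the crux: there $\lambda=\beta$ \emph{equals} $\lambda^*(\alpha-\beta)$, so the supremum argument only gives a non-strict inequality and has to be supplemented by a genuine non-existence estimate. Here I would use that {\bf (LI)} fails in the form $\|\nabla\varphi_q\|_p^p/\|\varphi_q\|_p^p=\lambda_1(p)$ (this is $s^*=s^*_+$), so by simplicity of $\lambda_1(p)$ the normalized common eigenfunction $\varphi:=\varphi_q>0$ satisfies \emph{both} $-\Delta_p\varphi=\lambda_1(p)\varphi^{p-1}$ and $-\Delta_q\varphi=\lambda_1(q)\varphi^{q-1}$ in $\Omega$. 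Suppose, for contradiction, that $u>0$ solves $(GEV;\alpha,\lambda_1(q))$ with $\alpha>\lambda_1(p)$. Testing this equation with $\varphi^q/u^{q-1}$ and using the classical Picone inequality for $-\Delta_q$ together with $\|\nabla\varphi\|_q^q=\lambda_1(q)\|\varphi\|_q^q$ gives $\int_\Omega|\nabla u|^{p-2}\nabla u\cdot\nabla(\varphi^q/u^{q-1})\,dx\ge\alpha\int_\Omega u^{p-q}\varphi^q\,dx$; on the other hand the modified Picone identity (the one named among the keywords) bounds the left-hand side from above by $\lambda_1(p)\int_\Omega u^{p-q}\varphi^q\,dx$, with equality only when $u$ is proportional to $\varphi$. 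Hence $\alpha\le\lambda_1(p)$, contradicting $\alpha>\lambda_1(p)$. The other half-line, $\alpha=\lambda_1(p)$ and $\beta>\lambda_1(q)$, is handled symmetrically by testing a hypothetical positive solution with $\varphi^p/u^{p-1}$ and using the classical Picone inequality for $-\Delta_p$, which forces $\beta\le\lambda_1(q)$. I expect piece (c) --- which amounts to saying that, when {\bf (LI)} fails, the supremum in \eqref{def:lambda_1} is attained only at $s=s^*$ and nowhere else along $\mathcal{C}$ --- to be the only real obstacle; the remainder is bookkeeping with statements already at our disposal.
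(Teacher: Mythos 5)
Your sufficiency argument and your handling of pieces (a) and (b) match the paper: sufficiency is Propositions~\ref{prop:non-exist} and \ref{prop:exist-1}, the closed lower quadrant is Proposition~\ref{prop:non-exist}, and the open quadrant $(\lambda_1(p),+\infty)\times(\lambda_1(q),+\infty)$ is excluded by Part~(iv) of Proposition~\ref{prop:prop} (equivalently, via the explicit form of $\lambda^*$ and the definition \eqref{def:lambda_1}, as you do). The gap is in piece (c), on the half-line $(\lambda_1(p),+\infty)\times\{\lambda_1(q)\}$. There you test with $\varphi^q/u^{q-1}$ and assert that a ``modified Picone identity'' gives
$$
\int_\Omega |\nabla u|^{p-2}\nabla u\,\nabla\!\left(\frac{\varphi^q}{u^{q-1}}\right)dx\ \le\ \lambda_1(p)\int_\Omega u^{p-q}\varphi^q\,dx .
$$
No such inequality is available: the generalized Picone inequality of Il'yasov requires the exponent of the test quotient to be at least the order of the operator, and here you have power $q$ against the $p$-Laplacian with $q<p$. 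The paper's own modified Picone (Proposition~\ref{picone}) is a different statement, and the bound the paper actually proves for this pairing (in Lemma~\ref{lem:bdd-small}) is $|\nabla u|^{p-2}\nabla u\,\nabla(\varphi^q/u^{q-1})\le |\nabla\varphi|^q|\nabla u|^{p-q}$, which involves $\nabla u$ and is useless for your purpose. Moreover the pointwise version of your claimed bound is false in general: the right-hand side equals $|\nabla\varphi|^{p-2}\nabla\varphi\,\nabla(u^{p-q}\varphi^{q-p+1})$ after undoing the eigenvalue equation, and at a point where $u=\varphi=1$, $\nabla u=t\,\nabla\varphi$, it tends to $q-p+1<0$ as $t\to0$ while the left-hand side tends to $0^+$ whenever $p>q+1$ (and it also fails for $t$ slightly above $1$ when $p-q<1$).

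The repair is exactly what the paper does, and it is already contained in your treatment of the \emph{other} half-line: use the single test function $\varphi_p^p/u^{p-1}$ (power $p$, the larger exponent) for \emph{both} half-lines. The classical Picone inequality \eqref{eq:equiv:pic1} handles the $p$-Laplacian term, and Il'yasov's generalized Picone \eqref{eq:equiv:pic2} (legitimate here since the power $p$ is $\ge$ the order $q$) handles the $q$-Laplacian term, using that $\varphi_p\equiv k\varphi_q$ is simultaneously an eigenfunction of $-\Delta_p$ and $-\Delta_q$. This yields in one stroke
$$
\alpha\int_\Omega\varphi_p^p\,dx+\beta\int_\Omega\varphi_p^p u^{q-p}\,dx\ \le\ \lambda_1(p)\int_\Omega\varphi_p^p\,dx+\lambda_1(q)\int_\Omega\varphi_p^p u^{q-p}\,dx,
$$
which forces $\alpha\le\lambda_1(p)$ when $\beta=\lambda_1(q)$ and $\beta\le\lambda_1(q)$ when $\alpha=\lambda_1(p)$, eliminating both open half-lines. (Note also that even for your second half-line you need \eqref{eq:equiv:pic2} for the $q$-term, not only the classical Picone for $-\Delta_p$; that part is a minor omission rather than an error.)
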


\smallskip
The second main result is related to the case when $\lambda_1(p)$ and $\lambda_1(q)$ 
have different eigenspaces, and justifies that $\mathcal{C}$ is indeed a \textit{threshold curve} which separates the regions of existence and non-existence of positive solutions for $(GEV; \alpha, \beta)$.

\begin{theorem}\label{thm:main} 
Assume that {\bf (LI)} is satisfied.
If one of the following cases holds (see Fig.~2), then 
$(GEV;\alpha,\beta)$ has at least one positive solution: 
\begin{itemize} 
\item[{\rm (i)}] 
$\lambda_1(q)<\beta$ and $\beta < \lambda^*(\alpha-\beta)$; 
\item[{\rm (ii)}] 
$\lambda_1(p)<\alpha$ and $\beta < \lambda^*(\alpha-\beta)$. 
\end{itemize} 

Conversely, 
if $\beta>\lambda^*(\alpha-\beta)$, then 
$(GEV;\alpha,\beta)$ has no positive solutions. 
\end{theorem}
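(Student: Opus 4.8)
The plan is to treat the three assertions separately, relying on the monotonicity and continuity properties of $\lambda^*$ collected in Proposition~\ref{prop:prop} together with the sub-/super-solution method. For the non-existence part (the ``conversely''), suppose $\beta > \lambda^*(\alpha-\beta)$. Setting $s = \alpha-\beta$, we have $\beta > \lambda^*(s) = \sup\{\lambda : (GEV;\lambda+s,\lambda)\text{ has a positive solution}\}$, so by the very definition of $\lambda^*(s)$ in \eqref{def:lambda_1}, the problem $(GEV;\beta+s,\beta) = (GEV;\alpha,\beta)$ cannot have a positive solution. This direction is immediate once the equivalence $(GEV;\alpha,\beta) \Leftrightarrow (GEV;\beta+s,\beta)$ (noted before the definition) is invoked.

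For the existence assertions (i) and (ii), the strategy is to start from a pair $(\alpha',\beta')$ on the curve $\mathcal{C}$ — or from a slightly perturbed point where a positive solution is guaranteed — and to construct a sub-solution and a super-solution for $(GEV;\alpha,\beta)$ sandwiching a solution. Concretely, in case (i) we have $\lambda_1(q) < \beta < \lambda^*(\alpha-\beta)$; with $s = \alpha-\beta$, pick $\lambda$ with $\beta < \lambda \le \lambda^*(s)$ such that $(GEV;\lambda+s,\lambda)$ has a positive solution $u_\lambda$ (using continuity (v) of $\lambda^*$ and the definition of the sup, one finds such $\lambda$ arbitrarily close to $\lambda^*(s)$). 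Since $\lambda + s = \alpha - \beta + \lambda > \alpha$ and $\lambda > \beta$, the function $u_\lambda$ is a \emph{super-solution} of $(GEV;\alpha,\beta)$, because $-\Delta_p u_\lambda - \Delta_q u_\lambda = (\lambda+s)u_\lambda^{p-1} + \lambda u_\lambda^{q-1} \ge \alpha u_\lambda^{p-1} + \beta u_\lambda^{q-1}$. For the sub-solution, use $\varepsilon\varphi_q$ with $\varepsilon$ small: since $\beta > \lambda_1(q)$, one checks $-\Delta_q(\varepsilon\varphi_q) = \lambda_1(q)\varepsilon^{q-1}\varphi_q^{q-1} < \beta(\varepsilon\varphi_q)^{q-1}$, and the $p$-term $-\Delta_p(\varepsilon\varphi_q)$ is of order $\varepsilon^{p-1}$, hence negligible relative to the $q$-term for small $\varepsilon$ (here $q < p$); choosing $\varepsilon$ small enough also gives $\varepsilon\varphi_q \le u_\lambda$ by the strong maximum principle / boundary estimates (both functions are in $C^1_0(\overline\Omega)$ and positive in the interior with nonvanishing normal derivative). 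Then the standard sub-super-solution theorem for the $(p,q)$-Laplacian yields a positive solution of $(GEV;\alpha,\beta)$ with $\varepsilon\varphi_q \le u \le u_\lambda$. Case (ii) is handled symmetrically, now using $\varepsilon\varphi_p$ as sub-solution and exploiting $\alpha > \lambda_1(p)$, with the $q$-term negligible — wait, here one must be careful since $q<p$ makes the $q$-term dominant near zero, so instead one uses that $(GEV;\alpha,\beta)$ with $\lambda_1(p) < \alpha$ already has a positive solution by Proposition~\ref{prop:exist-1} when $\beta < \lambda_1(q)$, and then bootstraps via the super-solution $u_\lambda$ as before, combined with monotonicity (vi) of $\lambda^*+s$; alternatively one translates case (ii) into case (i) by the symmetry of the two parameters together with (vi).

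The main obstacle will be constructing a valid sub-solution in the regime where neither $\beta > \lambda_1(q)$ is large nor a solution is a priori available from Proposition~\ref{prop:exist-1} — i.e. verifying case (ii) cleanly. The delicate point is the interplay between the orders $\varepsilon^{p-1}$ and $\varepsilon^{q-1}$ with $q<p$: near $u=0$ the $q$-Laplacian term controls the behaviour, so a naive scaled eigenfunction of $-\Delta_p$ need not be a sub-solution. I expect the resolution to require choosing the sub-solution adapted to the $q$-structure (a small multiple of $\varphi_q$, or of $\varphi_1(q,m_q)$-type object) in \emph{all} cases, and then separately arguing that the condition $\beta < \lambda^*(\alpha-\beta)$ — via the monotonicity $\lambda^*(s)+s$ non-decreasing — forces $\lambda+s > \alpha$ for an admissible $\lambda$, which is precisely what makes $u_\lambda$ a super-solution. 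Continuity of $\lambda^*$ (assertion (v)) is essential to guarantee that the strict inequality $\beta < \lambda^*(s)$ leaves room to pick such a $\lambda$ with an actual positive solution, rather than only a supremum that is not attained; one should note whether $\lambda^*(s)$ is itself attained (this is not claimed in Proposition~\ref{prop:prop}), and if not, approximate from below and pass to the limit using a priori $C^{1,\alpha}$ bounds and compactness for the $(p,q)$-Laplacian.
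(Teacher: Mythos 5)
Your non-existence argument and your construction of the super-solution $u_\lambda$ (a positive solution of $(GEV;\mu+s,\mu)$ for some $\mu\in(\beta,\lambda^*(s)]$, which exists by the definition of the supremum since $\beta<\lambda^*(s)$) coincide with the paper's. However, there are two genuine gaps. First, the claim that $\varepsilon\varphi_q$ is a sub-solution for small $\varepsilon$ does not hold up: the required weak inequality is
$\varepsilon^{p-q}\bigl(\intO |\nabla\varphi_q|^{p-2}\nabla\varphi_q\nabla\psi\,dx-\alpha\intO\varphi_q^{p-1}\psi\,dx\bigr)\le(\beta-\lambda_1(q))\intO\varphi_q^{q-1}\psi\,dx$ for \emph{all} $\psi\ge 0$ simultaneously, and "the $p$-term is of order $\varepsilon^{p-1}$, hence negligible" only works for each fixed $\psi$ separately. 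To get a single admissible $\varepsilon$ you would need the distributional bound $-\Delta_p\varphi_q\le C\,\varphi_q^{q-1}$, which is not available: $\varphi_q$ is only $C^{1,\alpha}_0(\overline\Omega)$, it is an eigenfunction of $-\Delta_q$ and not of $-\Delta_p$, and $-\Delta_p\varphi_q$ is merely a distribution with no pointwise control near the critical set of $\varphi_q$. The paper avoids this entirely: it takes $0$ as the sub-solution, minimizes the truncated functional $\E^{[0,w]}$ of \eqref{def of E} over $[0,w]$, and shows the minimizer is non-trivial because $\E^{[0,w]}(t\varphi_q)=\frac{t^p}{p}(\|\nabla\varphi_q\|_p^p-\alpha\|\varphi_q\|_p^p)-\frac{t^q}{q}(\beta-\lambda_1(q))\|\varphi_q\|_q^q<0$ for small $t>0$ (Lemma \ref{lem:ss-method}); this only evaluates an energy at $t\varphi_q$ and never needs $t\varphi_q$ to be a sub-solution.

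Second, in case (ii) you dispose of $\beta>\lambda_1(q)$ via (i) and $\beta<\lambda_1(q)$ via Proposition \ref{prop:exist-1}, but the borderline $\beta=\lambda_1(q)$ (with $\alpha>\lambda_1(p)$) is precisely the case where all the new work lies, and your "bootstrap/symmetry" remark does not supply an argument: Lemma \ref{lem:ss-method} requires $\beta>\lambda_1(q)$ strictly, and when $\beta=\lambda_1(q)$ the $t^q$-term above vanishes, so negativity of $\E^{[0,w]}(t\varphi_q)$ hinges on the sign of $\|\nabla\varphi_q\|_p^p-\alpha\|\varphi_q\|_p^p$. The paper splits into three sub-cases accordingly: if this quantity is positive it invokes Theorem \ref{thm:MT-3}; if negative, the truncated functional is again negative at $t\varphi_q$; if zero, either the minimum of $\E^{[0,w]}$ is negative or $t\varphi_q$ itself is a global minimizer and hence a solution. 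Without this case analysis (or an equivalent substitute) your proof of (ii) is incomplete.
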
 
This theorem means that $(GEV;\alpha,\beta)$ possesses a positive solution 
if $(\alpha,\beta)$ is below the curve $\mathcal{C}$, and has no positive solutions if $(\alpha,\beta)$ is above $\mathcal{C}$.

We emphasize that Theorem \ref{thm:main} in combination with Propositions \ref{prop:non-exist} and \ref{prop:exist-1} provides a full description of $2$-dimensional sets of existence and non-existence for positive solutions to $(GEV;\alpha, \beta)$ when {\bf (LI)} holds.

\smallskip
Finally, we provide the results about existence and non-existence on the curve $\mathcal{C}$.

\begin{proposition}\label{prop:special-2} 
The following assertions hold:
\begin{itemize} 
\item[{\rm (i)}] 
If $\lambda^*(s)+s >\lambda_1(p)$ and $\lambda^*(s)>\lambda_1(q)$, 
then $(GEV;\lambda^*(s)+s, \lambda^*(s))$ has at least one positive solution; 
\item[{\rm (ii)}] 
If $s > s^*_+$, then $(GEV;\lambda^*(s)+s, \lambda^*(s)) \equiv (GEV;\lambda_1(q)+s, \lambda_1(q))$ has no positive solutions.
\end{itemize} 
\end{proposition}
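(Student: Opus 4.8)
We prove the two assertions separately, using freely the standard tools available for these operators: $\C$-regularity and Hopf's lemma (so that any positive solution of $(GEV;a,b)$ is strictly positive in $\Omega$ with the expected boundary behaviour, whence the quotients below are admissible test functions), the $(S_{+})$-property of $-\Delta_p$ and of $-\Delta_p-\Delta_q$ on $\W$, the classical Picone inequality for the $r$-Laplacian, and the strong maximum principle. For (i), the plan is to show that the supremum in \eqref{def:lambda_1} is attained. Using the definition of $\lambda^*(s)$ (together, if needed, with the remark that a positive solution of $(GEV;a,b)$ is a supersolution of $(GEV;a',b')$ once $a'\le a$, $b'\le b$), one picks $\lambda_n\nearrow\lambda^*(s)$ and positive solutions $u_n$ of $(GEV;\lambda_n+s,\lambda_n)$. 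The first step is to bound $\{u_n\}$ in $\W$: if not, normalising $w_n:=u_n/\|\nabla u_n\|_p$ and dividing the weak formulation by $\|\nabla u_n\|_p^{p-1}$, each $q$-term acquires the factor $\|\nabla u_n\|_p^{q-p}\to0$ (this uses $q<p$), so along a subsequence $w_n\rightharpoonup w$ in $\W$ with $\langle-\Delta_p w_n,w_n-w\rangle\to0$, hence $w_n\to w$ by the $(S_{+})$-property; then $\|\nabla w\|_p=1$, $w\ge0$, $-\Delta_p w=(\lambda^*(s)+s)w^{p-1}$, so $w>0$ and $\lambda^*(s)+s=\lambda_1(p)$, contradicting the hypothesis. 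With $\{u_n\}$ bounded, $u_n\rightharpoonup u$ in $\W$; testing with $u_n-u$ and using $u_n\to u$ in $L^p\cap L^q$ gives $\langle(-\Delta_p-\Delta_q)u_n,u_n-u\rangle\to0$, so $u_n\to u$ strongly in $\W$ and, in the limit, $u\ge0$ solves $(GEV;\lambda^*(s)+s,\lambda^*(s))$. Finally one excludes $u\equiv0$: if $u_n\to0$ in $\W$, renormalise $v_n:=u_n/\|\nabla u_n\|_q$ and divide by $\|\nabla u_n\|_q^{q-1}$; now the $p$-terms carry the vanishing factor $\|\nabla u_n\|_q^{p-q}$, and (controlling $\{v_n\}$ with standard a priori estimates) one obtains in the limit a positive first eigenfunction of $-\Delta_q$, i.e.\ $\lambda^*(s)=\lambda_1(q)$, contrary to the hypothesis; hence $u\not\equiv0$ and $u>0$ in $\Omega$. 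The main obstacle in (i) is exactly this boundedness/non-degeneracy step, which is where both strict inequalities of the hypothesis are genuinely used.

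For (ii), let $s>s^*_+$. By Proposition~\ref{prop:prop}(iii) we have $\lambda^*(s)=\lambda_1(q)$, so the problem is $(GEV;\alpha,\lambda_1(q))$ with $\alpha:=\lambda_1(q)+s$, and the definition of $s^*_+$ turns $s>s^*_+$ into the strict inequality $\alpha\|\varphi_q\|_p^p>\|\nabla\varphi_q\|_p^p$. Assume for contradiction that $u>0$ solves $(GEV;\alpha,\lambda_1(q))$. The plan is to test the weak formulation with $\varphi_q^p/u^{p-1}\in\W$ and apply the classical Picone inequality to the $p$-Laplacian term, which yields
\[
\alpha\,\|\varphi_q\|_p^p+\lambda_1(q)\intO u^{q-p}\varphi_q^p\,dx\;\le\;\|\nabla\varphi_q\|_p^p+\intO|\nabla u|^{q-2}\nabla u\cdot\nabla\Bigl(\tfrac{\varphi_q^p}{u^{p-1}}\Bigr)dx;
\]
and then to establish the modified Picone inequality
\[
\intO|\nabla u|^{q-2}\nabla u\cdot\nabla\Bigl(\tfrac{\varphi_q^p}{u^{p-1}}\Bigr)dx\;\le\;\lambda_1(q)\intO u^{q-p}\varphi_q^p\,dx,
\]
after which the two displays combine to $\alpha\|\varphi_q\|_p^p\le\|\nabla\varphi_q\|_p^p$ — the desired contradiction.

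The modified Picone inequality is the crux of (ii) and the step I expect to be hardest. To prove it, one tests the eigenvalue equation $-\Delta_q\varphi_q=\lambda_1(q)\varphi_q^{q-1}$ with $u^{q-p}\varphi_q^{p-q+1}\in W_0^{1,q}$ (admissible because Hopf's estimates for $u$ and $\varphi_q$ compensate the singular factor $u^{q-p-1}$ in the gradient near $\partial\Omega$), which rewrites $\lambda_1(q)\intO u^{q-p}\varphi_q^p\,dx$ as $\intO|\nabla\varphi_q|^{q-2}\nabla\varphi_q\cdot\nabla(u^{q-p}\varphi_q^{p-q+1})\,dx$, and then verifies the pointwise estimate $|\nabla u|^{q-2}\nabla u\cdot\nabla(\varphi_q^p u^{1-p})\le|\nabla\varphi_q|^{q-2}\nabla\varphi_q\cdot\nabla(u^{q-p}\varphi_q^{p-q+1})$ a.e.\ in $\Omega$; expanding both gradients and normalising, this reduces, via $\nabla u\cdot\nabla\varphi_q\le|\nabla u||\nabla\varphi_q|$ and $1<q<p$, to an elementary one-variable Picone-type inequality, together with a check that both integrands are absolutely integrable up to $\partial\Omega$. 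Note that equality in this argument forces $u$ to be proportional to $\varphi_q$; this is precisely why the endpoint $s=s^*_+$ must be left out — there $\alpha\|\varphi_q\|_p^p=\|\nabla\varphi_q\|_p^p$, no contradiction arises, and indeed when {\bf (LI)} fails the problem $(GEV;\lambda_1(p),\lambda_1(q))$ does carry the positive solution $\varphi_p\equiv k\varphi_q$ (cf.\ Proposition~\ref{prop:non-exist}).
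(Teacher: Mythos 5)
Your Part (ii) follows the paper's proof essentially verbatim: the paper also reduces to $(GEV;\lambda_1(q)+s,\lambda_1(q))$ via Proposition \ref{prop:prop}(iii), tests with $\varphi_q^p/u^{p-1}$, uses the classical Picone inequality for the $p$-part, and handles the $q$-part with exactly the ``modified Picone'' inequality you state --- the paper simply invokes it as the generalized Picone identity of Il'yasov \cite{ilcomp} (inequality \eqref{eq:equiv:pic2} with $\varphi_q$ in place of $\varphi_p$) rather than re-deriving it pointwise. Likewise, the first two steps of your Part (i) coincide with the paper: boundedness of $\{u_n\}$ via normalization by $\|\nabla u_n\|_p$ and the $(S_+)$-property is precisely Lemma \ref{lem:bdd-sol}, and the strong subconvergence to a critical point $u_0$ of $\I$ is the paper's bounded Palais--Smale argument.

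The gap is in your non-degeneracy step for Part (i). You renormalize by $t_n:=\|\nabla u_n\|_q$ and assert that the $p$-Laplacian terms then carry a vanishing factor. But after dividing the weak formulation by $t_n^{q-1}$, the principal $p$-term is $t_n^{p-q}\intO|\nabla v_n|^{p-2}\nabla v_n\nabla\varphi\,dx$ with $v_n=u_n/t_n$, and it is bounded only by $t_n^{p-q}\|\nabla v_n\|_p^{p-1}\|\nabla\varphi\|_p=\|\nabla u_n\|_p^{p-1}\,t_n^{1-q}\,\|\nabla\varphi\|_p$. Since $q>1$ the factor $t_n^{1-q}$ blows up, and on a bounded domain there is no a priori lower bound for $\|\nabla u_n\|_q$ in terms of $\|\nabla u_n\|_p$, so nothing forces this product to vanish or even stay bounded; equivalently, $\{v_n\}$ is normalized in $W_0^{1,q}$ but need not be bounded in $\W$, so you can neither pass to the limit in the $p$-Laplacian term nor run the $(S_+)$-argument for $-\Delta_q$ with $v_n-v$ as a test function (the weak limit $v$ need not lie in $\W$). ``Standard a priori estimates'' do not obviously close this. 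The paper avoids the second renormalization entirely: it proves a Picone-type integral inequality (Lemma \ref{lem:bdd-small}),
$$
\intO|\nabla\varphi|^q|\nabla u_n|^{p-q}\,dx+\intO|\nabla\varphi|^q\,dx\ \ge\ \intO\bigl(\alpha_n u_n^{p-q}+\beta_n\bigr)\varphi^q\,dx,
$$
valid for every positive solution $u_n$ and every $\varphi\in{\rm int}\,\C_+$, takes $\varphi=\varphi_q$, and lets $n\to\infty$ using only $u_n\to0$ in $\W$ (the first term vanishes by H\"older) to obtain $\|\nabla\varphi_q\|_q^q\ge\beta\|\varphi_q\|_q^q$, contradicting $\beta>\lambda_1(q)$. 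You should replace your renormalization step by an argument of this type; it is in the same Picone spirit as your Part (ii).
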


We remark that the existence of positive solutions remains open in the borderline case $(GEV; \lambda_1(p), \lambda^*(s))$ if the curve $\mathcal{C}$ touches the line $\{\lambda_1(p)\} \times (\lambda_1(q), +\infty)$.

\smallskip
Let us note that the main disadvantage of characterization \eqref{def:lambda_1} of $\lambda^*(s)$ is its non-constructive form. 
However, using the extended functional method (see \cite{ilfunc, BobkovIlyasov2014}) we provide the equivalent characterization of $\lambda^*(s)$ by an explicit minimax formula, which can be used in further numerical investigations of $(GEV; \alpha, \beta)$:
\begin{equation}
\label{def:lambda_2}
\Lambda^*(s) = \sup_{u \in {\rm int}\,\C_+} \inf_{\varphi \in C^1_0(\overline{\Omega})_+ \setminus\{0\}}  \mathcal{L}_s(u; \varphi),
\end{equation}
where 
$$
\mathcal{L}_s(u; \varphi) = \frac{\intO |\nabla u|^{p-2}\nabla u\nabla\varphi \,dx 
+\intO |\nabla u|^{q-2}\nabla u\nabla\varphi \,dx - s \intO 
|u|^{p-2} u \varphi\,dx}{\intO 
|u|^{p-2} u \varphi\,dx + \intO |u|^{q-2} u \varphi\,dx}
$$
and  ${\rm int}\,\C_+$ denotes the interior of the positive cone 
$$
C^1_0(\overline{\Omega})_+= \left\{ u \in C^1_0(\overline{\Omega})\,:\, 
u(x) \geq 0\ \text{ for every } x \in \Omega \, \right\} 
$$
of the Banach space $C^1_0(\overline{\Omega})$, that is, 
\begin{equation}
\label{def:int}
{\rm int}\,\C_+=\left\{u \in C^1_0(\overline{\Omega})\,:\, 
u(x)>0 \text{ for all } x \in \Omega, \,
\frac{\partial u}{\partial\nu}(x) < 0 \text{ for all } x \in \partial\Omega \right\}, 
\end{equation}
with an outer normal $\nu$ to $\partial \Omega$. 

\begin{proposition}
\label{prop:equiv}
$\Lambda^*(s) = \lambda^*(s)$ for all $s \in \mathbb{R}$.
\end{proposition}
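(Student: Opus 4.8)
The plan is to prove the two inequalities $\Lambda^*(s) \le \lambda^*(s)$ and $\Lambda^*(s) \ge \lambda^*(s)$ separately, exploiting the fact that both quantities describe the same threshold for positive solutions, one via existence of a genuine solution and the other via a saddle-point ("extended functional") characterization in the positive cone. Throughout I would use the strong maximum principle / Vázquez-type regularity, which guarantees that any positive solution of $(GEV;\lambda+s,\lambda)$ lies in ${\rm int}\,\C_+$, and the fact that the denominator $\intO |u|^{p-2}u\varphi\,dx + \intO |u|^{q-2}u\varphi\,dx$ is strictly positive whenever $u \in {\rm int}\,\C_+$ and $\varphi \in \C_+\setminus\{0\}$, so that $\mathcal{L}_s(u;\varphi)$ is well-defined.

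For the inequality $\Lambda^*(s) \ge \lambda^*(s)$, I would take any $\lambda$ for which $(GEV;\lambda+s,\lambda)$ has a positive solution $u_\lambda$, and show that $u_\lambda$, viewed as a test element in the outer supremum of \eqref{def:lambda_2}, already yields $\inf_\varphi \mathcal{L}_s(u_\lambda;\varphi) = \lambda$. Indeed, testing the weak formulation of $(GEV;\lambda+s,\lambda)$ against an arbitrary $\varphi \in \C_+\setminus\{0\}$ gives
$$
\intO |\nabla u_\lambda|^{p-2}\nabla u_\lambda\nabla\varphi\,dx + \intO |\nabla u_\lambda|^{q-2}\nabla u_\lambda\nabla\varphi\,dx - s\intO u_\lambda^{p-1}\varphi\,dx = \lambda\left(\intO u_\lambda^{p-1}\varphi\,dx + \intO u_\lambda^{q-1}\varphi\,dx\right),
$$
so that $\mathcal{L}_s(u_\lambda;\varphi) = \lambda$ for every such $\varphi$; hence the inner infimum over $\varphi$ equals $\lambda$, and therefore $\Lambda^*(s) \ge \lambda$. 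Taking the supremum over admissible $\lambda$, and handling the degenerate case $\lambda^*(s) = -\infty$ trivially, gives $\Lambda^*(s) \ge \lambda^*(s)$.

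For the reverse inequality $\Lambda^*(s) \le \lambda^*(s)$, the idea is that if $\Lambda^*(s) > \lambda^*(s)$ then there is some $u \in {\rm int}\,\C_+$ and some $\lambda > \lambda^*(s)$ with $\mathcal{L}_s(u;\varphi) \ge \lambda$ for all $\varphi \in \C_+\setminus\{0\}$; I would then argue that this forces $u$ to be a (weak) supersolution of $(GEV;\lambda+s,\lambda)$, i.e.
$$
\intO |\nabla u|^{p-2}\nabla u\nabla\varphi\,dx + \intO |\nabla u|^{q-2}\nabla u\nabla\varphi\,dx \ge (\lambda+s)\intO u^{p-1}\varphi\,dx + \lambda\intO u^{q-1}\varphi\,dx
$$
for all $\varphi \in \C_+\setminus\{0\}$, and hence, by density, for all nonnegative $\varphi \in \W$. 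Combined with the standard fact that for $\lambda$ slightly above $\lambda^*(s)$ one can construct an arbitrarily small positive subsolution of $(GEV;\lambda'+s,\lambda')$ for suitable $\lambda' \in (\lambda^*(s),\lambda)$ — e.g. a small multiple of $\varphi_q$ together with the estimates in Proposition \ref{prop:prop}(ii) controlling where solutions may exist — the sub-/super-solution method (which is available for the $(p,q)$-Laplacian and is precisely the tool used elsewhere in the paper) would produce a genuine positive solution for parameters beyond $\lambda^*(s)$, contradicting the definition \eqref{def:lambda_1}. The element $u$ can be rescaled so as to dominate the small subsolution, since ${\rm int}\,\C_+$ elements are bounded below by positive multiples of any fixed interior function. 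This yields $\Lambda^*(s) \le \lambda^*(s)$.

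The main obstacle is the reverse inequality, and specifically the step that converts the pointwise-in-$\varphi$ bound $\mathcal{L}_s(u;\varphi) \ge \lambda$ into the differential (supersolution) inequality and then extracts an actual solution: one must be careful that the supremum defining $\Lambda^*(s)$ is attained or nearly attained by an element of ${\rm int}\,\C_+$ (not merely of $\C_+$), that the sub- and supersolutions are correctly ordered — which may require rescaling $u$ and invoking the precise form of ${\rm int}\,\C_+$ in \eqref{def:int} — and that the comparison argument genuinely produces a solution \emph{strictly above} $\lambda^*(s)$ rather than exactly at it. Handling the boundary regularity and the strict positivity of the denominator uniformly, together with the openness needed to pass from $\lambda > \lambda^*(s)$ to an intermediate $\lambda'$ where both a sub- and a supersolution coexist, is where the technical work concentrates.
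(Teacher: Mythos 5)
Your proof follows essentially the same route as the paper: the inequality $\Lambda^*(s)\ge\lambda^*(s)$ is obtained exactly as you describe (a positive solution $u_\lambda$ of $(GEV;\lambda+s,\lambda)$ makes $\mathcal{L}_s(u_\lambda;\varphi)=\lambda$ for every admissible $\varphi$, so one passes to the supremum over $\lambda$, or over a sequence $\lambda_n\to\lambda^*(s)$), and the reverse inequality is proved by the same contradiction: a witness $w\in{\rm int}\,\C_+$ with $\inf_\varphi\mathcal{L}_{s}(w;\varphi)=\mu>\lambda^*(s)$ is a positive super-solution of $(GEV;\mu+s,\mu)$, and since $\mu>\lambda^*(s)\ge\lambda_1(q)$ by Proposition \ref{prop:prop}(ii), the existence of a positive solution at level $\mu$ contradicts the definition of $\lambda^*(s)$.

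The one place where you deviate is the final step converting the super-solution into a genuine solution. You propose to pair $w$ with a small positive sub-solution of the form $t\varphi_q$; this is more delicate than you suggest, since $\varphi_q$ is only $C^{1,\alpha}$ and there is no pointwise control of $-\Delta_p(t\varphi_q)$ by $\alpha(t\varphi_q)^{p-1}+\beta(t\varphi_q)^{q-1}$ near $\partial\Omega$, so verifying the sub-solution inequality for the full $(p,q)$-operator is not immediate. The paper avoids this entirely via Lemma \ref{lem:ss-method}: it takes $0$ as the sub-solution, minimizes the truncated functional $\E^{[0,w]}$ globally, and shows the minimizer is non-trivial because $\E^{[0,w]}(t\varphi_q)<0$ for small $t>0$ when $\beta>\lambda_1(q)$ (an energy estimate, not a pointwise differential inequality). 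If you invoke that lemma instead of constructing an ordered pair with a positive sub-solution, your argument closes without further work.
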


\smallskip
Notice that we don't provide information about uniqueness, multiplicity, or existence and non-existence of sign-changing (nodal) solutions for $(GEV; \alpha, \beta)$. 
To the best of our knowledge, there are only partial results in these directions (see \cite[Theorem~3]{T-2014}, \cite{T-Uniq}).

\smallskip
The article is organized as follows. 
In Section 3, we prove Propositions \ref{prop:non-exist} and \ref{prop:exist-1}.
In Section 4, we prove the existence of solution for $(GEV; \alpha, \beta)$ in the neighbourhood of $(\lambda_1(p), \lambda_1(q))$ provided {\bf (LI)} is satisfied, which becomes the main ingredient in the proof of Proposition \ref{prop:prop}, Part (iv).
In Section 5, we introduce the method of super- and sub-solutions, which is the necessary tool for the proofs in next sections.
Section 6 is devoted to the proof of Proposition \ref{prop:prop}.
In Section 7, we prove the main existence results: Theorems \ref{thm:main2}, \ref{thm:main} and Proposition \ref{prop:special-2}. 
In Section 8, we prove Proposition \ref{prop:equiv}.
In Appendix A, we present a version of the Picone identity for $(p,q)$-Laplacian.
Appendix B is devoted to the proof of non-monotonic property of the first eigenvalue $\lambda_1(p)$ w.r.t. $p > 1$. In Appendix C, we provide an example of the violation of the assumption {\bf (LI)}.

\section{Proofs of Propositions \ref{prop:non-exist} and \ref{prop:exist-1}}

\begin{proof*}{Proposition~\ref{prop:non-exist}} 
Let $\alpha\le \lambda_1(p)$ and $\beta\le \lambda_1(q)$. 
Assume that $u \in \W$ is a non-trivial solution of $(GEV;\alpha,\beta)$. 
Taking $u$ as a test function and using the Poincar\'e inequality, we have 
\begin{align*}
0 \le (\lambda_1(p)-\alpha)\|u\|_p^p &\le \|\nabla u\|_p^p-\alpha\|u\|_p^p \\
&=
\beta\|u\|_q^q-\|\nabla u\|_q^q \le (\beta-\lambda_1(q))\|u\|_q^q\le 0. 
\end{align*}
This chain of inequalities is satisfied if and only if $\alpha=\lambda_1(p)$, $\beta=\lambda_1(q)$ and 
$u$ is the eigenfunction corresponding to $\lambda_1(p)$ and $\lambda_1(q)$ simultaneously.
As a result, our conclusion is shown. 
\end{proof*}

To prove Proposition~\ref{prop:exist-1} we introduce  functional 
$I_{\alpha,\beta}$ on $\W$ by 
\begin{equation}
\label{def:I}
\I(u):=\frac{1}{p}\|\nabla u\|_p^p +\frac{1}{q}\|\nabla u\|_q^q 
-\frac{\alpha}{p}\|u_+\|_p^p -\frac{\beta}{q}\|u_+\|_q^q. 
\end{equation}

\begin{remark}\label{rem:positive} 
It can be shown 
that non-trivial critical points of 
$\I$ correspond to non-negative solutions of $(GEV;\alpha,\beta)$ 
by taking $u_-$ as a test function. 
Moreover, any non-negative solution of $(GEV;\alpha,\beta)$ 
belongs to ${\rm int}\,\C_+$ (see definition \eqref{def:int}).
In fact, if $u$ is a solution of $(GEV;\alpha,\beta)$, 
then $u\in
L^\infty(\Omega)$ by the Moser iteration process (see Appendix A in
\cite{MMT}). 
Hence, the regularity up to the boundary in \cite[Theorem~1]{Lieberman} and \cite[p.~320]{L} ensures that $u\in
C^{1,\beta}_0(\overline{\Omega})$ with some $\beta\in(0,1)$.
Moreover, the strong maximum principle \cite[Theorem~5.4.1]{PS} guarantees that $u>0$ in $\Omega$, since $u \not\equiv 0$ and $u \geq 0$. Therefore, $u$ is a positive
solution of $(GEV;\alpha,\beta)$ and \cite[Theorem~5.5.1]{PS} implies that $\partial u/\partial \nu<0$ on $\partial\Omega$. As a result, $u \in {\rm int}\,\C_+$.
\end{remark}

\begin{proof*}{Proposition~\ref{prop:exist-1}} 
{\bf Case (i):} $\alpha>\lambda_1(p)$ and $0<\beta<\lambda_1(q)$. 
In this case, we note that 
$$
\lambda_1\left(p,\frac{\alpha}{\beta}\right)=\frac{\lambda_1(p)\beta}{\alpha}
<\beta<\lambda_1(q)=\lambda_1(q,1) 
$$
and  
$$
\alpha |u|^{p-2}u +\beta |u|^{q-2}u=
\beta\left(\frac{\alpha}{\beta} |u|^{p-2}u +|u|^{q-2}u\right). 
$$
Thus, our conclusion follows from application of Theorem~\ref{thm:MT-2} to the problem \eqref{eq:MT} with 
$\lambda=\beta$, $m_p=\alpha/\beta$ and $m_q=1$. 

{\bf Case (ii):} $0<\alpha<\lambda_1(p)$ and $\beta>\lambda_1(q)$.
We proceed as above, applying Theorem~\ref{thm:MT-2} to \eqref{eq:MT}
with $\lambda=\alpha$, $m_p=1$ and $m_q=\beta/\alpha$. 

{\bf Case (iii):} $\alpha>\lambda_1(p)$ and $\beta\le 0$. 
By the same argument as in \protect{\cite[Lemma 8]{T-2014}}, 
it can be shown that $\I$ satisfies the Palais--Smale condition. 
Moreover, it is proved in \cite[Theorem 2]{T-2014} that 
for functional $J$ on $\W$ defined by 
$$
J(u)=\frac{1}{p}\|\nabla u\|_p^p +\frac{1}{q}\|\nabla u\|_q^q 
-\frac{\alpha}{p}\|u_+\|_p^p, 
$$
there exist $\delta>0$ and $\rho>0$ such that 
$$
J(u)\ge \delta \quad {\rm provided}\ \|u\|_q=\rho. 
$$
Since $\beta \leq 0$, this implies that 
$\I(u)\geq J(u)\geq \delta$ provided $\|u\|_q=\rho$. 
For the positive eigenfunction $\varphi_p$ corresponding to 
$\lambda_1(p)$ 
and sufficiently large $t>0$, we have 
$$
\I(t\varphi_p)=\frac{t^p}{p}(\lambda_1(p)-\alpha)\|\varphi_p\|_p^p 
+\frac{t^q}{q}\left(\|\nabla \varphi_p\|_q^q
-\beta \|\varphi_p\|_q^q\right)<0,
$$
since $\lambda_1(p)-\alpha<0$ and $p>q$. 
Consequently, by applying the mountain pass theorem, 
 $\I$ has a positive critical value 
(see \cite[Theorem 2]{T-2014} or \cite[Theorem 3]{MT} for the details).

{\bf Case (iv):} $\alpha\le 0$ and $\beta>\lambda_1(q)$. 
In this case, it can be easily shown that 
$\I$ is coercive and bounded from below, due to $q<p$ and 
the inequality 
$$
\I(u)\ge \frac{1}{p}\|\nabla u\|_p^p - \frac{\beta}{q}\|u_+\|_q^q \ge 
\frac{1}{p}\|\nabla u\|_p^p -C\|\nabla u_+\|_p^q,
$$ 
where $C > 0$ is independent of $u \in \W$.
Moreover, $\I$ is weakly lower semi-continuous by the compactness of the embedding of $\W$ to 
$L^p(\Omega)$ and $L^q(\Omega)$, and therefore $\I$ has a global minimizer $u \in \W$ (cf. \cite[Theorem 1.1]{MW}). 
On the other hand, for the positive eigenfunction $\varphi_q$ corresponding to 
$\lambda_1(q)$ 
and sufficiently small $t>0$, we have 
$$
\I (t\varphi_q)=\frac{t^q}{q} (\lambda_1(q)-\beta)\|\varphi_q\|_q^q 
+\frac{t^p}{p} (\|\nabla \varphi_q\|_p^p-\alpha\|\varphi_q\|_p^p)<0, 
$$
whence $I(u) = \min_{\W} \I<0$, and therefore $u$ is a non-trivial solution of $(GEV; \alpha, \beta)$.
\end{proof*}

\section{Existence of solution for $\alpha = \lambda_1(p) + \varepsilon$, 
$\beta = \lambda_1(q) + \varepsilon$ under the assumption {\bf (LI)}}

We define the energy functional corresponding to $(GEV;\alpha,\beta)$ by
$$
E_{\alpha, \beta}(u) = \frac{1}{p} H_\alpha(u) + \frac{1}{q} G_\beta(u),
$$
where for further simplicity we denote
\begin{align*}
H_\alpha(u) &:= \int_\Omega |\nabla u|^p \, dx - \alpha \int_\Omega |u|^p \,dx, \\
G_\beta(u) &:= \int_\Omega |\nabla u|^q \, dx - \beta \int_\Omega |u|^q \,dx.
\end{align*}
Note that $E_{\alpha, \beta} \in C^1(\W, \mathbb{R})$. 
We introduce the so-called \textit{Nehari manifold}
$$
\mathcal{N}_{\alpha, \beta} := \left\{ u \in \W \setminus \{ 0 \}:  \left< E_{\alpha, \beta}'(u), u \right> =  H_\alpha(u) + G_\beta(u) = 0 \right\}.
$$

\begin{proposition}
\label{prop:minpoint}
Let $u \in \W$. If $H_\alpha(u) \cdot G_\beta(u) < 0$, then there exists a unique extremum point $t(u) > 0$ of $E_{\alpha, \beta}(t u)$ w.r.t. $t > 0$, and $t(u) u \in \mathcal{N}_{\alpha, \beta}$.
In particular, if
\begin{equation}
\label{eq:prop11}
G_\beta(u) < 0<H_\alpha(u),
\end{equation}
then $t(u)$ is the unique minimum point of  $E_{\alpha, \beta}(t u)$ w.r.t. $t > 0$, and $E_{\alpha, \beta}(t(u) u) < 0$.
\end{proposition}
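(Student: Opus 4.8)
The plan is to reduce the whole statement to an elementary one-variable calculus problem by exploiting the fact that $H_\alpha$ is positively homogeneous of degree $p$ and $G_\beta$ of degree $q$. For $u \in \W \setminus \{0\}$ and $t > 0$ we then have
$$
E_{\alpha,\beta}(tu) = \frac{t^p}{p} H_\alpha(u) + \frac{t^q}{q} G_\beta(u) =: \phi(t).
$$
First I would note that the hypothesis $H_\alpha(u)\cdot G_\beta(u) < 0$ already forces $u \not\equiv 0$, so this reduction is legitimate. Differentiating, $\phi'(t) = t^{q-1}\bigl( t^{p-q} H_\alpha(u) + G_\beta(u)\bigr)$, and since $t^{q-1} > 0$ for $t>0$, the positive critical points of $\phi$ are exactly the positive roots of $t^{p-q} = -G_\beta(u)/H_\alpha(u)$. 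Under the assumption $H_\alpha(u)\cdot G_\beta(u) < 0$ the right-hand side is strictly positive, so there is precisely one such root,
$$
t(u) = \left( \frac{-G_\beta(u)}{H_\alpha(u)} \right)^{1/(p-q)} > 0,
$$
which gives the existence and uniqueness of the extremum point of $E_{\alpha,\beta}(tu)$ in $t>0$.

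Next I would verify that $t(u)u \in \mathcal{N}_{\alpha,\beta}$. By the chain rule and homogeneity, $\langle E_{\alpha,\beta}'(tu), tu\rangle = t\,\phi'(t)$, which vanishes at $t = t(u)$; equivalently, one checks directly from the defining relation $t(u)^{p-q} H_\alpha(u) = -G_\beta(u)$ that
$$
H_\alpha(t(u)u) + G_\beta(t(u)u) = t(u)^p H_\alpha(u) + t(u)^q G_\beta(u) = 0.
$$
Since $t(u)u \neq 0$, this is exactly the membership condition for $\mathcal{N}_{\alpha,\beta}$.

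Finally, for the ``in particular'' part I would assume \eqref{eq:prop11}, i.e. $G_\beta(u) < 0 < H_\alpha(u)$. Then the factor $t^{p-q}H_\alpha(u) + G_\beta(u)$ in $\phi'(t)$ is negative for $t$ close to $0$ and tends to $+\infty$ as $t \to +\infty$, so $\phi$ is strictly decreasing on $(0, t(u))$ and strictly increasing on $(t(u),+\infty)$; hence $t(u)$ is the unique minimum point of $E_{\alpha,\beta}(tu)$ on $(0,+\infty)$. Moreover, since $q < p$, the term $\tfrac{t^q}{q}G_\beta(u) < 0$ dominates for small $t>0$, so $\phi(t) < 0$ for all sufficiently small $t>0$, and by the monotonicity just established $E_{\alpha,\beta}(t(u)u) = \phi(t(u)) < 0$. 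I do not expect any genuine obstacle here; the only point needing a little care is to keep track of the sign of $H_\alpha(u)$: in the complementary case $H_\alpha(u) < 0 < G_\beta(u)$ the same computation shows that $t(u)$ is a \emph{maximum} of $E_{\alpha,\beta}(tu)$ rather than a minimum, which is precisely why the second assertion of the proposition requires the additional hypothesis \eqref{eq:prop11} and not merely $H_\alpha(u)\cdot G_\beta(u)<0$.
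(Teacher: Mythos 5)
Your proof is correct and follows essentially the same route as the paper: both reduce to the fibering map $t\mapsto \frac{t^p}{p}H_\alpha(u)+\frac{t^q}{q}G_\beta(u)$ and the explicit formula $t(u)=\left(-G_\beta(u)/H_\alpha(u)\right)^{1/(p-q)}$. The only (immaterial) differences are that the paper certifies the minimum via the second derivative and gets $E_{\alpha,\beta}(t(u)u)=\frac{p-q}{pq}G_\beta(t(u)u)<0$ from the Nehari constraint, whereas you use the sign of the first derivative and the behaviour of $\phi$ near $t=0$.
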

\begin{proof}
Fix some non-trivial function $u \in \W$ and consider the fibered functional corresponding to $E_{\alpha, \beta}(u)$:
\begin{align*}
E_{\alpha, \beta}(t u) 
&= \frac{1}{p} H_\alpha(t u) + \frac{1}{q} G_\beta(t u) \\
&= \frac{t^p}{p} H_\alpha(u) + \frac{t^q}{q} G_\beta(u), \quad t > 0.
\end{align*}
Under the assumption $H_\alpha(u) \cdot G_\beta(u) < 0$ the equation
$$
\frac{d}{dt} E_{\alpha, \beta}(t u) = t^{p-1} H_\alpha(u) + t^{q-1} G_\beta(u) = 0, \quad t > 0,
$$
is satisfied for unique $t > 0$ given by
$$
t = t(u) = \left(\frac{-G_\beta(u)}{H_\alpha(u)} \right)^{\frac{1}{p-q}} > 0.
$$
This implies that 
$$
\left< E'_{\alpha, \beta}(t(u) u), t(u)u \right> = t(u) \cdot \left.\frac{d}{dt} E_{\alpha, \beta}(t u)\right|_{t = t(u)} = 0,
$$
and hence $t(u) u \in \mathcal{N}_{\alpha, \beta}$. Moreover, recalling that $q < p$, if \eqref{eq:prop11} holds, then
$$
E_{\alpha, \beta}(t(u) u) = \frac{1}{p} H_\alpha(t(u) u) + \frac{1}{q} G_\beta(t(u) u) = \frac{p-q}{p q} G_\beta(t(u) u) < 0,
$$
and
\begin{align*}
&\left.\frac{d^2}{dt^2} E_{\alpha, \beta}(t u)\right|_{t = t(u)} = (p-1) t(u)^{p-2} H_\alpha(u) + (q-1) t(u)^{q-2} G_\beta(u) \\
&= \frac{1}{t(u)^2} \left( (p-1) H_\alpha(t(u) u) + (q-1) G_\beta(t(u) u) \right) 
= \frac{q - p}{t(u)^2} \, G_\beta(t(u) u) > 0,
\end{align*}
which implies that $t(u)$ is a minimum point of $E_{\alpha, \beta}(t u)$ w.r.t. $t > 0$.
\end{proof}

\begin{lemma}
\label{lemm:nonempty}
Assume that {\bf (LI)} is satisfied. Then there exists $\varepsilon_0 > 0$ such that $\mathcal{N}_{\lambda_1(p)+\varepsilon, \lambda_1(q)+\varepsilon} \neq \emptyset$ for all $\varepsilon \in (0, \varepsilon_0)$.
Moreover, there exists $u \in \mathcal{N}_{\lambda_1(p)+\varepsilon, \lambda_1(q)+\varepsilon}$, such that $E_{\lambda_1(p)+\varepsilon, \lambda_1(q)+\varepsilon}(u) < 0$.
\end{lemma}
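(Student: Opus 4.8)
~\textbf{The plan.}
The goal is to find, for small $\varepsilon > 0$, a function $u$ with $G_{\lambda_1(q)+\varepsilon}(u) < 0 < H_{\lambda_1(p)+\varepsilon}(u)$, since then Proposition~\ref{prop:minpoint} immediately produces $t(u) u \in \mathcal{N}_{\lambda_1(p)+\varepsilon, \lambda_1(q)+\varepsilon}$ with negative energy. The natural candidate to make $G_{\lambda_1(q)+\varepsilon}$ negative is $\varphi_q$, for which $\|\nabla \varphi_q\|_q^q = \lambda_1(q) \|\varphi_q\|_q^q$, so that $G_{\lambda_1(q)+\varepsilon}(\varphi_q) = -\varepsilon \|\varphi_q\|_q^q < 0$ for every $\varepsilon > 0$. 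The trouble is that $H_{\lambda_1(p)+\varepsilon}(\varphi_q) = \|\nabla \varphi_q\|_p^p - (\lambda_1(p)+\varepsilon)\|\varphi_q\|_p^p$, and there is no reason this is positive --- in fact it equals $(\|\nabla\varphi_q\|_p^p/\|\varphi_q\|_p^p - \lambda_1(p) - \varepsilon)\|\varphi_q\|_p^p = (s^*_+ - s^* - \varepsilon)\|\varphi_q\|_p^p$ in the notation of the paper, which under~\textbf{(LI)} has $s^*_+ > s^*$, so it \emph{is} positive provided $\varepsilon < s^*_+ - s^*$. So in fact $u = \varphi_q$ already works, and one may take $\varepsilon_0 = s^*_+ - s^*$.

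Concretely, the steps I would carry out are: first, set $\varepsilon_0 := s^*_+ - s^* > 0$, which is strictly positive precisely because~\textbf{(LI)} holds (this is the ``$s^* = s^*_+$ iff~\textbf{(LI)} fails'' remark in the excerpt, together with $s^* \le s^*_+$ always). Second, compute $G_{\lambda_1(q)+\varepsilon}(\varphi_q) = \|\nabla\varphi_q\|_q^q - (\lambda_1(q)+\varepsilon)\|\varphi_q\|_q^q = -\varepsilon\|\varphi_q\|_q^q < 0$ for all $\varepsilon > 0$, using that $\varphi_q$ is the first eigenfunction of $-\Delta_q$. Third, compute
$$
H_{\lambda_1(p)+\varepsilon}(\varphi_q) = \|\nabla\varphi_q\|_p^p - (\lambda_1(p)+\varepsilon)\|\varphi_q\|_p^p = \left(\frac{\|\nabla\varphi_q\|_p^p}{\|\varphi_q\|_p^p} - \lambda_1(p) - \varepsilon\right)\|\varphi_q\|_p^p = (s^*_+ - s^* - \varepsilon)\|\varphi_q\|_p^p,
$$
which is strictly positive for every $\varepsilon \in (0, \varepsilon_0)$. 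Fourth, apply Proposition~\ref{prop:minpoint} with $u = \varphi_q$: inequality~\eqref{eq:prop11} holds, so there is a unique $t(\varphi_q) > 0$ with $t(\varphi_q)\varphi_q \in \mathcal{N}_{\lambda_1(p)+\varepsilon, \lambda_1(q)+\varepsilon}$, in particular the Nehari manifold is nonempty, and moreover $E_{\lambda_1(p)+\varepsilon, \lambda_1(q)+\varepsilon}(t(\varphi_q)\varphi_q) < 0$, which gives both assertions of the lemma at once.

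\textbf{Main obstacle.}
There is essentially no analytic obstacle here --- the work is purely a bookkeeping verification that~\textbf{(LI)} forces the strict inequality $\|\nabla\varphi_q\|_p^p/\|\varphi_q\|_p^p > \lambda_1(p)$, i.e.\ $s^*_+ > s^*$. The only point requiring a moment's care is why $\varphi_q$ is \emph{not} an eigenfunction of $-\Delta_p$: if it were, then $\|\nabla\varphi_q\|_p^p/\|\varphi_q\|_p^p$ would be an eigenvalue of $-\Delta_p$, and since $\varphi_q > 0$ in $\Omega$ and $\lambda_1(p)$ is the only eigenvalue of $-\Delta_p$ with a positive eigenfunction, we would get $\varphi_q \equiv k\varphi_p$ for some $k \neq 0$, contradicting~\textbf{(LI)}; since $\lambda_1(p)$ is characterized as the infimum of the Rayleigh quotient and $\varphi_q$ is not a minimizer, the inequality $\|\nabla\varphi_q\|_p^p/\|\varphi_q\|_p^p > \lambda_1(p)$ is strict. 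This is exactly the content of the already-stated remark $s^* = s^*_+ \iff \neg\textbf{(LI)}$, so one can simply invoke it. Everything else is the direct homogeneity computation already performed inside the proof of Proposition~\ref{prop:minpoint}.
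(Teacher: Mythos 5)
Your proposal is correct and follows essentially the same route as the paper: both test with $\varphi_q$, use the simplicity of $\lambda_1(p)$ together with \textbf{(LI)} to get $H_{\lambda_1(p)}(\varphi_q)>0$ while $G_{\lambda_1(q)}(\varphi_q)=0$, and then invoke Proposition~\ref{prop:minpoint}. The only (harmless) difference is that you make the admissible range explicit, $\varepsilon_0 = s^*_+ - s^*$, where the paper simply takes $\varepsilon_0$ sufficiently small by continuity.
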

\begin{proof}
Since  {\bf (LI)} is satisfied and due to the simplicity of $\lambda_1(p)$, we have $H_{\lambda_1(p)}(\varphi_q) \neq 0$, which yields $H_{\lambda_1(p)}(\varphi_q) > 0$. At the same time, $G_{\lambda_1(q)}(\varphi_q) = 0$ by the definition of $\lambda_1(q)$. Hence, there exists sufficiently small $\varepsilon_0 > 0$ such that for all $\varepsilon \in (0, \varepsilon_0)$ it still holds $H_{\lambda_1(p)+\varepsilon}(\varphi_q) > 0$. Moreover $G_{\lambda_1(q)+\varepsilon}(\varphi_q) < 0$.
Applying now Proposition \ref{prop:minpoint} we get the desired results.
\end{proof}

\begin{lemma}
\label{lemm:crit}
Assume that $H_\alpha(u) \neq 0$ or $G_\beta(u)\not=0$.
If $u \in \mathcal{N}_{\alpha, \beta}$ is a critical point of $E_{\alpha, \beta}$ on $\mathcal{N}_{\alpha, \beta}$, then $u$ is a critical
 point of $E_{\alpha, \beta}$ on $\W$.
\end{lemma}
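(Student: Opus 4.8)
The plan is to invoke the Lagrange multiplier rule on the constraint manifold $\mathcal{N}_{\alpha,\beta}$. Set $\Phi(u) := \left< E_{\alpha,\beta}'(u), u \right> = H_\alpha(u) + G_\beta(u)$, so that $\mathcal{N}_{\alpha,\beta} = \{ u \in \W \setminus \{0\} : \Phi(u) = 0 \}$ and $\Phi \in C^1(\W, \mathbb{R})$.

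First I would compute $\left< \Phi'(u), u \right>$. Since $H_\alpha$ is positively homogeneous of degree $p$ and $G_\beta$ of degree $q$, Euler's identity gives $\left< H_\alpha'(u), u \right> = p\, H_\alpha(u)$ and $\left< G_\beta'(u), u \right> = q\, G_\beta(u)$, hence $\left< \Phi'(u), u \right> = p\, H_\alpha(u) + q\, G_\beta(u)$. For $u \in \mathcal{N}_{\alpha,\beta}$ one has $G_\beta(u) = -H_\alpha(u)$, and therefore $\left< \Phi'(u), u \right> = (p-q)\, H_\alpha(u)$.

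Next, I would observe that the hypothesis ($H_\alpha(u) \neq 0$ or $G_\beta(u) \neq 0$), together with the Nehari relation $H_\alpha(u) + G_\beta(u) = 0$, forces $H_\alpha(u) \neq 0$ (and simultaneously $G_\beta(u) \neq 0$). As $p \neq q$, this yields $\left< \Phi'(u), u \right> = (p-q)\, H_\alpha(u) \neq 0$; in particular $\Phi'(u) \neq 0$, so $\mathcal{N}_{\alpha,\beta}$ is, in a neighbourhood of $u$, a $C^1$-submanifold of $\W$ of codimension one. Since $u$ is a critical point of $E_{\alpha,\beta}$ restricted to $\mathcal{N}_{\alpha,\beta}$, the Lagrange multiplier rule then provides $\mu \in \mathbb{R}$ with $E_{\alpha,\beta}'(u) = \mu\, \Phi'(u)$ in $(\W)^*$. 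Testing this identity against $u$ and using $u \in \mathcal{N}_{\alpha,\beta}$ gives $0 = \left< E_{\alpha,\beta}'(u), u \right> = \mu \left< \Phi'(u), u \right> = \mu\,(p-q)\, H_\alpha(u)$, and since $(p-q)\, H_\alpha(u) \neq 0$ we conclude $\mu = 0$, i.e. $E_{\alpha,\beta}'(u) = 0$, which is the asserted conclusion.

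The only point requiring care — the main obstacle, such as it is — is the non-degeneracy $\Phi'(u) \neq 0$, needed for the Lagrange multiplier theorem to apply and for $\mathcal{N}_{\alpha,\beta}$ to be a genuine $C^1$-manifold near $u$; but this reduces entirely to the elementary homogeneity computation above combined with the hypothesis on $H_\alpha(u)$ and $G_\beta(u)$, which is precisely the reason that hypothesis appears in the statement.
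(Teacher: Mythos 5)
Your proposal is correct and follows essentially the same route as the paper: compute $\left<\Phi'(u),u\right> = pH_\alpha(u)+qG_\beta(u) = (p-q)H_\alpha(u) \neq 0$ via homogeneity and the Nehari constraint, apply the Lagrange multiplier rule, and test against $u$ to force $\mu=0$. No discrepancies to report.
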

\begin{proof}
Let $u \in \mathcal{N}_{\alpha, \beta}$ be a critical point of $E_{\alpha, \beta}$ on $\mathcal{N}_{\alpha, \beta}$. 
Since we are assuming that $H_\alpha(u) \neq 0$ 
or $G_\beta(u)\not=0$, $u$ satisfies 
$$
\left< \left(H_\alpha(u) + G_\beta(u)\right)', u \right>
= p H_\alpha(u) + q G_\beta(u) 
=(p-q)H_\alpha(u)=(q-p)G_\beta(u) \neq 0, 
$$
where we used the fact that $H_\alpha(u) + G_\beta(u) = 0$ for $u \in \mathcal{N}_{\alpha,\beta}$. 
This implies that $\left(H_\alpha(u) + G_\beta(u)\right)' \neq 0$ in $W_0^{1,p}(\Omega)^*$. 
Due to the Lagrange multiplier rule (see, e.g. \cite[Theorem 48.B and Corollary 48.10]{zeidler}), there exists $\mu\in\mathbb{R}$ such that 
$$
\left< E_{\alpha, \beta}'(u), \xi \right> = \mu 
\left< \left(H_\alpha(u) + G_\beta(u)\right)', \xi \right>
$$
for each $\xi \in \W$.
Taking $\xi = u$  we get
$$
0= \left< E_{\alpha, \beta}'(u), u \right> = 
\mu \left( p H_\alpha(u) + q G_\beta(u) \right)
=\mu(p-q)H_\alpha(u)=\mu(q-p)G_\beta(u),
$$
since $H_\alpha(u) + G_\beta(u) = 0$. 
Therefore $\mu= 0$ and
$$
\left< E_{\alpha, \beta}'(u), \xi \right> = 0 \quad \mbox{for all } \xi \in \W,
$$
i.e. $u$ is a critical point of $E_{\alpha, \beta}$ on $\W$.
\end{proof}

If $u\in \mathcal{N}_{\lambda_1(p) + \varepsilon, \lambda_1(q) + \varepsilon}$ 
is a minimizer of 
$E_{\lambda_1(p) + \varepsilon, \lambda_1(q) + \varepsilon}$ on the Nehari manifold $\mathcal{N}_{\lambda_1(p) + \varepsilon, \lambda_1(q) + \varepsilon}$ 
and it satisfies $H_{\lambda_1(p) + \varepsilon}(u) \neq 0$ (or equivalently $G_{\lambda_1(q) + \varepsilon}(u) \neq 0$), then $u$ is a critical point of $E_{\lambda_1(p) + \varepsilon, \lambda_1(q) + \varepsilon}$ by Lemma \ref{lemm:crit}, i.e. $u$ is a solution of 
$(GEV;\lambda_1(p) + \varepsilon, \lambda_1(q) + \varepsilon)$.

Since Lemma \ref{lemm:nonempty} implies the existence of $\varepsilon_0 > 0$ such that $\mathcal{N}_{\lambda_1(p) + \varepsilon, \lambda_1(q) + \varepsilon} \neq \emptyset$ for every $\varepsilon \in (0, \varepsilon_0)$, we can find a corresponding minimization sequence $\{ u_k^\varepsilon \}_{k=1}^{\infty} \in \mathcal{N}_{\lambda_1(p) + \varepsilon, \lambda_1(q) + \varepsilon}$, namely, 
$$
E_{\lambda_1(p) + \varepsilon, \lambda_1(q) + \varepsilon}(u_k^\varepsilon) 
\to \inf\left\{
E_{\lambda_1(p) + \varepsilon, \lambda_1(q) + \varepsilon}(u)\,:\, 
u\in 
\mathcal{N}_{\lambda_1(p) + \varepsilon, \lambda_1(q) + \varepsilon}\right\}
=:M_\varepsilon 
$$
as $k\to \infty$. 
The following result states that this minimization sequence is bounded for any sufficiently small $\varepsilon > 0$, and so $M_\varepsilon>-\infty$ holds, 
since $E_{\lambda_1(p) + \varepsilon, \lambda_1(q) + \varepsilon}$ 
is bounded on bounded sets.

\begin{lemma}\label{lem:bdd-below-on-Nehari} 
Assume that {\bf (LI)} is satisfied.
Then there exist $\varepsilon_1 > 0$ and $C > 0$ such that 
$\|\nabla u_k^\varepsilon\|_p \leq C$ for all $k \in \mathbb{N}$ and $\varepsilon \in (0, \varepsilon_1)$. 
\end{lemma}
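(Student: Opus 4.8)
The plan is to argue by contradiction: from an unbounded subsequence I would blow up and normalize in $W_0^{1,p}$, and then identify the weak limit simultaneously as a first eigenfunction of $-\Delta_p$ and of $-\Delta_q$, which is impossible under {\bf (LI)}.

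First I would record two facts valid on the Nehari manifold. By Lemma~\ref{lemm:nonempty} we have $M_\varepsilon<0$ for $\varepsilon$ small, so after discarding the finitely many terms with $E_{\lambda_1(p)+\varepsilon,\lambda_1(q)+\varepsilon}(u_k^\varepsilon)\ge 0$ we may assume $E_{\lambda_1(p)+\varepsilon,\lambda_1(q)+\varepsilon}(u_k^\varepsilon)<0$ for all $k$; and since on $\mathcal{N}_{\alpha,\beta}$ one has $E_{\alpha,\beta}(u)=\frac{p-q}{pq}G_\beta(u)$ and $H_\alpha(u)=-G_\beta(u)$ (as computed in the proof of Proposition~\ref{prop:minpoint}), this forces $G_{\lambda_1(q)+\varepsilon}(u_k^\varepsilon)<0<H_{\lambda_1(p)+\varepsilon}(u_k^\varepsilon)$. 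Suppose now the lemma fails. Then there exist $\varepsilon_j\to 0^+$ and functions $u_j:=u_{k_j}^{\varepsilon_j}$ with $\|\nabla u_j\|_p\to\infty$; set $\alpha_j=\lambda_1(p)+\varepsilon_j$, $\beta_j=\lambda_1(q)+\varepsilon_j$, and, replacing $u_j$ by $|u_j|$, assume $u_j\ge 0$. I would first check $\|u_j\|_p\to\infty$: if $\{\|u_j\|_p\}$ were bounded, then $H_{\alpha_j}(u_j)=\|\nabla u_j\|_p^p-\alpha_j\|u_j\|_p^p\to+\infty$, contradicting $H_{\alpha_j}(u_j)=-G_{\beta_j}(u_j)\le\beta_j\|u_j\|_q^q$, which stays bounded since $\|u_j\|_q\le|\Omega|^{1/q-1/p}\|u_j\|_p$ and $\beta_j\to\lambda_1(q)$.

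Next, with $w_j:=u_j/\|u_j\|_p$ I would show $\{w_j\}$ is bounded in $W_0^{1,p}$. Indeed $0<H_{\alpha_j}(u_j)=-G_{\beta_j}(u_j)\le\beta_j\|u_j\|_q^q\le\beta_j|\Omega|^{1-q/p}\|u_j\|_p^q$, so $\|\nabla w_j\|_p^p=\alpha_j+H_{\alpha_j}(u_j)/\|u_j\|_p^p=\alpha_j+O(\|u_j\|_p^{\,q-p})\to\lambda_1(p)$, using $q<p$. Passing to a subsequence, $w_j\rightharpoonup w$ in $W_0^{1,p}$, hence weakly in $W_0^{1,q}$, and $w_j\to w$ in $L^p(\Omega)$ and $L^q(\Omega)$, so $\|w\|_p=1$ and $w\not\equiv 0$. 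Weak lower semicontinuity and the Poincar\'e inequality then give $\|\nabla w\|_p^p\le\liminf\|\nabla w_j\|_p^p=\lambda_1(p)=\lambda_1(p)\|w\|_p^p\le\|\nabla w\|_p^p$, so equality holds in Poincar\'e's inequality and, by simplicity of $\lambda_1(p)$, $w=\varphi_p/\|\varphi_p\|_p$. On the other hand, dividing $\|\nabla u_j\|_q^q<\beta_j\|u_j\|_q^q$ by $\|u_j\|_p^q$ yields $\|\nabla w_j\|_q^q<\beta_j\|w_j\|_q^q$, and in the limit $\|\nabla w\|_q^q\le\liminf\|\nabla w_j\|_q^q\le\limsup\beta_j\|w_j\|_q^q=\lambda_1(q)\|w\|_q^q$; with Poincar\'e's inequality for $q$ this forces $\|\nabla w\|_q^q=\lambda_1(q)\|w\|_q^q$, hence $w=c\varphi_q$ for some $c\ne 0$. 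Therefore $\varphi_p\equiv c\|\varphi_p\|_p\,\varphi_q$ in $\Omega$, contradicting {\bf (LI)}. This contradiction establishes the uniform bound, with $\varepsilon_1\in(0,\varepsilon_0)$.

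The delicate point is showing that $w_j=u_j/\|u_j\|_p$ is bounded in $W_0^{1,p}$, equivalently $H_{\alpha_j}(u_j)/\|u_j\|_p^p\to 0$: this rules out concentration-type blow-up and relies precisely on the negativity $G_{\beta_j}(u_j)<0$ inherited from $E_{\alpha_j,\beta_j}(u_j)<0$ together with subcriticality $q<p$. The remaining ingredients are weak compactness of $W_0^{1,p}$, compactness of its embeddings into $L^p$ and $L^q$, and the Poincar\'e inequality.
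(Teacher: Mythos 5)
Your proof is correct and follows essentially the same strategy as the paper's: argue by contradiction, normalize the blowing-up sequence, and use weak lower semicontinuity, the Poincar\'e inequality, and simplicity of the first eigenvalues to identify the weak limit as a simultaneous first eigenfunction of $-\Delta_p$ and $-\Delta_q$, contradicting \textbf{(LI)}. The only difference is cosmetic: you normalize by $\|u_j\|_p$ (after first checking $\|u_j\|_p\to\infty$), which makes non-triviality of the limit automatic, whereas the paper normalizes by $\|\nabla u_j\|_p$ and recovers non-triviality from $\lambda_1(p)\|v^*\|_p^p=1$; both rely on the same Nehari identity $H=-G$ and on $q<p$ to kill the correction term.
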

\begin{proof}
Notice that for $\varepsilon \in (0, \varepsilon_0)$ Lemma \ref{lemm:nonempty} implies
$M_\varepsilon < 0$. Hence, considering sufficiently large $k \in \mathbb{N}$, we may assume that $G_{\lambda_1(q)+\varepsilon}(u_k^\varepsilon) < 0$ by noting that for $u_k^\varepsilon \in \mathcal{N}_{\lambda_1(p) + \varepsilon, \lambda_1(q) + \varepsilon}$ it holds
\begin{equation}
\label{eq:energy<0}
\frac{p-q}{p q} G_{\lambda_1(q)+\varepsilon}(u_k^\varepsilon) = E_{\lambda_1(p)+\varepsilon, \lambda_1(q)+\varepsilon}(u_k^\varepsilon)
\end{equation}
with $q < p$. Consequently, we also get that $H_{\lambda_1(p)+\varepsilon}(u_k^\varepsilon) > 0$ for such $k \in \mathbb{N}$ and $\varepsilon \in (0, \varepsilon_0)$.

Suppose the assertion of the lemma is false. Then for any $m \in \mathbb{N}$ there exist $\varepsilon(m) \in (0, 1/m)$ and $k(m) \in \mathbb{N}$ such that for $u_m := u_{k(m)}^{\varepsilon(m)} \in \mathcal{N}_{\lambda_1(p) + \varepsilon(m), \lambda_1(q) + \varepsilon(m)}$ it holds
$\|\nabla u_m\|_{p} > m$.
Consider the normalized sequence $\{ v_m \}_{m=1}^{\infty}$, such that $u_m = t_m v_m$, $t_m = \|\nabla u_m\|_{p} > m$ and $\|\nabla v_m\|_{p} = 1$.
Then the Eberlein-Shmulyan theorem and the Sobolev embedding theorem imply the existence of a subsequence of $\{v_m\}_{m = 1}^{\infty}$ (which we denote again $\{v_m\}_{m=1}^{\infty}$) and $v^* \in \W$ such that
\begin{align*}
&v_m \rightharpoonup v^* ~\text{ weakly in }~ \W ~\text{ and }~ W_0^{1,q} ~\text{ as $m \to \infty$}, \\
&v_m \rightarrow v^* ~\text{ strongly in }~ L^p(\Omega) ~\text{ and } ~ L^{q}(\Omega) ~\text{ as $m \to \infty$}.
\end{align*}
Moreover, by weakly lower semicontinuity of the norms of $\W$ and $W_0^{1,q}$ we have
\begin{equation}
\label{eq:wls}
\|\nabla v^*\|_p \leq \liminf_{m \to \infty} \|\nabla v_m \|_p, \quad
\|\nabla v^*\|_q \leq \liminf_{m \to \infty} \|\nabla v_m\|_q.
\end{equation}
Since $H_{\lambda_1(p) + \varepsilon(m)}(u_m) = -
G_{\lambda_1(q) + \varepsilon(m)}(u_m)$ for all $m \in \mathbb{N}$, we have
$$
t_m^{p-q} \left|H_{\lambda_1(p) + \varepsilon(m)}(v_m) \right| =
\left| G_{\lambda_1(q) + \varepsilon(m)}(v_m) \right| \le C_1 < +\infty
$$
for some constant $C_1$ uniformly w.r.t. $m \in \mathbb{N}$, 
because $G_{\lambda_1(q) + \varepsilon}$ is bounded on bounded sets and 
$\varepsilon(m)\to 0$. Therefore, taking into account that $t_m \to \infty$, we conclude that $H_{\lambda_1(p) + \varepsilon(m)}(v_m)\to 0$ as $m \to \infty$. Using this fact, \eqref{eq:wls}, and recalling that $G_{\lambda_1(q)+\varepsilon(m)}(v_m)<0$ for all $m \in \mathbb{N}$, we deduce
\begin{align}
\label{eq:conv1}
H_{\lambda_1(p)}(v^*) &\leq \liminf_{m \to \infty} H_{\lambda_1(p) + \varepsilon(m)}(v_m) = 0, \\
\label{eq:conv2}
G_{\lambda_1(q)}(v^*) &\leq \liminf_{m \to \infty} G_{\lambda_1(q) + \varepsilon(m)}(v_m) \leq 0.
\end{align}
Noting that $v_m\to v^*$ in $L^p(\Omega)$ and 
$H_{\lambda_1(p) + \varepsilon(m)}(v_m)= 
1-(\lambda_1(p) + \varepsilon(m))\|v_m\|_p^p$, we get
\begin{align}
\notag
\lambda_1(p) \int_\Omega |v^*|^p \,dx  
&= \limsup_{m \to \infty} (\lambda_1(p) + \varepsilon(m)) \int_\Omega |v_m|^p \,dx \\
&= 1-\liminf_{m\to \infty}H_{\lambda_1(p) + \varepsilon(m)}(v_m)
=1,
\label{eq:nonzero}
\end{align}
which implies that $v^* \not\equiv 0$.
At the same time, in view of \eqref{eq:conv1} and \eqref{eq:conv2}, the Poincar\'e inequality yields
\begin{align*}
H_{\lambda_1(p)}(v^*) = 0 
\quad \text{and} \quad
G_{\lambda_1(q)}(v^*) = 0,
\end{align*}
and therefore from the simplicity of the first eigenvalues $\lambda_1(p)$ and $\lambda_1(q)$ we must have $|v^*| = \varphi_p/\|\nabla \varphi_p\|$ 
and $|v^*| = \varphi_q/\|\nabla \varphi_q\|$ simultaneously. However, it contradicts {\bf (LI)}.
\end{proof}

From Lemma \ref{lem:bdd-below-on-Nehari} it follows that there exist non-trivial weak limits $u_0^\varepsilon \in \W$ of the corresponding minimization subsequences $\{ u_k^\varepsilon \}_{k=1}^{\infty} \in \mathcal{N}_{\lambda_1(p) + \varepsilon, \lambda_1(q) + \varepsilon}$ for any $\varepsilon \in (0, \varepsilon_1)$.
\begin{lemma}
\label{lem:H>0}
Assume that {\bf (LI)} is satisfied.
Then there exists $\varepsilon_2 > 0$ such that
\begin{align}
\label{eq:lem21}
G_{\lambda_1(q) + \varepsilon}(u_0^\varepsilon) < 0 < 
H_{\lambda_1(p) + \varepsilon}(u_0^\varepsilon) 
\end{align}
for all  $\varepsilon \in (0, \varepsilon_2)$.
\end{lemma}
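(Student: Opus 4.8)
The plan is to establish the two inequalities in \eqref{eq:lem21} separately: the lower bound $G_{\lambda_1(q)+\varepsilon}(u_0^\varepsilon) < 0$ by a direct lower-semicontinuity argument, and the more delicate $H_{\lambda_1(p)+\varepsilon}(u_0^\varepsilon) > 0$ by a contradiction argument in the spirit of Lemma \ref{lem:bdd-below-on-Nehari}. Set $\varepsilon_* := \min\{\varepsilon_0, \varepsilon_1\}$, with $\varepsilon_0, \varepsilon_1$ from Lemmas \ref{lemm:nonempty} and \ref{lem:bdd-below-on-Nehari}, so that for every $\varepsilon \in (0, \varepsilon_*)$ the minimizing sequence $\{u_k^\varepsilon\} \subset \mathcal{N}_{\lambda_1(p)+\varepsilon, \lambda_1(q)+\varepsilon}$ is bounded, $M_\varepsilon < 0$, and the non-trivial weak limit $u_0^\varepsilon$ exists (passing to the relevant subsequence, $u_k^\varepsilon \rightharpoonup u_0^\varepsilon$ weakly in $W_0^{1,p}$ and $W_0^{1,q}$, and $u_k^\varepsilon \to u_0^\varepsilon$ strongly in $L^p(\Omega)$ and $L^q(\Omega)$).

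For the first inequality, by \eqref{eq:energy<0} we have $G_{\lambda_1(q)+\varepsilon}(u_k^\varepsilon) = \tfrac{pq}{p-q} E_{\lambda_1(p)+\varepsilon, \lambda_1(q)+\varepsilon}(u_k^\varepsilon) \to \tfrac{pq}{p-q} M_\varepsilon < 0$. Using weak lower semicontinuity of $u \mapsto \|\nabla u\|_q^q$ together with the strong convergence $u_k^\varepsilon \to u_0^\varepsilon$ in $L^q(\Omega)$, we obtain
$$
G_{\lambda_1(q)+\varepsilon}(u_0^\varepsilon) \le \liminf_{k\to\infty} G_{\lambda_1(q)+\varepsilon}(u_k^\varepsilon) = \frac{pq}{p-q}\, M_\varepsilon < 0
$$
for all $\varepsilon \in (0, \varepsilon_*)$; no additional smallness of $\varepsilon$ is required here.

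For the second inequality, suppose it fails: then there is a sequence $\varepsilon_m \to 0^+$ with $\varepsilon_m \in (0, \varepsilon_*)$ and $H_{\lambda_1(p)+\varepsilon_m}(u_0^{\varepsilon_m}) \le 0$, while $G_{\lambda_1(q)+\varepsilon_m}(u_0^{\varepsilon_m}) < 0$ by the first part. Put $w_m := u_0^{\varepsilon_m}/\|\nabla u_0^{\varepsilon_m}\|_p$ (well defined since $u_0^{\varepsilon_m} \not\equiv 0$), so $\|\nabla w_m\|_p = 1$, and by homogeneity $H_{\lambda_1(p)+\varepsilon_m}(w_m) \le 0$ and $G_{\lambda_1(q)+\varepsilon_m}(w_m) < 0$. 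From $H_{\lambda_1(p)+\varepsilon_m}(w_m) \le 0$ we get $\|w_m\|_p^p \ge (\lambda_1(p)+\varepsilon_m)^{-1}$, while the Poincaré inequality gives $\|w_m\|_p^p \le \lambda_1(p)^{-1}$; hence $\|w_m\|_p^p \to \lambda_1(p)^{-1}$. Passing to a subsequence, $w_m \rightharpoonup w^*$ weakly in $W_0^{1,p}$ and $W_0^{1,q}$ and $w_m \to w^*$ strongly in $L^p(\Omega)$ and $L^q(\Omega)$; then $\|w^*\|_p^p = \lambda_1(p)^{-1} > 0$, so $w^* \not\equiv 0$, and weak lower semicontinuity gives $H_{\lambda_1(p)}(w^*) \le 1 - \lambda_1(p)\|w^*\|_p^p = 0$, whence $H_{\lambda_1(p)}(w^*) = 0$ by Poincaré; by simplicity of $\lambda_1(p)$, $|w^*|$ is a positive multiple of $\varphi_p$.

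On the other hand, since $\varepsilon_m \to 0$, $w_m \to w^*$ in $L^q(\Omega)$, and $G_{\lambda_1(q)+\varepsilon_m}(w_m) < 0$,
$$
\|\nabla w^*\|_q^q \le \liminf_{m\to\infty} \|\nabla w_m\|_q^q \le \liminf_{m\to\infty} (\lambda_1(q)+\varepsilon_m)\|w_m\|_q^q = \lambda_1(q)\|w^*\|_q^q,
$$
so $G_{\lambda_1(q)}(w^*) \le 0$, hence $G_{\lambda_1(q)}(w^*) = 0$ by Poincaré, and by simplicity of $\lambda_1(q)$, $|w^*|$ is also a positive multiple of $\varphi_q$. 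Thus $\varphi_p$ and $\varphi_q$ are linearly dependent, contradicting {\bf (LI)}. Therefore no such sequence exists, so there is $\varepsilon_2 \in (0, \varepsilon_*]$ for which $H_{\lambda_1(p)+\varepsilon}(u_0^\varepsilon) > 0$ on $(0,\varepsilon_2)$, and then \eqref{eq:lem21} holds on $(0,\varepsilon_2)$. The main obstacle is exactly this second step: weak lower semicontinuity only bounds $H_{\lambda_1(p)+\varepsilon}(u_0^\varepsilon)$ from above, so its strict positivity cannot be read off directly and must be extracted through the compactness-plus-{\bf (LI)} contradiction, just as in Lemma \ref{lem:bdd-below-on-Nehari}.
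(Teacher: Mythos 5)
Your proof is correct and follows essentially the same route as the paper: the first inequality via \eqref{eq:energy<0} and weak lower semicontinuity, and the second by the normalization-plus-compactness contradiction argument that forces $|w^*|$ to be simultaneously a multiple of $\varphi_p$ and $\varphi_q$, violating {\bf (LI)}. Your explicit verification that $\|w^*\|_p^p = \lambda_1(p)^{-1} > 0$ (hence $w^* \not\equiv 0$) fills in a step the paper leaves implicit by referring back to Lemma \ref{lem:bdd-below-on-Nehari}.
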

\begin{proof}
Let $\varepsilon_1 > 0$ be given by Lemma \ref{lem:bdd-below-on-Nehari} and $\varepsilon \in(0,\varepsilon_1)$. Note that from \eqref{eq:energy<0} and weakly lower semicontinuity of the norm of $W_0^{1,q}$ it follows that $G_{\lambda_1(q) + \varepsilon}(u_0^\varepsilon) < 0$. 
Therefore, we need to show only that $H_{\lambda_1(p) + \varepsilon}(u_0^\varepsilon) > 0$ for sufficiently small $\varepsilon>0$.

To obtain a contradiction, suppose that for any $m \in \mathbb{N}$ there exists $\varepsilon(m) < 1/m$ such that
$H_{\lambda_1(p) + \varepsilon(m)}(u_0^{\varepsilon(m)}) \leq 0$. 
We consider the normalized sequence $\{ v_m \}_{m=1}^\infty$, where $v_m = u_0^{\varepsilon(m)}/t_m$, $t_m = \|\nabla u_0^{\varepsilon(m)}\|_{p}$ 
and $\|\nabla v_m\|_{p} = 1$.
Therefore, by passing to a subsequence, there exists $v^* \in \W$ such that
\begin{align*}
&v_m \rightharpoonup v^* ~\text{ weakly in }~ \W ~\text{ and }~ W_0^{1,q} ~\text{ as $m \to \infty$ }, \\
&v_m \rightarrow v^* ~\text{ strongly in }~ L^p(\Omega) ~\text{ and } ~ L^{q}(\Omega) ~\text{ as $m \to \infty$ }.
\end{align*}
Using the assumption $H_{\lambda_1(p) + \varepsilon(m)}(u_0^{\varepsilon(m)}) \leq 0$ and weakly lower semicontinuity we have
\begin{align*}
H_{\lambda_1(p)}(v^*) &\leq \liminf_{m \to \infty} H_{\lambda_1(p) + \varepsilon(m)}(v_m) \leq 0, \\
G_{\lambda_1(q)}(v^*) &\leq \liminf_{m \to \infty} G_{\lambda_1(q) + \varepsilon(m)}(v_m) \leq 0.
\end{align*}
Hence, using the Poincar\'e inequality as in the proof of 
Lemma \ref{lem:bdd-below-on-Nehari}, we get a contradiction.
\end{proof}

Now we are able to prove the main result of this section.
\begin{proposition}
\label{prop:main}
Assume that {\bf (LI)} is satisfied. Then 
$u_0^\varepsilon \in \mathcal{N}_{\lambda_1(p) + \varepsilon, \lambda_1(q) + \varepsilon}$ and 
$$M_\varepsilon:=\inf\left\{
E_{\lambda_1(p) + \varepsilon, \lambda_1(q) + \varepsilon}(u)\,:\, 
u\in 
\mathcal{N}_{\lambda_1(p) + \varepsilon, \lambda_1(q) + \varepsilon}\right\}
$$ 
is attained on $u_0^\varepsilon$ for all $\varepsilon \in (0, \varepsilon_2)$, 
where $\varepsilon_2 > 0$ is given by Lemma \ref{lem:H>0}. 
\end{proposition}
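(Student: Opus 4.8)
The plan is to recognise $u_0^\varepsilon$ as the Nehari-fibre projection of itself: using Proposition~\ref{prop:minpoint} I would attach to $u_0^\varepsilon$ the unique $t_0>0$ with $t_0\,u_0^\varepsilon\in\mathcal{N}_{\lambda_1(p)+\varepsilon,\lambda_1(q)+\varepsilon}$, and then show, by combining weak lower semicontinuity along the minimising sequence with the minimality of $M_\varepsilon$, both that $t_0=1$ (so $u_0^\varepsilon$ itself lies on the Nehari manifold) and that no energy is lost in the weak limit (so the infimum is attained at $u_0^\varepsilon$). Throughout fix $\varepsilon\in(0,\varepsilon_2)$ and, to lighten notation, set $\alpha:=\lambda_1(p)+\varepsilon$ and $\beta:=\lambda_1(q)+\varepsilon$ for the duration of this argument; then $-\infty<M_\varepsilon<0$ by Lemmas~\ref{lemm:nonempty} and~\ref{lem:bdd-below-on-Nehari}, and Lemma~\ref{lem:H>0} gives $G_\beta(u_0^\varepsilon)<0<H_\alpha(u_0^\varepsilon)$, so $t_0$ is well defined.

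First I would pin down the limiting values along the minimising sequence $\{u_k^\varepsilon\}\subset\mathcal{N}_{\alpha,\beta}$: by \eqref{eq:energy<0} we have $G_\beta(u_k^\varepsilon)=\tfrac{pq}{p-q}E_{\alpha,\beta}(u_k^\varepsilon)\to\tfrac{pq}{p-q}M_\varepsilon=:G_\infty<0$, hence $H_\alpha(u_k^\varepsilon)=-G_\beta(u_k^\varepsilon)\to -G_\infty>0$. Passing (by Lemma~\ref{lem:bdd-below-on-Nehari}) to the subsequence that converges weakly in $W_0^{1,p}$ and $W_0^{1,q}$ and strongly in $L^p(\Omega)$ and $L^q(\Omega)$ to $u_0^\varepsilon$, weak lower semicontinuity of the gradient norms together with the strong $L^p$- and $L^q$-convergence yields
\begin{equation*}
H_\alpha(u_0^\varepsilon)\le -G_\infty \qquad\text{and}\qquad G_\beta(u_0^\varepsilon)\le G_\infty<0 ,
\end{equation*}
so in particular $H_\alpha(u_0^\varepsilon)+G_\beta(u_0^\varepsilon)\le 0$. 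The latter forces $-G_\beta(u_0^\varepsilon)\ge H_\alpha(u_0^\varepsilon)$, hence $t_0=\bigl(-G_\beta(u_0^\varepsilon)/H_\alpha(u_0^\varepsilon)\bigr)^{1/(p-q)}\ge 1$. On the other hand, since $t_0 u_0^\varepsilon\in\mathcal{N}_{\alpha,\beta}$, minimality gives $\tfrac{p-q}{pq}\,t_0^{\,q}G_\beta(u_0^\varepsilon)=E_{\alpha,\beta}(t_0 u_0^\varepsilon)\ge M_\varepsilon=\tfrac{p-q}{pq}G_\infty$, i.e.\ $t_0^{\,q}G_\beta(u_0^\varepsilon)\ge G_\infty$; while $t_0\ge 1$ and $G_\beta(u_0^\varepsilon)\le G_\infty<0$ give the reverse inequality $t_0^{\,q}G_\beta(u_0^\varepsilon)\le G_\beta(u_0^\varepsilon)\le G_\infty$. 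Hence $t_0^{\,q}G_\beta(u_0^\varepsilon)=G_\beta(u_0^\varepsilon)=G_\infty$, which forces $t_0=1$ and $G_\beta(u_0^\varepsilon)=G_\infty$. Therefore $u_0^\varepsilon\in\mathcal{N}_{\alpha,\beta}$ and $E_{\alpha,\beta}(u_0^\varepsilon)=\tfrac{p-q}{pq}G_\beta(u_0^\varepsilon)=\tfrac{p-q}{pq}G_\infty=M_\varepsilon$, which is exactly the assertion.

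The potentially delicate points are all of analytic (compactness) nature, and they are already secured by the preceding lemmas: the uniform bound on the minimising sequence is Lemma~\ref{lem:bdd-below-on-Nehari}, the strong $L^p$- and $L^q$-convergence of the extracted subsequence comes from the compact Sobolev embeddings, and the non-triviality of $u_0^\varepsilon$ (needed for $t_0$ to make sense) follows because $G_\beta(u_k^\varepsilon)\to G_\infty<0$ keeps $\|u_k^\varepsilon\|_q$ bounded away from $0$, a property that survives the strong $L^q$-convergence. What is left is the purely one-variable analysis of $t\mapsto E_{\alpha,\beta}(t u_0^\varepsilon)$ provided by Proposition~\ref{prop:minpoint} together with the elementary sandwich above, so I do not anticipate any further obstruction.
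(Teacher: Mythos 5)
Your proof is correct and follows essentially the same route as the paper: both take the weak limit of the minimizing sequence, invoke Lemma \ref{lem:H>0} to apply Proposition \ref{prop:minpoint}, project onto the Nehari manifold via the fibering map, and use the minimality of $M_\varepsilon$ to rule out any loss of gradient norm in the weak limit. The only difference is presentational — the paper argues by contradiction (assuming strict weak lower semicontinuity and deriving $M_\varepsilon < M_\varepsilon$), while you run a direct two-sided sandwich forcing $t_0=1$ and $G_\beta(u_0^\varepsilon)=G_\infty$ — which is, if anything, slightly cleaner.
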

\begin{proof}
Fix any $\varepsilon \in (0, \varepsilon_2)$. 
Then there exists a weak limit $u_0^\varepsilon \in \W$ of the minimizing sequence $\{ u_k^\varepsilon\}_{k =1}^\infty \in \mathcal{N}_{\lambda_1(p) + \varepsilon, \lambda_1(q) + \varepsilon}$ and \eqref{eq:lem21} is satisfied. 

Let us show that $u_k^\varepsilon \to u_0^\varepsilon$ strongly in 
$\W$ and $u_0^\varepsilon \in \mathcal{N}_{\lambda_1(p) + \varepsilon, \lambda_1(q) + \varepsilon}$. Indeed, contrary to our claim, 
we suppose that 
$$
\|\nabla u_0^\varepsilon\|_p < \liminf_{k \to \infty} 
\|\nabla u_k^\varepsilon\|_p.
$$ 
Then
\begin{align*}
H_{\lambda_1(p) + \varepsilon}(u_0^\varepsilon) + G_{\lambda_1(q) + \varepsilon}(u_0^\varepsilon) &< \liminf_{k \to \infty} \left( H_{\lambda_1(p) + \varepsilon}(u_k^\varepsilon) + G_{\lambda_1(q) + \varepsilon}(u_k^\varepsilon) \right) = 0,
\end{align*}
which implies that $u_0^\varepsilon \not\in \mathcal{N}_{\lambda_1(p) + \varepsilon, \lambda_1(q) + \varepsilon}$. 
However, according to \eqref{eq:lem21}, the assumptions of Proposition \ref{prop:minpoint} are satisfied. Therefore, there exists a unique minimum point $t(u_0^\varepsilon) \neq 1$ of $E_{\lambda_1(p) + \varepsilon, \lambda_1(q) + \varepsilon}(t u_0^\varepsilon)$ w.r.t. $t > 0$, such that $t(u_0^\varepsilon) u_0^\varepsilon \in \mathcal{N}_{\lambda_1(p) + \varepsilon, \lambda_1(q) + \varepsilon}$. Hence, 
\begin{align*}
M_\varepsilon\le E_{\lambda_1(p) + \varepsilon, \lambda_1(q) + \varepsilon}(t(u_0^\varepsilon) u_0^\varepsilon) 
&< 
E_{\lambda_1(p) + \varepsilon, \lambda_1(q) + \varepsilon}(u_0^\varepsilon) 
\\
&<\liminf_{k \to \infty}
E_{\lambda_1(p) + \varepsilon, \lambda_1(q) + \varepsilon}(u_k^\varepsilon)
=M_\varepsilon,
\end{align*}
which leads to a contradiction. 
Therefore, $u_0^\varepsilon \in \mathcal{N}_{\lambda_1(p) + \varepsilon, \lambda_1(q) + \varepsilon}$ and $u_k^\varepsilon \to u_0^\varepsilon$ strongly in $\W$.
\end{proof}

\begin{lemma}
\label{lemm:well-defined}
Assume that {\bf (LI)} is satisfied. Then  
$(GEV; \lambda_1(p) + \varepsilon, \lambda_1(q) + \varepsilon)$ possesses a positive solution for all $\varepsilon \in (0, \varepsilon_2)$.
\end{lemma}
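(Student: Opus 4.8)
The plan is to assemble the machinery already built in this section. By Proposition~\ref{prop:main}, for every $\varepsilon\in(0,\varepsilon_2)$ the infimum $M_\varepsilon=\inf\{E_{\lambda_1(p)+\varepsilon,\lambda_1(q)+\varepsilon}(u):u\in\mathcal N_{\lambda_1(p)+\varepsilon,\lambda_1(q)+\varepsilon}\}$ is attained at $u_0^\varepsilon\in\mathcal N_{\lambda_1(p)+\varepsilon,\lambda_1(q)+\varepsilon}$, and by Lemma~\ref{lem:H>0} the sign condition \eqref{eq:lem21} holds. Throughout I fix such an $\varepsilon$ and abbreviate $\alpha=\lambda_1(p)+\varepsilon$, $\beta=\lambda_1(q)+\varepsilon$. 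Since $H_\alpha$, $G_\beta$, hence $E_{\alpha,\beta}$, depend on $u$ only through $|\nabla u|$ and $|u|$ (recall $|\nabla|u||=|\nabla u|$ a.e.\ in $\Omega$), we have $E_{\alpha,\beta}(|u_0^\varepsilon|)=E_{\alpha,\beta}(u_0^\varepsilon)=M_\varepsilon$, $|u_0^\varepsilon|\in\mathcal N_{\alpha,\beta}$, and $H_\alpha(|u_0^\varepsilon|)=H_\alpha(u_0^\varepsilon)$; thus, replacing $u_0^\varepsilon$ by $|u_0^\varepsilon|$ if necessary, we may and do assume $u_0^\varepsilon\ge 0$, while \eqref{eq:lem21} is preserved.

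First I would verify that $u_0^\varepsilon$ is a critical point of $E_{\alpha,\beta}$ restricted to $\mathcal N_{\alpha,\beta}$. The constraint functional $g(u):=H_\alpha(u)+G_\beta(u)$ is of class $C^1$ on $\W$, and combining the Nehari identity $H_\alpha(u_0^\varepsilon)=-G_\beta(u_0^\varepsilon)$ with \eqref{eq:lem21} gives
$$
\left< g'(u_0^\varepsilon),u_0^\varepsilon\right>=p\,H_\alpha(u_0^\varepsilon)+q\,G_\beta(u_0^\varepsilon)=(p-q)\,H_\alpha(u_0^\varepsilon)\neq 0 .
$$
Hence $g'(u_0^\varepsilon)\neq 0$, so $\mathcal N_{\alpha,\beta}$ is a $C^1$-submanifold of $\W$ in a neighbourhood of $u_0^\varepsilon$, and since $u_0^\varepsilon$ minimizes $E_{\alpha,\beta}$ over $\mathcal N_{\alpha,\beta}$, it is a critical point of $E_{\alpha,\beta}|_{\mathcal N_{\alpha,\beta}}$.

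Next, because $H_\alpha(u_0^\varepsilon)>0$ by \eqref{eq:lem21} (in particular $H_\alpha(u_0^\varepsilon)\neq 0$), Lemma~\ref{lemm:crit} applies and shows that $u_0^\varepsilon$ is a critical point of $E_{\alpha,\beta}$ on all of $\W$; as $u_0^\varepsilon\in\mathcal N_{\alpha,\beta}$ it is non-trivial, so $u_0^\varepsilon$ is a non-trivial weak solution of $(GEV;\alpha,\beta)$. Finally, $u_0^\varepsilon\ge 0$ and $u_0^\varepsilon\not\equiv 0$, so the regularity argument and the strong maximum principle recorded in Remark~\ref{rem:positive} yield $u_0^\varepsilon\in{\rm int}\,\C_+$; in particular $u_0^\varepsilon>0$ in $\Omega$, which is the desired positive solution of $(GEV;\lambda_1(p)+\varepsilon,\lambda_1(q)+\varepsilon)$.

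I do not anticipate a genuine obstacle in this lemma itself: the analytic work — boundedness of the minimizing sequence (Lemma~\ref{lem:bdd-below-on-Nehari}), non-triviality and the sign $H_{\lambda_1(p)+\varepsilon}(u_0^\varepsilon)>0$ of the weak limit (Lemma~\ref{lem:H>0}), and attainment of $M_\varepsilon$ (Proposition~\ref{prop:main}) — has already been done. The only step requiring care is the non-degeneracy of the Nehari constraint at $u_0^\varepsilon$, which upgrades ``minimizer over $\mathcal N_{\alpha,\beta}$'' to ``constrained critical point'' and is exactly what the strict inequality $H_{\lambda_1(p)+\varepsilon}(u_0^\varepsilon)>0$ from Lemma~\ref{lem:H>0} supplies; after that the conclusion follows by Lemma~\ref{lemm:crit} and Remark~\ref{rem:positive}.
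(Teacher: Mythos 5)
Your argument is correct and follows essentially the same route as the paper: take the minimizer $u_0^\varepsilon$ from Proposition~\ref{prop:main}, use evenness of the functional to assume $u_0^\varepsilon\ge 0$, invoke the sign condition \eqref{eq:lem21} from Lemma~\ref{lem:H>0} together with Lemma~\ref{lemm:crit} to pass from constrained to free critical point, and conclude positivity via Remark~\ref{rem:positive}. Your explicit verification of the non-degeneracy of the Nehari constraint is a detail the paper delegates to Lemma~\ref{lemm:crit}, but the substance is identical.
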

\begin{proof}

According to Lemma~\ref{lem:H>0} and 
Proposition~\ref{prop:main}, 
$u_0^\varepsilon \in \mathcal{N}_{\lambda_1(p) + \varepsilon, \lambda_1(q) + \varepsilon}$ satisfies \eqref{eq:lem21} and it is a minimizer of 
$E_{\lambda_1(p) + \varepsilon, \lambda_1(q) + \varepsilon}$ 
on $\mathcal{N}_{\lambda_1(p) + \varepsilon, \lambda_1(q) + \varepsilon}$ for all $\varepsilon\in (0,\varepsilon_2)$. 
Since the functional 
$E_{\lambda_1(p) + \varepsilon, \lambda_1(q) + \varepsilon}$ 
is even, we may assume that $u_0^\varepsilon \ge 0$.
Hence, due to Lemma~\ref{lemm:crit} and noting \eqref{eq:lem21}, 
$u_0^\varepsilon$ is a non-trivial and 
non-negative critical point of 
$E_{\lambda_1(p) + \varepsilon, \lambda_1(q) + \varepsilon}$ on $\W$. 
This ensures that $u_0^\varepsilon$ is a positive solution of 
$(GEV;\lambda_1(p) + \varepsilon, \lambda_1(q) + \varepsilon)$ 
(see Remark~\ref{rem:positive}).

\end{proof}

\section{Super- and sub-solutions}

In this section, we introduce the super- and sub-solution method for the problem $(GEV;\alpha, \beta)$.
First we recall the definition of super- and sub-solutions. 
\begin{definition}
A function $u \in \W$ is called a sub-solution 
(resp. super-solution) of $(GEV;\alpha,\beta)$ if 
$u\le 0$ (resp. $\ge 0$) on $\partial\Omega$ and 
$$
\intO \left(|\nabla u|^{p-2}+|\nabla u|^{q-2}\right)\nabla u\nabla \varphi\,dx 
-\intO \left(\alpha|u|^{p-2}u + \beta |u|^{q-2}u \right) \varphi\,dx \le 0 
\quad ({\rm resp. }\ge 0)
$$
for any $\varphi\in \W$ satisfying $\varphi(x)\ge 0$ a.e. $x\in\Omega$.
\end{definition}

In this section, to simplify the notation, we set 
$$
\f(u):=\alpha |u|^{p-2}u+\beta|u|^{q-2}u
$$
Taking any $w, v \in L^\infty(\Omega)$ such that $w\le v$ a.e. in $\Omega$, 
we introduce a truncation
\begin{gather*}
\f^{[v,w]}(x,t):=
\begin{cases}
\f(w(x)) & {\rm if}\ t\ge w(x), \\
\f(t) & {\rm if }\ v(x) < t < w(x), \\
\f(v(x)) & {\rm if}\ t\le v(x),
\end{cases}
\end{gather*}
and define the $C^1$-functional 
\begin{equation}
\label{def of E}
\E^{[v,w]}(u):=\frac{1}{p}\intO |\nabla u|^p\, dx 
+\frac{1}{q}\intO |\nabla u|^q\,dx 
-\intO \int_0^{u(x)}\f^{[v,w]}(x,t)\,dtdx. 
\end{equation}
It is easily seen that $\f^{[v,w]}(x,t)=\f(u(x))$ 
provided $v(x) \le t \le w(x)$. 

\begin{remark}\label{rem:minimizer}
Let $v, w\in L^\infty(\Omega)$ be a sub-solution 
and a super-solution of $(GEV;\alpha,\beta)$, respectively, and they satisfy 
$v\le w$ in $\Omega$. 
It follows from the boundedness of $\f^{[v,w]}$ that 
$\E^{[v,w]}$ is coercive and bounded from below on $\W$ (cf. \cite{MMT}). 
Moreover, it is easy to see that $\E^{[v,w]}$ is weakly lower semi-continuous. 
Hence, by the standard arguments ({\it cf}. \cite[Theorem 1.1]{MW}), 
$\E^{[v,w]}$ has a global minimum point $u \in \W$, which becomes a solution of $(GEV;\alpha,\beta)$.
Moreover, $u\in[v,w]$.
Indeed, since $w$ is a super-solution, taking $(u-w)_+$ as a test function, 
we have 
\begin{align*} 
0&\ge \langle (\E^{[v,w]})'(u), (u-w)_+\rangle -
\langle -\Delta_p w-\Delta_q w, (u-w)_+\rangle 
\\
& \qquad +\intO (\alpha|w|^{p-2}w + \beta|w|^{q-2}w)(u-w)_+\,dx 
\\
&=\int_{u>w} (|\nabla u|^{p-2}\nabla u-|\nabla w|^{p-2}\nabla w)
(\nabla u-\nabla w)\,dx 
\\ 
&\qquad +\int_{u>w} (|\nabla u|^{q-2}\nabla u-|\nabla w|^{q-2}\nabla w)
(\nabla u-\nabla w)\,dx\ge 0,
\end{align*}
where we take into account that $\f^{[v,w]}(x,t)=\alpha|w|^{p-2}w+\beta|w|^{q-2}w$ provided 
$t \ge w(x)$. 
This implies that $u\le w$. 
Similarly, by taking $(u-v)_-$ as test function, 
we see that $u\ge v$ holds. Therefore, 
$\f^{[v,w]}(x,u(x))=\alpha|u|^{p-2}u+\beta|u|^{q-2}u$, whence 
$u$ is a solution of $(GEV;\alpha,\beta)$. 

In particular, 
if a sub-solution $v \geq 0$ and $u$ is not-trivial, 
then it is known that $u\in {\rm int}\,\C_+$ 
(see Remark~\ref{rem:positive}). 
\end{remark}

\begin{lemma}\label{lem:ss-method} 
Assume that $\beta>\lambda_1(q)$ and 
$w\in {\rm int}\,C^1(\overline{\Omega})_+$ is a positive super-solution of 
$(GEV;\alpha,\beta)$. 
Then $\min_{\W} \E^{[0,w]}<0$ holds, and hence 
$(GEV;\alpha,\beta)$ has a positive solution belonging to 
${\rm int}\,\C_+$. 
\end{lemma}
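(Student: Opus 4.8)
The plan is to use the truncated functional $\E^{[0,w]}$ from \eqref{def of E} together with the machinery of Remark~\ref{rem:minimizer}. Since $0$ is trivially a sub-solution of $(GEV;\alpha,\beta)$ (the inequality in the definition becomes $0 \le 0$), and $w \in {\rm int}\, C^1(\overline{\Omega})_+$ is by hypothesis a positive super-solution with $0 \le w$ in $\Omega$, Remark~\ref{rem:minimizer} guarantees that $\E^{[0,w]}$ is coercive, bounded below, and weakly lower semi-continuous on $\W$, hence attains a global minimum at some $u \in \W$ with $u \in [0,w]$, and this $u$ is a solution of $(GEV;\alpha,\beta)$. The only thing left to establish is that this minimum is \emph{strictly negative}, since then $u \not\equiv 0$, and the last clause of Remark~\ref{rem:minimizer} (sub-solution $v = 0 \ge 0$, $u$ non-trivial) forces $u \in {\rm int}\,\C_+$, i.e. $u$ is a positive solution.

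To show $\min_{\W}\E^{[0,w]} < 0$ I would exhibit a single test function on which $\E^{[0,w]}$ is negative. The natural candidate is $t\varphi_q$ for small $t > 0$, where $\varphi_q$ is the positive first eigenfunction of $-\Delta_q$. First I would observe that since $w \in {\rm int}\,C^1(\overline{\Omega})_+$ and $\varphi_q \in {\rm int}\,\C_+$ as well, there is $t_0 > 0$ such that $t\varphi_q \le w$ in $\Omega$ for all $t \in (0, t_0]$ (this is the standard comparison of two functions in the interior of the positive cone — one dominates a small multiple of the other). For such $t$, the truncation is inactive along $t\varphi_q$, so $\f^{[0,w]}(x, s) = \f(s) = \alpha s^{p-1} + \beta s^{q-1}$ for $0 \le s \le t\varphi_q(x)$, and therefore
\begin{equation*}
\E^{[0,w]}(t\varphi_q) = \frac{t^p}{p}\bigl(\|\nabla\varphi_q\|_p^p - \alpha\|\varphi_q\|_p^p\bigr) + \frac{t^q}{q}\bigl(\|\nabla\varphi_q\|_q^q - \beta\|\varphi_q\|_q^q\bigr).
\end{equation*}
By the definition of $\lambda_1(q)$ we have $\|\nabla\varphi_q\|_q^q = \lambda_1(q)\|\varphi_q\|_q^q$, so the coefficient of $t^q$ equals $\frac{1}{q}(\lambda_1(q) - \beta)\|\varphi_q\|_q^q < 0$ precisely because $\beta > \lambda_1(q)$. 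Since $q < p$, the $t^q$ term dominates as $t \to 0^+$, so $\E^{[0,w]}(t\varphi_q) < 0$ for all sufficiently small $t \in (0, t_0]$, regardless of the sign of the $t^p$ coefficient. Hence $\min_{\W}\E^{[0,w]} \le \E^{[0,w]}(t\varphi_q) < 0$.

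I do not anticipate a serious obstacle here: the argument is a direct assembly of Remark~\ref{rem:minimizer} and a one-line fibering estimate. The only point requiring a small amount of care is the inactivity of the truncation along $t\varphi_q$, which rests on the comparison $t\varphi_q \le w$ for small $t$ — this is where the hypothesis $w \in {\rm int}\,C^1(\overline{\Omega})_+$ (rather than merely $w \ge 0$) is used, and it follows from the characterization \eqref{def:int} of the interior of the positive cone together with $\varphi_q \in {\rm int}\,\C_+$ (guaranteed by the regularity and strong maximum principle discussion in Remark~\ref{rem:positive}). Once that is in place, the conclusion that $u \in {\rm int}\,\C_+$ is immediate from the final paragraph of Remark~\ref{rem:minimizer}, completing the proof.
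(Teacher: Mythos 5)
Your proof is correct and follows essentially the same route as the paper: both invoke Remark~\ref{rem:minimizer} for the existence of a global minimizer of $\E^{[0,w]}$ and then test with $t\varphi_q$ for small $t>0$, using $w\in{\rm int}\,C^1(\overline{\Omega})_+$ to keep the truncation inactive and $\beta>\lambda_1(q)$ with $q<p$ to make the $t^q$ term dominate and force a negative value. No gaps.
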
 
\begin{proof}
Let $\beta>\lambda_1(q)$ and 
$w\in {\rm int}\,C^1(\overline{\Omega})_+$ be a positive super-solution of 
$(GEV;\alpha,\beta)$. 
Recall that $\E^{[0,w]}$ has a global minimum point as stated in 
Remark~\ref{rem:minimizer}. 

Since $w\in {\rm int}\,C^1(\overline{\Omega})_+$, 
for sufficiently small $t>0$ we have $w-t\varphi_q \ge 0$ in $\Omega$. 
This implies that 
$\f^{[0,w]}(x, t\varphi_q)=\alpha t^{q-1} \varphi_q^{p-1}+\beta t^{p-1} \varphi_q^{q-1}$. 
Hence, for sufficiently small $t>0$, we obtain 
$$
\E^{[0,w]}(t\varphi_q)=\frac{t^p}{p}
(\|\nabla \varphi_q\|^p-\alpha\|\varphi_q\|_p^p) 
-\frac{t^q}{q}(\beta-\lambda_1(q)) \|\varphi_q\|_q^q. 
$$
Recalling that $q<p$ and $\beta-\lambda_1(q)>0$, we see that 
$\E^{[0,w]}(t\varphi_q)<0$ for sufficiently small $t>0$, 
whence $\min_{\W}\E^{[0,w]}<0$. 
Therefore, $\E^{[0,w]}$ has a non-trivial critical point, and  
our conclusion follows (see Remark~\ref{rem:minimizer}). 
\end{proof}

\section{Properties of $\lambda^*(s)$} 
In this section we prove Proposition \ref{prop:prop}.

\begin{proof*}{Proposition \ref{prop:prop}} \textbf{Part (i).}
Fix any $s \in \mathbb{R}$ and let $u \in \W$ be a positive solution of $(GEV;\lambda+s, \lambda)$ for some $\lambda \in \mathbb{R}$. 
Then $u \in {\rm int}\, \C_+$ by
Remark~\ref{rem:positive}. 
Choose any $\varphi \in {\rm int}\C_+$. Then, $\varphi/u \in L^\infty(\Omega)$, 
and hence we can take 
$$
\xi = \frac{\varphi^p}{u^{p-1} + u^{q-1}} \in \W 
$$
as a test function. 
Therefore, from Proposition \ref{picone} there follows the existence of $\rho > 0$ independent of $u$ and $\lambda$ such that
\begin{align*}
&\int_\Omega |\nabla u|^{p-2}  \nabla u \nabla \left( \frac{\varphi^p}{u^{p-1} + u^{q-1}} \right)\, dx  + \int_\Omega|\nabla u|^{q-2} \nabla u \nabla \left( \frac{\varphi^p}{u^{p-1} + u^{q-1}} \right)\, dx \\
&= \lambda \int_\Omega \varphi^p \, dx + s \int_\Omega \frac{u^{p-1} \varphi^p}{u^{p-1} + u^{q-1}} \, dx \leq \frac{1}{\rho} \left( \int_\Omega |\nabla \varphi|^p \,dx + 
\int_\Omega |\nabla (\varphi^{p/q})|^q \,dx \right).
\end{align*}
Combining this inequality with the estimation
$$
s \int_\Omega \frac{u^{p-1} \varphi^p}{u^{p-1} + u^{q-1}} \, dx \geq 
\min\left\{ 0, s \int_\Omega \varphi^p \, dx \right\}, \quad s \in \mathbb{R},
$$
we conclude that
\begin{equation}
\label{eq:picone:3}
\lambda \int_\Omega \varphi^p \,dx + \min\left\{ 0, s \int_\Omega \varphi^p \, dx \right\} \leq 
\frac{1}{\rho} \left( \int_\Omega |\nabla \varphi|^p \,dx + 
\int_\Omega |\nabla (\varphi^{p/q})|^q \,dx \right).
\end{equation}
Since $\int_\Omega \varphi^p \,dx$, $\int_\Omega |\nabla \varphi|^p \,dx$, $\int_\Omega |\nabla (\varphi^{p/q})|^q \,dx$ and $\rho$ 
are positive constants independent of $u$ and $\lambda$, 
$\lambda$ satisfying \eqref{eq:picone:3} is bounded from above. Therefore,  
$\lambda^*(s) < +\infty$, which completes the proof of Part (i).

\smallskip
\par\noindent
\textbf{Part (iv).}
Assume first that {\bf (LI)} holds. Then Lemma \ref{lemm:well-defined} implies that $(GEV; \lambda_1(p) + \varepsilon, \lambda_1(q) + \varepsilon)$ possesses a positive solution for sufficiently small $\varepsilon > 0$.
Noting that
$(\lambda_1(p) + \varepsilon, \lambda_1(q) + \varepsilon)
=(\lambda_1(q) + \varepsilon+s^*,\lambda_1(q) + \varepsilon)$, 
by definition of $\lambda^*(s^*)$ 
we have $\lambda^*(s^*)\ge \lambda_1(q)+\varepsilon$, 
and so $\lambda^*(s^*)+s^*\ge \lambda_1(q)+\varepsilon+s^*
=\lambda_1(p)+\varepsilon$, which is the desired conclusion.

Assume now that {\bf (LI)} is violated, i.e. $\varphi_p \equiv k \varphi_q$ in $\Omega$ for some $k \neq 0$. Let $u$ be a positive weak solution of $(GEV; \alpha, \beta)$ for some $\alpha, \beta \in \mathbb{R}$. Then, due to the regularity of $\varphi_p$ and $u$ (see Remark \ref{rem:positive}), the classical Picone identity \cite{Alleg} implies
\begin{equation}
\label{eq:equiv:pic1}
\int_\Omega |\nabla u|^{p-2}  \nabla u \nabla \left( \frac{\varphi_p^p}{u^{p-1}} \right)\, dx \leq 
\int_\Omega |\nabla \varphi_p|^p \, dx = \lambda_1(p) \int_\Omega \varphi_p^p \, dx.
\end{equation}
At the same time, generalized Picone's identity from \cite[Lemma~1, p.~536]{ilcomp} yields
\begin{align}
\notag
\int_\Omega |\nabla u|^{q-2}  \nabla u \nabla \left( \frac{\varphi_p^p}{u^{p-1}} \right)\, dx &\leq
\int_\Omega |\nabla \varphi_p|^{q-2} \nabla \varphi_p \nabla \left( \frac{\varphi_p^{p-q+1}}{u^{p-q}} \right) \, dx \\
\label{eq:equiv:pic2}
&= \lambda_1(q) \int_\Omega \varphi_p^{p} u^{q-p} \, dx,
\end{align}
where the last equality is valid because $\varphi_p$ is an eigenfunction of $-\Delta_q$, by assumption.

Hence, using \eqref{eq:equiv:pic1} and \eqref{eq:equiv:pic2}, we obtain for the solution $u$ of $(GEV; \alpha, \beta)$ the following inequality:
\begin{align*}
&\int_\Omega |\nabla u|^{p-2}  \nabla u \nabla \left( \frac{\varphi_p^p}{u^{p-1}} \right)\, dx
+
\int_\Omega |\nabla u|^{q-2}  \nabla u \nabla \left( \frac{\varphi_p^p}{u^{p-1}} \right)\, dx
\\ 
&=
\alpha \int_\Omega \varphi_p^p \, dx 
+
\beta \int_\Omega \varphi_p^p u^{q-p} \, dx 
\leq 
\lambda_1(p) \int_\Omega \varphi_p^p \, dx + \lambda_1(q) \int_\Omega \varphi_p^{p} u^{q-p} \, dx,
\end{align*}
which is impossible if $\alpha > \lambda_1(p)$ and $\beta > \lambda_1(q)$ simultaneously, and the proof is complete.

\smallskip
\par\noindent
\textbf{Part (ii).}
Assume that $s \neq s^*$. Then taking $\alpha = \lambda + s$ and $\beta = \lambda$, Proposition \ref{prop:exist-1} implies that $\lambda^*(s) + s \geq \lambda_1(p)$ and $\lambda^*(s) \geq \lambda_1(q)$. If now $s = s^*$ and $\lambda_1(p)$ and $\lambda_1(q)$ have the same eigenspace, i.e. there exists $k \neq 0$ such that $\varphi_p \equiv k \varphi_q$ in $\Omega$, then from Proposition \ref{prop:non-exist} it follows that $(GEV; \lambda_1(p), \lambda_1(q))$ possesses a positive solution, i.e. $\lambda^*(s^*) + s^* \geq \lambda_1(p)$ and $\lambda^*(s^*) \geq \lambda_1(q)$. Finally, if $\lambda_1(p)$ and $\lambda_1(q)$ have different eigenspaces, that is, {\bf (LI)} is satisfied, then Part (iv) of Proposition \ref{prop:prop} yields the desired result.

\smallskip
\par\noindent
\textbf{Part (vi).}
Let $s<s'$. 
Part (ii) of Proposition \ref{prop:prop} implies that 
$\lambda^*(s), \lambda^*(s') \geq \lambda_1(q)$. 
Thus, in order to prove $\lambda^*(s)\ge \lambda^*(s')$, 
it is sufficient to consider only the case 
$\lambda^*(s')>\lambda_1(q)$. 

Fix any $\varepsilon>0$ such that 
$\lambda^*(s')-\varepsilon>\lambda_1(q)$. 
Then, by the definition of $\lambda^*(s')$, 
there exists $\mu$ satisfying 
$\lambda^*(s')>\mu>\lambda^*(s')-\varepsilon$ 
such that $(GEV;\mu+s',\mu)$ has a positive solution 
$w_\mu\in {\rm int}\,\C_+$. 
It is easy to see that $w_\mu$ is a positive 
super-solution of $(GEV;\mu+s,\mu)$, since $s < s'$. Hence, 
Lemma~\ref{lem:ss-method} ensures the existence of a positive solution of 
$(GEV;\mu+s,\mu)$ (note $\mu>\lambda^*(s')-\varepsilon>\lambda_1(q)$). 
Hence, $\lambda^*(s)\ge \mu(>\lambda^*(s')-\varepsilon)$. 
Since $\varepsilon$ is arbitrary, we have 
$\lambda^*(s) \geq \lambda^*(s')$. 

Next, we show that
$\lambda^*(s)+s\le \lambda^*(s')+s'$ for $s < s'$.
If $\lambda^*(s)+s-s'\le \lambda_1(q)$, then 
$\lambda^*(s)+s\le \lambda_1(q)+s'\le \lambda^*(s')+s'$, due to the fact that  
$\lambda_1(q) \leq \lambda^*(s')$. 
So, we may suppose that 
$\lambda^*(s)+s-s'>\lambda_1(q)$. 
Fix any $\varepsilon>0$ such that 
$\lambda^*(s)+s-s'-\varepsilon>\lambda_1(q)$. 
By the definition of $\lambda^*(s)$, there exists 
$\mu>\lambda^*(s)-\varepsilon$ such that 
$(GEV;\mu+s,\mu)$ has a positive solution $w_\mu$. 
Putting $\beta=\mu+s-s'$, $w_\mu$ is the positive solution of $(GEV;\beta+s',\beta+s'-s)$. 
Noting that $\beta+s'-s>\beta$, $w_\mu$ is a positive 
super-solution of $(GEV;\beta+s',\beta)$. 
Since $\beta>\lambda^*(s)+s-s'-\varepsilon>\lambda_1(q)$, 
by the same argument above, 
we get $\lambda^*(s') \geq \lambda^*(s)+s-s'$, whence 
$\lambda^*(s)+s \leq \lambda^*(s')+s'$.

\smallskip
\par\noindent
\textbf{Part (iii).}
Assume first that {\bf (LI)} doesn't hold. 
Then $s^* = s^*_+$ and, due to Part (iv) of Proposition \ref{prop:prop}, $\lambda^*(s^*) \leq \lambda_1(q)$. At the same time, $\lambda^*(s) \geq \lambda_1(q)$ for all $s \in \mathbb{R}$ by Part (ii). Hence, $\lambda^*(s^*) = \lambda_1(q)$ and noting that $\lambda^*(s)$ is non-increasing by Part (vi) we get the desired result.

Let now {\bf (LI)} hold and suppose, by contradiction, that there exists $s \geq s^*_+$ such that $\lambda^*(s) > \lambda_1(q)$.
Since $\lambda^*(s^*) + s^* > \lambda_1(p)$ by Part (iv) of Proposition \ref{prop:prop}, using Part (vi) and recalling that $s$, we get 
$$
\lambda^*(s)+s \geq \lambda^*(s^*) + s^* > \lambda_1(p).
$$
By definition of $\lambda^*(s)$, for any $\varepsilon_0 > 0$ there exists $\varepsilon \in [0, \varepsilon_0)$ such that $(GEV;\lambda^*(s) + s - \varepsilon,\lambda^*(s) - \varepsilon)$
possesses a positive solution. Let us take $\varepsilon_0$ small enough to satisfy
\begin{equation}
\label{eq:partii2}
\lambda^*(s) + s - \varepsilon_0 > \lambda_1(p), \quad 
\lambda^*(s) - \varepsilon_0 > \lambda_1(q),
\end{equation}
and let $u$ be a corresponding solution of $(GEV;\lambda^*(s) + s - \varepsilon,\lambda^*(s) - \varepsilon)$, where $\varepsilon \in [0, \varepsilon_0)$.

Using the Picone identities \eqref{eq:equiv:pic1} and \eqref{eq:equiv:pic2} applied to $\varphi_q$ instead of $\varphi_p$, we obtain the following inequality:
\begin{align}
\notag
&\int_\Omega |\nabla u|^{p-2}  \nabla u \nabla \left( \frac{\varphi_q^p}{u^{p-1}} \right)\, dx
+
\int_\Omega |\nabla u|^{q-2}  \nabla u \nabla \left( \frac{\varphi_q^p}{u^{p-1}} \right)\, dx
\\ 
\notag
&=
(\lambda^*(s) + s - \varepsilon) \int_\Omega \varphi_q^p \, dx 
+
(\lambda^*(s) - \varepsilon) \int_\Omega \varphi_q^p u^{q-p} \, dx \\
\label{eq:partii2-1}
&\leq 
\int_\Omega |\nabla \varphi_q|^p \, dx + \lambda_1(q) \int_\Omega \varphi_q^{p} u^{q-p} \, dx,
\end{align}
On the other hand, since $\varepsilon < \varepsilon_0$, from \eqref{eq:partii2} it follows that
\begin{align}
\notag
(\lambda_1(q) &+ s) \int_\Omega \varphi_q^p \, dx + \lambda_1(q) \int_\Omega \varphi_q^p u^{q-p} \, dx \\
\label{eq:partii2-2}
&< 
(\lambda^*(s) + s - \varepsilon) \int_\Omega \varphi_q^p \, dx 
+
(\lambda^*(s) - \varepsilon) \int_\Omega \varphi_q^p u^{q-p} \, dx
\end{align}
Finally, combining \eqref{eq:partii2-1} with \eqref{eq:partii2-2} we conclude that
$$
s < \frac{\int_\Omega |\nabla \varphi_q|^p \, dx}{\int_\Omega \varphi_q^p \, dx} - \lambda_1(q) = s^*_+,
$$
which contradicts our assumption $s \geq s^*_+$.

\smallskip
\par\noindent
\textbf{Part (v).}
Since $\lambda^*(s)$ is bounded for any $s \in \mathbb{R}$ by Part (i) of Proposition \ref{prop:prop} and non-increasing by Part (vi), for every $s' \in \mathbb{R}$ there exist one-sided limits of $\lambda^*(s)$ and 
\begin{equation}\label{eq:mon:l:1}
\lim_{s \to s'-0} \lambda^*(s) \geq \lambda^*(s') \geq \lim_{s \to s' + 0} \lambda^*(s).
\end{equation}
On the other hand, $\lambda^*(s) + s$ is non-decreasing by Part (vi) of Proposition \ref{prop:prop}, and hence
$$
\lim_{s \to s'-0} (\lambda^*(s) + s) \leq \lambda^*(s') + s' \leq \lim_{s \to s' + 0} (\lambda^*(s) + s),
$$
which yields
\begin{equation}\label{eq:mon:l:2}
\lim_{s \to s'-0} \lambda^*(s) \leq \lambda^*(s') \leq \lim_{s \to s' + 0} \lambda^*(s).
\end{equation}
Combining \eqref{eq:mon:l:1} with \eqref{eq:mon:l:2} we conclude that the one-sided limits are equal to $\lambda^*(s')$, which establishes the desired continuity, due to the arbitrary choice of $s' \in \mathbb{R}$.
\end{proof*}

\section{Proof of Theorems~\ref{thm:main2}, \ref{thm:main} and Proposition~\ref{prop:special-2}}

\begin{proof*}{Theorem~\ref{thm:main2}} 
Note first that from Propositions \ref{prop:non-exist} and \ref{prop:exist-1} it directly follows that if \eqref{eq:condition} is satisfied, then 
$(GEV;\alpha,\beta)$ has at least one positive solution.

Conversely, if $(GEV;\alpha,\beta)$ has at least one positive solution, 
then by the definition of $\lambda^*(s)$, Part (iv) of Proposition \ref{prop:prop},  and Proposition \ref{prop:non-exist}, 
it has to satisfy 
$$
(\alpha,\beta)\in (\lambda_1(p),+\infty)\times(-\infty,\lambda_1(q)] 
\cup (-\infty,\lambda_1(p)]\times(\lambda_1(q),+\infty)\cup\{(\lambda_1(p),\lambda_1(q))\}. 
$$
To prove \eqref{eq:condition}, it is sufficient to show that 
$(\alpha,\beta)\not\in \{\lambda_1(p)\}\times (\lambda_1(q),+\infty)$ 
and $(\alpha,\beta)\not\in (\lambda_1(p),+\infty)\times \{\lambda_1(q)\}$. 
Suppose that 
$(GEV;\alpha,\beta)$ has a positive solution $u$ for 
$\alpha=\lambda_1(p)$ and $\beta \geq \lambda_1(q)$ 
(resp. $\alpha \geq \lambda_1(p)$ and $\beta=\lambda_1(q)$). 
Then, as in the proof of  Proposition \ref{prop:prop}, Part (iv), from \eqref{eq:equiv:pic1} and \eqref{eq:equiv:pic2} we have
\begin{align*}
&\int_\Omega |\nabla u|^{p-2}  \nabla u \nabla \left( \frac{\varphi_p^p}{u^{p-1}} \right)\, dx
+
\int_\Omega |\nabla u|^{q-2}  \nabla u \nabla \left( \frac{\varphi_p^p}{u^{p-1}} \right)\, dx
\\ 
&=
\alpha \int_\Omega \varphi_p^p \, dx 
+
\beta \int_\Omega \varphi_p^p u^{q-p} \, dx 
\leq 
\lambda_1(p) \int_\Omega \varphi_p^p \, dx + \lambda_1(q) \int_\Omega \varphi_p^{p} u^{q-p} \, dx,
\end{align*}
which implies that $\beta = \lambda_1(q)$ (resp. $\alpha = \lambda_1(p)$).
Hence, we get the desired result.
\end{proof*}

\begin{proof*}{Theorem~\ref{thm:main}} 
Consider first the non-existence result. Let $\beta>\lambda^*(\alpha-\beta)$. Then the definition of $\lambda^*(\alpha-\beta)$ implies that 
$(GEV;\alpha,\beta)$ has no positive solutions. 

(i)~ Assume that $\lambda_1(q)<\beta<\lambda^*(s)$ with $s=\alpha-\beta$.
Then, by the definition of $\lambda^*(s)$, there exists $\mu \in (\beta, \lambda^*(s)]$ such that $(GEV;\mu+s, \mu)$ has a positive solution $w \in {\rm int}\, \C_+$ 
(see Remark~\ref{rem:positive}). Moreover, $w$ is a positive super-solution 
of $(GEV;\alpha, \beta) \equiv (GEV;\beta + s, \beta)$, since $\mu>\beta$.
Hence, the assumptions of Lemma~\ref{lem:ss-method} are satisfied, which guarantees the existence of a positive solution of $(GEV;\alpha, \beta)$. 

(ii)~ Assume now that $\lambda_1(p)<\alpha$ and $\beta<\lambda^*(s)$ with $s=\alpha-\beta$.
Note that if $\beta > \lambda_1(q)$, then Part (i) gives the claim. If 
$\beta < \lambda_1(q)$, then Proposition \ref{prop:exist-1} implies the desired result. Therefore, it remains to consider the case $\beta = \lambda_1(q)$.

Let us divide the proof into tree cases:

{\bf Case 1.} $\varphi_q$ satisfies 
$\|\nabla \varphi_q\|_p^p- \alpha \|\varphi_q\|_p^p>0$. 
Note that
$$
\lambda_1 \left(q,\frac{\lambda_1(q)}{\alpha} \right)=\frac{\lambda_1(q) \alpha}{\lambda_1(q)}
= \alpha > \lambda_1(p)=\lambda_1(p,1).
$$
This yields 
\eqref{hyp:MT-3} with $r=q$, $r'=p$, 
$\lambda=\alpha$, $m_p\equiv 1$ and $m_q \equiv \frac{\lambda_1(q)}{\alpha}$. 
Moreover, since $\lambda_1(r,c)$ and $\lambda_1(r,1)=\lambda_1(r)$ 
have the same eigenspace for any constant $c > 0$, namely, 
$\varphi_1(r,c)=t \varphi_1(r,1)=t\varphi_r$ 
for some $t>0$, the hypothesis of Case 1 ensures \eqref{hypo2:MT-3}. Hence,
Theorem~\ref{thm:MT-3} guarantees our conclusion.

{\bf Case 2.} $\varphi_q$ satisfies 
$\|\nabla \varphi_q\|_p^p-\alpha\|\varphi_q\|_p^p<0$. 
Since $\beta < \lambda^*(s)$, by definition of $\lambda^*(s)$ there exists $\mu \in (\beta, \lambda^*(s)]$ such that $(GEV; \mu + s, \mu)$ possesses a positive solution $w \in {\rm int}\,\C_+$. As in the proof of Part (i) it is easy to see that $w$ is a positive super-solution of $(GEV;\alpha, \beta) \equiv (GEV;\beta + s, \beta)$.

Let $E_{\alpha,\beta}^{[0,w]}$ be the functional defined 
by \eqref{def of E} with 
a positive super-solution $w$ and sub-solution $0$.
Since $w$ and $\varphi_q$ belong to ${\rm int}\,\C_+$, 
for sufficiently small $t>0$ we get $t\varphi_q\le w$ in $\Omega$, 
whence 
$f_{\alpha,\beta}^{[0,w]}(x, t\varphi_q)
=\alpha t^{p-1}\varphi_q^{p-1} 
+\beta t^{q-1}\varphi_q^{q-1}$. 
Therefore, noting that $\beta=\lambda_1(q)$, for such small $t>0$ we have 
$$
\E^{[0,w]}(t\varphi_q)=
\frac{t^{p}}{p}\left(\|\nabla \varphi_q\|_p^p
-\alpha\|\varphi_q\|_p^p\right)<0. 
$$
This ensures that $\inf_{\W}\E^{[0,w]}<0$. 
Hence, $(GEV;\alpha,\beta)$ has a positive solution (refer to Remark 5.2).

{\bf Case 3.} $\varphi_q$ satisfies 
$\|\nabla \varphi_q\|_p^p-\alpha\|\varphi_q\|_p^p=0$. 
Similarly to Case 2, we know that 
$\E^{[0,w]}(t\varphi_q)=0$ for sufficiently small 
$t>0$. 
If $\min_{\W}\E^{[0,w]}<0$ holds, then 
$(GEV;\alpha,\beta)$ has a positive solution. 
On the other hand, if $\min_{\W}\E^{[0,w]}=0$, 
then $t\varphi_q$ is a global minimizer 
of $\E^{[0,w]}$, whence 
$t\varphi_q$ is a positive solution of $(GEV;\alpha,\beta)$. 
Consequently, the proof is complete.

\end{proof*}


For the proof of Proposition~\ref{prop:special-2}, 
we prepare two lemmas. 
The following lemma is needed to prove the boundedness of approximate solutions. 
\begin{lemma}\label{lem:bdd-sol} 
Let $u_n$ be a positive solution of 
$(GEV;\alpha_n,\beta_n)$ with $\alpha_n\to \alpha$ and $\beta_n\to \beta$. 
If $\|\nabla u_n\|_p\to \infty$ as $n\to\infty$, 
then $\alpha=\lambda_1(p)$. 
\end{lemma}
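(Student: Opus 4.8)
The plan is to argue by contradiction via a normalization-and-compactness scheme, closely parallel to the blow-up analysis already used in the proof of Lemma~\ref{lem:bdd-below-on-Nehari}. Suppose $\|\nabla u_n\|_p \to \infty$ but $\alpha \neq \lambda_1(p)$; by Proposition~\ref{prop:non-exist} and Part~(ii) of Proposition~\ref{prop:prop} applied along the sequence, we must in fact have $\alpha \geq \lambda_1(p)$, so the offending case is $\alpha > \lambda_1(p)$. Normalize by setting $v_n := u_n/t_n$ with $t_n := \|\nabla u_n\|_p \to \infty$, so that $\|\nabla v_n\|_p = 1$. Passing to a subsequence, the Eberlein--Shmulyan and Sobolev embedding theorems give $v_n \rightharpoonup v^*$ weakly in $\W$ and $W_0^{1,q}$, and $v_n \to v^*$ strongly in $L^p(\Omega)$ and $L^q(\Omega)$. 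Since $u_n$ solves $(GEV;\alpha_n,\beta_n)$, testing the equation with $u_n$ itself yields
$$
\|\nabla u_n\|_p^p + \|\nabla u_n\|_q^q = \alpha_n \|u_n\|_p^p + \beta_n \|u_n\|_q^q,
$$
and dividing by $t_n^p$ gives
$$
1 + t_n^{q-p}\|\nabla v_n\|_q^q = \alpha_n \|v_n\|_p^p + \beta_n t_n^{q-p}\|v_n\|_q^q.
$$
Because $q < p$, the terms carrying the factor $t_n^{q-p}$ vanish as $n \to \infty$ (the $L^q$-norms of $v_n$ are bounded by the Sobolev embedding, and $\|\nabla v_n\|_q$ is bounded since $W_0^{1,p} \hookrightarrow W_0^{1,q}$), so $\alpha_n \|v_n\|_p^p \to 1$, whence $\alpha \|v^*\|_p^p = 1$ and in particular $v^* \not\equiv 0$.

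Next I would show $v^*$ is an eigenfunction of $-\Delta_p$ at level $\lambda_1(p)$, forcing $\alpha = \lambda_1(p)$ and the contradiction. To that end, test $(GEV;\alpha_n,\beta_n)$ with an arbitrary $\varphi \in \W$ and divide by $t_n^{p-1}$:
$$
\intO |\nabla v_n|^{p-2}\nabla v_n \nabla\varphi\,dx + t_n^{q-p}\intO |\nabla v_n|^{q-2}\nabla v_n\nabla\varphi\,dx = \alpha_n \intO |v_n|^{p-2}v_n\varphi\,dx + \beta_n t_n^{q-p}\intO |v_n|^{q-2}v_n\varphi\,dx.
$$
The two $t_n^{q-p}$-terms tend to zero (using $q-1 < p-1$ and boundedness of the $W_0^{1,q}$-norms), and the right-hand $L^p$-term converges to $\alpha\intO |v^*|^{p-2}v^*\varphi\,dx$ by the strong $L^p$ convergence. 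For the leading term one passes to the limit using the standard $(S_+)$-property of $-\Delta_p$: testing with $\varphi = v_n - v^*$ and exploiting that the lower-order contributions vanish shows $\limsup_n \langle -\Delta_p v_n, v_n - v^*\rangle \leq 0$, hence $v_n \to v^*$ strongly in $\W$, so $\|\nabla v^*\|_p = 1$ and $-\Delta_p v_n \to -\Delta_p v^*$ in $W_0^{1,p}(\Omega)^*$. Therefore $v^*$ solves $-\Delta_p v^* = \alpha |v^*|^{p-2}v^*$ in $\Omega$ with $v^* = 0$ on $\partial\Omega$. Since each $u_n \geq 0$, also $v^* \geq 0$, and $v^* \not\equiv 0$; the strong maximum principle (as in Remark~\ref{rem:positive}) gives $v^* > 0$ in $\Omega$, so $v^*$ is a positive eigenfunction of $-\Delta_p$. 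By the characterization of $\lambda_1(p)$ as the only eigenvalue with a positive eigenfunction, we conclude $\alpha = \lambda_1(p)$, contradicting $\alpha > \lambda_1(p)$.

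The main obstacle is the limit passage in the $p$-Laplacian term: one must be careful that the $q$-Laplacian contributions, although individually unbounded-looking in the unnormalized equation, genuinely become negligible after dividing by the correct power of $t_n$, and that this legitimizes invoking the $(S_+)$-property for $-\Delta_p$ alone rather than for the full $(p,q)$-operator. A secondary technical point is justifying $v^* \not\equiv 0$ cleanly; the identity $\alpha\|v^*\|_p^p = 1$ obtained above handles this, provided one first knows $\alpha \neq 0$, which is guaranteed since $\alpha \geq \lambda_1(p) > 0$. Once strong convergence $v_n \to v^*$ in $\W$ is in hand, everything else is routine.
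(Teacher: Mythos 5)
Your proposal is correct and follows essentially the same route as the paper: normalize $v_n = u_n/\|\nabla u_n\|_p$, test the equation with $(v_n-v^*)$ scaled by $t_n^{-(p-1)}$ so that the $q$-Laplacian and $\beta_n$-terms carry a factor $t_n^{q-p}\to 0$, invoke the $(S_+)$ property of $-\Delta_p$ to upgrade to strong $W_0^{1,p}$-convergence, pass to the limit to see that $v^*$ is a non-trivial non-negative eigenfunction of $-\Delta_p$ at level $\alpha$, and conclude $\alpha=\lambda_1(p)$ since only the first eigenvalue admits a positive eigenfunction. One harmless but incorrect side remark: the preliminary reduction to ``$\alpha\geq\lambda_1(p)$'' does not follow from Proposition \ref{prop:non-exist} (positive solutions exist with $\alpha_n<\lambda_1(p)$ when $\beta_n>\lambda_1(q)$), but you never actually need it --- the identity $\alpha\|v^*\|_p^p=1$ by itself already forces $v^*\not\equiv 0$, and the rest of your argument proves $\alpha=\lambda_1(p)$ directly, so the contradiction wrapper can simply be dropped.
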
 
\begin{proof} 
Let $u_n$ be a positive solution of 
$(GEV;\alpha_n,\beta_n)$ with $\alpha_n\to \alpha$, $\beta_n\to \beta$ 
and $\|\nabla u_n\|_p\to \infty$ as $n\to\infty$. 
Setting $w_n:=u_n/\|\nabla u_n\|_p$, we may admit, up to subsequence, that $w_n \to w_0$ weakly in
$\W$ and strongly in $L^p(\Omega)$ and $L^q(\Omega)$ for some $w_0\in\W$. 
By taking
$(w_n-w_0)/\|\nabla u_n\|_p^{p-1}$ as a test function, we obtain
\begin{align*}
0&=\intO |\nabla w_n|^{p-2}\nabla w_n\nabla (w_n-w_0)\,dx
+\frac{1}{\| \nabla u_n\|_p^{p-q}}\intO |\nabla w_n|^{q-2}\nabla w_n\nabla
(w_n-w_0)\,dx
\\
&\qquad -\alpha_n\intO w_n^{p-1}(w_n-w_0)\,dx
-\frac{\beta_n}{\| \nabla u_n\|_p^{p-q}}\intO w_n^{q-1}(w_n-w_0)\,dx
\\
&=\intO |\nabla w_n|^{p-2}\nabla w_n\nabla (w_n-w_0)\,dx +o(1),
\end{align*}
where $o(1)\to 0$ as $n\to\infty$. 
Due to the $(S_+)$ property of
$-\Delta_p$ (cf. \cite[Definition 5.8.31 and Lemma 5.9.14]{drabekmilota}), 
this implies that $w_n\to w_0$ strongly in
$\W$. Then, for any $\varphi\in \W$, by taking
$\varphi/\|\nabla u_n\|_p^{p-1}$ as test function we have
\begin{align*}
0 &=\intO |\nabla w_n|^{p-2}\nabla w_n\nabla \varphi\,dx
+\frac{1}{\|\nabla u_n\|_p^{p-q}} \intO |\nabla w_n|^{q-2}\nabla w_n\nabla
\varphi\,dx
\\
&\qquad -\alpha_n\intO w_n^{p-1} \varphi\,dx
-\frac{\beta_n}{\|\nabla u_n\|_p^{p-q}}\intO w_n^{q-1}\varphi\,dx.
\end{align*}
Letting $n\to\infty$ we conclude that $w_0$ is a non-negative,
non-trivial solution of $(EV;p,\alpha)$ 
(note $w_0\ge0$ and $\|\nabla w_0\|_p=1$). 
According to the strong
maximum principle (see Remark~\ref{rem:positive}), we have 
$w_0>0$ in $\Omega$.
This yields that $w_0$ is a positive eigenfunction corresponding 
to $\alpha$ and $\alpha=\lambda_1(p)$, since any eigenvalue other than $\lambda_1(p)$ 
has no positive eigenfunctions.
\end{proof} 

\begin{lemma}\label{lem:bdd-small} 
If $u$ is a positive solution of 
$(GEV;\alpha,\beta)$, then 
$$
\intO |\nabla \varphi|^q|\nabla u|^{p-q}\,dx+\intO |\nabla \varphi|^q\,dx 
\ge \intO \left(\alpha u^{p-q}+\beta\right)\varphi^q\,dx 
$$
for every $\varphi\in {\rm int}\,\C_+$. 
\end{lemma}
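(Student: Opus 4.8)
The plan is to test the weak formulation of $(GEV;\alpha,\beta)$ against a Picone-type function built from $\varphi$ and the solution $u$, and then to bound the resulting expression termwise, with one estimate handling the $p$-Laplacian part and one the $q$-Laplacian part.

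First I would fix $\varphi\in{\rm int}\,\C_+$ and set $\xi:=\varphi^q/u^{q-1}$. Since $u$ is a positive solution of $(GEV;\alpha,\beta)$, Remark~\ref{rem:positive} gives $u\in{\rm int}\,\C_+$; together with $\varphi\in{\rm int}\,\C_+$ this forces $\varphi/u\in L^\infty(\Omega)$. From $\nabla\xi=q(\varphi/u)^{q-1}\nabla\varphi-(q-1)(\varphi/u)^q\nabla u$ one sees that $\xi$ is bounded, has bounded gradient and vanishes on $\partial\Omega$, so $\xi\in W^{1,\infty}_0(\Omega)\subset\W$ is an admissible test function (this is the same admissibility argument already used in the proof of Proposition~\ref{prop:prop}, Part~(i)). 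Inserting $\xi$ into the weak formulation of $(GEV;\alpha,\beta)$ and using $u>0$, the right-hand side $\intO(\alpha u^{p-1}+\beta u^{q-1})\xi\,dx$ becomes exactly $\intO(\alpha u^{p-q}+\beta)\varphi^q\,dx$, so it remains to bound $\intO|\nabla u|^{p-2}\nabla u\nabla\xi\,dx+\intO|\nabla u|^{q-2}\nabla u\nabla\xi\,dx$ from above by $\intO|\nabla\varphi|^q|\nabla u|^{p-q}\,dx+\intO|\nabla\varphi|^q\,dx$.

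For the $q$-term this is precisely the classical Picone inequality \cite{Alleg} applied to the pair $(\varphi,u)$, which gives $\intO|\nabla u|^{q-2}\nabla u\nabla\xi\,dx\le\intO|\nabla\varphi|^q\,dx$. For the $p$-term I would establish the pointwise estimate $|\nabla u|^{p-2}\nabla u\cdot\nabla\xi\le|\nabla u|^{p-q}|\nabla\varphi|^q$ a.e. in $\Omega$: substituting the formula for $\nabla\xi$ and factoring out $|\nabla u|^{p-q}$ reduces the claim to $q(\varphi/u)^{q-1}|\nabla u|^{q-2}\nabla u\cdot\nabla\varphi-(q-1)(\varphi/u)^q|\nabla u|^q\le|\nabla\varphi|^q$, which follows from Cauchy--Schwarz $|\nabla u|^{q-2}\nabla u\cdot\nabla\varphi\le|\nabla u|^{q-1}|\nabla\varphi|$ and then Young's inequality with exponents $q$ and $q/(q-1)$ applied to $q\bigl((\varphi/u)|\nabla u|\bigr)^{q-1}|\nabla\varphi|$; the case $\nabla u=0$ is trivial, since then both sides vanish because $p>q$. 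Integrating this and adding the $q$-term bound to the identity from the previous paragraph yields the lemma.

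The only genuinely delicate point is the admissibility of $\xi$ as a test function, namely the boundary behaviour that makes $\varphi/u$ bounded; this is exactly what $u,\varphi\in{\rm int}\,\C_+$ provides, while the remaining steps are elementary Picone/Young arguments. I note that the pointwise inequality used for the $p$-term is a special instance of the modified Picone identity recorded in Appendix~A, so one could alternatively invoke that statement directly instead of carrying out the Young computation by hand.
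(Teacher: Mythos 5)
Your proposal is correct and follows essentially the same route as the paper: the same test function $\xi=\varphi^q/u^{q-1}$, the classical Picone inequality for the $q$-term, and for the $p$-term the same pointwise Young estimate (your ``factor out $|\nabla u|^{p-q}$ and reuse the $q$-Picone bound'' is exactly the paper's Young inequality with $d=(q-1)p/q$, just organized differently). The only slight inaccuracy is your closing remark: the estimate is not a special instance of Proposition~\ref{picone} in Appendix~A (which concerns the test function $\varphi^p/(u^{p-1}+u^{q-1})$); the paper only says the computation is \emph{similar} in technique.
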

\begin{proof} 
Let $u$ be a positive solution of $(GEV;\alpha,\beta)$. 
Then, $u\in {\rm int}\,\C_+$ (see Remark~\ref{rem:positive}). 
Choose any $\varphi\in {\rm int}\,\C_+$. Then, $\varphi/u \in L^\infty(\Omega)$, 
and hence we can take 
$$
\xi = \frac{\varphi^q}{u^{q-1}} \in \W 
$$
as a test function. By the similar estimation as in the proof of 
Proposition~\ref{picone}, we have 
\begin{align}
\notag
& |\nabla u|^{p-2} \nabla u \nabla \left( \frac{\varphi^q}{u^{q-1}} \right)  
= q |\nabla u|^{p-2} \nabla u \nabla \varphi 
\left(\frac{\varphi}{u}\right)^{q-1} - 
(q-1)|\nabla u|^{p} \left(\frac{\varphi}{u}\right)^{q} 
\\
\label{eq:pic_id_1}
& \le q |\nabla u|^{p-1} |\nabla \varphi| 
\left(\frac{\varphi}{u}\right)^{q-1} - 
(q-1)|\nabla u|^{p} \left(\frac{\varphi}{u}\right)^{q} 
\leq|\nabla \varphi|^q|\nabla u|^{p-q}
\end{align}
in $\Omega$, where we use 
the standard Young's inequality 
$$
ab\le \frac{a^q}{q}+\frac{(q-1) b^{q/(q-1)}}{q}
$$
with $a=|\nabla \varphi| |\nabla u|^{p-1-d}$, 
$b=(\varphi/u)^{q-1}|\nabla u|^d$ 
and $d=(q-1)p/q=p-p/q$. 

At the same time, the standard Picone identity \cite{Alleg} implies
\begin{equation}
\label{eq:pic_id_2}
|\nabla u|^{q-2} \nabla u \nabla \left( \frac{\varphi^q}{u^{q-1}} \right)  
\leq|\nabla \varphi|^q \quad \mbox{in } \Omega.
\end{equation}

Applying now estimations \eqref{eq:pic_id_1} and \eqref{eq:pic_id_2} to the definition of a weak solution, we obtain the desired result.
\end{proof}

\begin{proof*}{Proposition~\ref{prop:special-2}} 
\textbf{Part (i).}
Put $\alpha=\lambda^*(s)+s>\lambda_1(p)$ and 
$\beta=\lambda^*(s)>\lambda_1(q)$ for some $s\in\mathbb{R}$. 
By the definition of $\lambda^*(s)$, there exists $\beta_n>\lambda_1(q)$ 
such that $\beta_n\to \beta=\lambda^*(s)$ and 
$(GEV;\alpha_n,\beta_n)$ has a positive solution 
$u_n$, where $\alpha_n=\beta_n+s$. 
Since $\alpha_n\to \beta+s=\lambda^*(s)+s>\lambda_1(p)$, 
Lemma~\ref{lem:bdd-sol} guarantees the boundedness of $\{u_n\}$ 
in $\W$. 

Then, $\{u_n\}$ is a bounded Palais--Smale sequence for
the functional $I_{\alpha,\beta}$ defined by \eqref{def:I}. 
Indeed, 
$I'_{\alpha_n,\beta_n}(u_n)=0$ and so 
\begin{align*}
\|I_{\alpha,\beta}'(u_n)\|_{W_0^{1,p}(\Omega)^*} &=
\|I_{\alpha,\beta}'(u_n)
-I_{\alpha_n,\beta_n}'(u_n)\|_{W_0^{1,p}(\Omega)^*} 
\\
&\le \frac{|\alpha_n-\alpha|}{p\lambda_1(p)^{1/p}}\|u_n\|_p^{p-1} 
+\frac{|\beta_n-\beta|}{q\lambda_1(p)^{1/q}}\|u_n\|_q^{q-1}
|\Omega|^{1/q-1/p}. 
\end{align*}
On the other hand, by a standard argument based on the $(S_+)$
property of $-\Delta_p$ (cf., \protect{\cite[Lemma
8]{T-2014}}), 
it can be readily shown that
$I_{\alpha,\beta}$ satisfies the bounded
Palais--Smale condition. Hence, $\{u_n\}$ has a subsequence
converging to some critical point $u_0$ of $I_{\alpha,\beta}$. 
Thus, if we show that $u_0\not=0$, then $u_0$ is a positive solution 
of $(GEV;\alpha,\beta)$, whence the proof is complete. 

Now, we will prove that $u_0\not=0$ by way of contradiction. 
Assume that $u_n$ strongly converges to $0$ in $\W$. 
Applying Lemma~\ref{lem:bdd-small} with $\varphi=\varphi_q$, 
we see that 
any $u_n$ satisfies the inequality 
$$
\intO |\nabla \varphi_q|^q|\nabla u_n|^{p-q}\,dx
+\intO |\nabla \varphi_q|^q\,dx 
\ge \intO \left(\alpha_n u_n^{p-q}+\beta_n\right)\varphi_q^q\,dx. 
$$
Letting $n\to\infty$, we have 
$\|\nabla \varphi_q\|_q^q\ge \beta\|\varphi_q\|_q^q$. 
However, this is a contradiction, since
$\lambda_1(q)\|\varphi_q\|_q^q
=\|\nabla \varphi_q\|_q^q\ge \beta\|\varphi_q\|_q^q$ 
and $\beta>\lambda_1(q)$. 

\smallskip
\par\noindent
\textbf{Part (ii).}
From Part (iii) of Proposition \ref{prop:prop} it follows that 
$(GEV;\lambda^*(s)+s, \lambda^*(s)) \equiv (GEV;\lambda_1(q)+s, \lambda_1(q))$ for all $s \geq s^*_+$.
Suppose, contrary to our claim, that $(GEV;\lambda_1(q)+s, \lambda_1(q))$ possesses a positive solution $u$ for some $s > s^*_+$.
As in the proof of Part (iii), Proposition \ref{prop:prop}, we replace $\varphi_p$ by $\varphi_q$ in Picone's identities \eqref{eq:equiv:pic1} and \eqref{eq:equiv:pic2}, and get
\begin{align*}
\notag
&\int_\Omega |\nabla u|^{p-2}  \nabla u \nabla \left( \frac{\varphi_q^p}{u^{p-1}} \right)\, dx
+
\int_\Omega |\nabla u|^{q-2}  \nabla u \nabla \left( \frac{\varphi_q^p}{u^{p-1}} \right)\, dx
\\ 
\notag
&=
(\lambda_1(q) + s) \int_\Omega \varphi_q^p \, dx 
+
\lambda_1(q) \int_\Omega \varphi_q^p u^{q-p} \, dx 
\leq 
\int_\Omega |\nabla \varphi_q|^p \, dx + \lambda_1(q) \int_\Omega \varphi_q^{p} u^{q-p} \, dx,
\end{align*}
which implies that
$$
s \leq \frac{\int_\Omega |\nabla \varphi_q|^p \, dx}{\int_\Omega \varphi_q^p \, dx} - \lambda_1(q) = s^*_+.
$$
However, it is a contradiction, since $s > s^*_+$.

\end{proof*}


\section{Minimax formula for $\lambda^*(s)$}
In this section we prove that definitions \eqref{def:lambda_1} and \eqref{def:lambda_2} are, in fact, equivalent.

\begin{proof*}{Proposition~\ref{prop:equiv}}
Fix any $s \in \mathbb{R}$.
Since $\lambda^*(s)$ is bounded from below by Part (ii) of Proposition \ref{prop:prop}, the definition \eqref{def:lambda_1} implies the existence of a sequence of solutions $\{ u_n \}_{n=1}^{\infty} \in {\rm int}\,\C_+$ (see Remark \ref{rem:positive}) for $(GEV; \lambda_n + s, \lambda_n)$ such that $\lambda_n \to \lambda^*(s)$ as $n \to \infty$ and each $\lambda_n \leq \lambda^*(s)$ (note that here we allow $\lambda_n = \lambda^*(s)$ for all $n \in \mathbb{N}$).

Using $u_n$ as an admissible function for \eqref{def:lambda_2} and noting that 
for any $0\not=\varphi\in C_0^1(\overline{\Omega})_+$ 
the denominator of $\mathcal{L}_s(u_n;\varphi)$ is positive, 
namely, 
$$
\int_\Omega (u_n^{p-1}+u_n^{q-1})\varphi\,dx>0, 
$$
we get
$$
\Lambda^*(s) \geq \inf_{\varphi \in C^1_0(\overline{\Omega})_+ \setminus \{0\}}  \mathcal{L}_s(u_n; \varphi) = \lambda_n \to \lambda^*(s),
$$
and therefore $\Lambda^*(s) \geq \lambda^*(s)$ for any $s \in \mathbb{R}$.

Assume now that there exists $s_0 \in \mathbb{R}$ such that $\Lambda^*(s_0) > \lambda^*(s_0)$. Then, by the definition of $\Lambda^*(s)$, there exist $w \in {\rm int}\,\C_+$ for which we have
$$
\Lambda^*(s_0) \geq \mu := \inf_{\varphi \in C^1_0(\overline{\Omega})_+ \setminus\{0\}} \mathcal{L}_{s_0}(w; \varphi) > \lambda^*(s_0).
$$
However, this implies that $w$ is a positive super-solution of $(GEV; \mu + s_0, \mu)$. Indeed
$$
\mathcal{L}_{s_0}(w; \varphi) \geq \mu > \lambda^*(s_0) \quad \mbox{for all } \varphi \in C^1_0(\overline{\Omega})_+ \setminus\{0\},
$$
and therefore
$$
\intO \left(|\nabla w|^{p-2}+|\nabla w|^{q-2}\right)\nabla w \nabla \varphi\,dx 
-\intO \left((\mu + s_0)|w|^{p-2} + \mu |w|^{q-2} \right) w \varphi\,dx \geq 0 
$$
for any $\varphi\in \W$ satisfying $\varphi(x) \ge 0$ a.e. $x\in\Omega$, due to the approximation arguments. Hence, recalling that $\mu>\lambda^*(s_0)\ge \lambda_1(q)$, Lemma \ref{lem:ss-method} guarantees the existence of a positive solution for $(GEV; \mu + s_0, \mu)$, however it contradicts the definition of $\lambda^*(s_0)$. 
\end{proof*}

\appendix 
\section{The Picone identity for the $(p,q)$-Laplacian}
In this section we prove the variant of Picone's-type identity (cf. \cite{Alleg}), which turns to be useful for problems with $(p,q)$-Laplacian.

First we prepare one auxiliary result.  Denote
\begin{equation}
\label{eq:g}
g(p,q;t):=\frac{(p-1) t^{p-2} + (q-1) t^{q-2}}{(p-1) 
\left(t^{p-1} + t^{q-1}\right)^{\frac{p-2}{p-1}}} 
\end{equation}
\begin{lemma}\label{lem-1} 
Let $1<q,p<\infty$. Then $\inf_{t>0} g(p,q; t) > 0$.
\end{lemma}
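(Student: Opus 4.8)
The plan is to analyze the behavior of $g(p,q;t)$ at the two endpoints $t\to 0^+$ and $t\to+\infty$, and then observe that $g$ is continuous and strictly positive on $(0,\infty)$, so the infimum can only fail to be positive if one of the limiting values is $0$. I would first record that the numerator $(p-1)t^{p-2}+(q-1)t^{q-2}$ and the denominator $(p-1)(t^{p-1}+t^{q-1})^{(p-2)/(p-1)}$ are both strictly positive and continuous for $t>0$ (the denominator is a positive power of a positive quantity; note $p>1$ so $p-2>-1$, but the sign of $p-2$ is irrelevant since the base is positive), hence $g(p,q;t)>0$ pointwise and $g$ is continuous on $(0,\infty)$. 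Consequently $\inf_{t>0} g(p,q;t)>0$ will follow once I show $\liminf_{t\to 0^+} g(p,q;t)>0$ and $\liminf_{t\to\infty} g(p,q;t)>0$.

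For the endpoint analysis, recall $1<q<p$. As $t\to+\infty$, the dominant power in the numerator is $(p-1)t^{p-2}$ and in the base $t^{p-1}+t^{q-1}$ it is $t^{p-1}$, so
\[
g(p,q;t)\sim \frac{(p-1)t^{p-2}}{(p-1)\,t^{(p-1)\cdot\frac{p-2}{p-1}}}=\frac{t^{p-2}}{t^{p-2}}=1,
\]
hence $g(p,q;t)\to 1$ as $t\to\infty$. As $t\to 0^+$, the dominant (largest) power is now the one with the smaller exponent: in the numerator $(q-1)t^{q-2}$ dominates $(p-1)t^{p-2}$ since $q-2<p-2$, and in the base $t^{q-1}$ dominates $t^{p-1}$. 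Thus
\[
g(p,q;t)\sim \frac{(q-1)t^{q-2}}{(p-1)\,t^{(q-1)\cdot\frac{p-2}{p-1}}}
=\frac{q-1}{p-1}\;t^{\,q-2-(q-1)\frac{p-2}{p-1}}.
\]
A short computation gives the exponent $q-2-(q-1)\frac{p-2}{p-1}=\frac{(q-1)(p-1)-(p-1)-(q-1)(p-2)}{p-1}=\frac{(q-1)-(p-1)}{p-1}=\frac{q-p}{p-1}<0$, so $g(p,q;t)\to+\infty$ as $t\to 0^+$. Since both limits are strictly positive (one equal to $1$, the other $+\infty$), and $g$ is positive and continuous on any compact subinterval of $(0,\infty)$, the infimum over $t>0$ is attained on a compact set or approached at the endpoints, and in every case is bounded below by a positive number.

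The only mildly delicate point — and the one I would be most careful about — is making the asymptotic comparisons rigorous rather than heuristic: concretely, I would factor out the dominant power explicitly, e.g. write $t^{p-1}+t^{q-1}=t^{q-1}(1+t^{p-q})$ near $0$ and $=t^{p-1}(1+t^{q-p})$ near $\infty$, substitute into $g$, and take honest limits of the resulting bounded continuous expressions; this avoids any hand-waving with the fractional power $(\cdot)^{(p-2)/(p-1)}$. No genuine obstacle is expected — the statement is purely a one-variable calculus fact — so the "hard part" is merely bookkeeping with the exponents.
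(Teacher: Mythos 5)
Your argument is correct, but it takes a different route from the paper's. You establish positivity of the infimum by a limit-plus-compactness argument: you compute $g(p,q;t)\to 1$ as $t\to\infty$ and $g(p,q;t)\to+\infty$ as $t\to 0^+$ (after factoring out the dominant power, exactly as you propose in your last paragraph), and then invoke continuity and pointwise positivity on compact subintervals. The paper instead factors $t^{p-2}$ out of the numerator and $t^{p-1}+t^{q-1}=t^{p-1}(1+t^{q-p})$ out of the base of the denominator, and then bounds the numerator coefficients below by $\widetilde m:=\min\{p-1,q-1\}$; this yields the single-line pointwise estimate
\[
g(p,q;t)\;\ge\;\frac{\widetilde m}{p-1}\left(1+t^{q-p}\right)^{\frac{1}{p-1}}\;>\;\frac{\widetilde m}{p-1}\;>\;0
\qquad\text{for all } t>0,
\]
which gives an explicit uniform constant with no limiting or compactness step. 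What the paper's approach buys is brevity and an explicit value of the infimum's lower bound; what yours buys is that it reads off the precise asymptotic behaviour at both endpoints, which is more information than the lemma needs. One small point to tidy up: you write ``recall $1<q<p$,'' but the lemma is stated for arbitrary $1<q,p<\infty$ precisely because Proposition A.2 applies it to both $g(p,q;\cdot)$ and $g(q,p;\cdot)$; your endpoint analysis goes through verbatim after swapping the roles of $p$ and $q$ (the limits $1$ and $+\infty$ simply exchange endpoints), but you should either say so or drop the ordering assumption, as the paper's estimate does automatically.
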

\begin{proof} 
Let us denote $\widetilde{m} :=	\min\{p-1,q-1\}$. By standard calculations we get
\begin{align*}
g(p,q;t) &= \frac{(p-1) t^{p-2} + (q-1) t^{q-2}}{(p-1) 
\left(t^{p-1} + t^{q-1}\right)^{\frac{p-2}{p-1}}} = 
 \frac{\widetilde{m} t^{p-2}\left(\frac{p-1}{\widetilde{m}} + \frac{q-1}{\widetilde{m}} t^{q-p}\right)}{(p-1) t^{p-2} \left(1 + t^{q-p}\right)^{\frac{p-2}{p-1}}} \\
 &\geq \frac{\widetilde{m} \left(1 + t^{q-p}\right)}{(p-1) \left(1 + t^{q-p}\right)^{\frac{p-2}{p-1}}} = \frac{\widetilde{m}}{p-1}\left(1 + t^{q-p}\right)^{\frac{1}{p-1}} > \frac{\widetilde{m}}{p-1} > 0
\end{align*}
for all $t > 0$, which completes the proof.
\end{proof} 

\begin{proposition}\label{picone} 
Let $1<q<p<\infty$. Then there exists $\rho > 0$ such that for any differentiable functions $u > 0$ and $\varphi \geq 0$ in $\Omega$ it holds
\begin{equation}
\label{eq:picone}
\left(|\nabla u|^{p-2}  + |\nabla u|^{q-2} \right) \nabla u \nabla \left( \frac{\varphi^p}{u^{p-1} + u^{q-1}} \right) 
\leq \frac{|\nabla \varphi|^p + |\nabla \left(\varphi^{p/q}\right)|^q}{\rho}.
\end{equation}
\end{proposition}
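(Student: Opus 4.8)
The plan is to establish the pointwise inequality \eqref{eq:picone} by splitting the left-hand side into its $p$-part and $q$-part and treating each separately. For the $q$-part, I would simply invoke the classical Picone identity of Allegretto--Huang \cite{Alleg}: since $u>0$ and $\varphi\geq 0$ are differentiable, one has
$$
|\nabla u|^{q-2}\nabla u\nabla\left(\frac{\varphi^q}{u^{q-1}}\right)\leq|\nabla\varphi|^q \quad\text{in }\Omega,
$$
and rewriting $\varphi^p/(u^{p-1}+u^{q-1})$ in terms of an auxiliary function should let me bound $|\nabla u|^{q-2}\nabla u\nabla(\varphi^p/(u^{p-1}+u^{q-1}))$ by $|\nabla(\varphi^{p/q})|^q$ up to a constant. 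The cleanest route is probably to write $\psi:=\varphi^{p/q}$, so $\psi^q=\varphi^p$, and observe that $\varphi^p/(u^{p-1}+u^{q-1})\leq\psi^q/u^{q-1}$, then control the gradient term; but since the expression is not monotone in an obvious way, I would instead expand $\nabla(\varphi^p/(u^{p-1}+u^{q-1}))$ directly.

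The core computation is the expansion of the gradient of the quotient. Writing $w:=u^{p-1}+u^{q-1}$, we have
$$
\nabla\left(\frac{\varphi^p}{w}\right)=\frac{p\varphi^{p-1}\nabla\varphi}{w}-\frac{\varphi^p\nabla w}{w^2},\qquad \nabla w=\big((p-1)u^{p-2}+(q-1)u^{q-2}\big)\nabla u.
$$
Plugging this in, the left-hand side of \eqref{eq:picone} becomes
$$
\big(|\nabla u|^{p-2}+|\nabla u|^{q-2}\big)\left(\frac{p\varphi^{p-1}\nabla u\nabla\varphi}{w}-\frac{\varphi^p\big((p-1)u^{p-2}+(q-1)u^{q-2}\big)|\nabla u|^2}{w^2}\right).
$$
Here I expect the main obstacle: one must show the ``bad'' negative quadratic term absorbs enough of the cross term to leave something bounded by $(|\nabla\varphi|^p+|\nabla(\varphi^{p/q})|^q)/\rho$. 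The key structural fact is Lemma \ref{lem-1}: the denominator in $g(p,q;t)$ is exactly $(p-1)(t^{p-1}+t^{q-1})^{(p-2)/(p-1)}$, which matches $w^{(p-2)/(p-1)}$ with $t=u$, and the numerator $(p-1)t^{p-2}+(q-1)t^{q-2}$ is precisely the coefficient appearing in $\nabla w$. So $\inf_{t>0}g(p,q;t)>0$ will supply the uniform positive lower bound needed to close the estimate via a Young-type inequality of the form $ab\le\varepsilon a^{p'}+C_\varepsilon b^{p}$ applied to the $p$-homogeneous part, after factoring out $w$-powers.

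Concretely, for the $p$-part I would apply the pointwise inequality $|\nabla u|^{p-1}|\nabla\varphi|\,\varphi^{p-1}/w\le\big(\text{something}\big)$ using Young's inequality with exponents $p$ and $p/(p-1)$, choosing the split of powers of $u$ between the two factors so that the leftover negative term $-\varphi^p\big((p-1)u^{p-2}+(q-1)u^{q-2}\big)|\nabla u|^p/w^2$ dominates; Lemma \ref{lem-1} guarantees the ratio of these two coefficients is bounded below, which is exactly what makes the absorption work with a $\rho$ independent of $u,\varphi$. The $q$-part is handled analogously but is easier, since it corresponds to the pure Picone inequality applied after rewriting $\varphi^p/w$; here one uses $w\ge u^{q-1}$ to get $\varphi^p/w\le\varphi^p/u^{q-1}=\psi^q/u^{q-1}$ where $\psi=\varphi^{p/q}$, but because of the sign of the remaining terms one must again be careful and expand directly rather than use monotonicity. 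Once both parts are bounded, summing gives \eqref{eq:picone} with $\rho=\rho(p,q)>0$, and I would state $\rho$ in terms of $\inf_{t>0}g(p,q;t)$ and $\widetilde m/(p-1)$ from the proof of Lemma \ref{lem-1}.
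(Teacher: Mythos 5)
Your proposal follows essentially the same route as the paper's proof: expand $\nabla\bigl(\varphi^p/(u^{p-1}+u^{q-1})\bigr)$ directly, bound the cross term by Young's inequality with a tunable parameter, and use the uniform lower bound $\inf_{t>0}g(p,q;t)>0$ from Lemma~\ref{lem-1} (and symmetrically $\inf_{t>0}g(q,p;t)>0$ for the $q$-part, with $\psi=\varphi^{p/q}$) to make the negative term absorb the Young remainder, finally taking $\rho$ as the minimum of the two resulting constants. You also correctly identify and discard the tempting but invalid shortcut of comparing $\varphi^p/(u^{p-1}+u^{q-1})$ with $\psi^q/u^{q-1}$ pointwise, which is exactly why the paper expands directly as well.
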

\begin{proof}
First, by standard calculations we get
\begin{align}
\notag
& |\nabla u|^{p-2} \nabla u \nabla \left( \frac{\varphi^p}{u^{p-1} + u^{q-1}} \right)   \\
  \notag
& = p |\nabla u|^{p-2} \nabla u \nabla \varphi \frac{\varphi^{p-1}}{u^{p-1} + u^{q-1}} - 
|\nabla u|^{p} \varphi^p \frac{(p-1) u^{p-2} + (q-1) u^{q-2}}{\left(u^{p-1} + u^{q-1}\right)^2} \\
\label{eq:picone:1}
& \leq
p |\nabla u|^{p-1} |\nabla \varphi| \frac{\varphi^{p-1}}{u^{p-1} + u^{q-1}} - 
|\nabla u|^{p} \varphi^p \frac{(p-1) u^{p-2} + (q-1) u^{q-2}}{\left(u^{p-1} + u^{q-1}\right)^2}
\end{align}
in $\Omega$. 
Applying to the first term Young's inequality 
$$
ab =\frac{a}{\rho^{\frac{p-1}{p}}}\,\rho^{\frac{p-1}{p}} b 
\leq \frac{|a|^p}{p\rho^{p-1}} 
+ \frac{\rho (p-1) |b|^{\frac{p}{p-1}}}{p}
$$
with 
$a= |\nabla \varphi|$,  
$b= |\nabla u|^{p-1} \frac{\varphi^{p-1}}{u^{p-1} + u^{q-1}}$ and any $\rho>0$, 
we obtain 
\begin{align*}
\eqref{eq:picone:1} & \leq 
\frac{|\nabla \varphi|^p}{\rho^{p-1}} + \frac{\rho(p-1) |\nabla u|^{p} \varphi^p}{\left( u^{p-1} + u^{q-1} \right)^{\frac{p}{p-1}}} - 
|\nabla u|^{p} \varphi^p \frac{(p-1) u^{p-2} + (q-1) u^{q-2}}{\left(u^{p-1} + u^{q-1}\right)^2} \\
& = \frac{|\nabla \varphi|^p}{\rho^{p-1}} + 
\frac{\rho(p-1) |\nabla u|^{p} \varphi^p}{\left( u^{p-1} + u^{q-1} \right)^{\frac{p}{p-1}}} \left[ 1 - \frac{(p-1) u^{p-2} + (q-1) u^{q-2}}{\rho(p-1) \left(u^{p-1} + u^{q-1}\right)^{\frac{p-2}{p-1}}} \right] \\
&= \frac{|\nabla \varphi|^p}{\rho^{p-1}} + 
\frac{\rho(p-1) |\nabla u|^{p} \varphi^p}{\left( u^{p-1} + u^{q-1} \right)^{\frac{p}{p-1}}} \left[ 1 - \frac{g(p,q;u)}{\rho}\right]
\end{align*} 
in $\Omega$, 
where $g(p,q;t)$ is defined by \eqref{eq:g}. Since Lemma~\ref{lem-1} 
implies that $\inf_{t>0} g(p,q;t)$ is positive, we can choose $\rho_1 > 0$ small enough to satisfy
$\inf_{t>0} g(p,q;t) \geq \rho_1$. 
This yields $[1-g(p,q;u)/\rho_1] \le 0$ in $\Omega$, and therefore 
\begin{equation}
\label{eq:picone:result:1}
|\nabla u|^{p-2} \nabla u \nabla \left( \frac{\varphi^p}{u^{p-1} + u^{q-1}} \right) 
\leq \frac{|\nabla \varphi|^p}{\rho_1^{p-1}} 
\quad {\rm in}\ \Omega. 
\end{equation}
Similarly, if we choose $\rho_2 > 0$ satisfying $\inf_{t>0} g(q,p;t) \geq \rho_2$, 
then for $\psi=\varphi^{p/q}$ (note $p/q>1$ and $\varphi^p=\psi^q$) we obtain 
\begin{align}
\notag
& |\nabla u|^{q-2} \nabla u \nabla \left( \frac{\psi^q}{u^{p-1} + u^{q-1}} \right) \\
&\le \frac{|\nabla \psi|^q}{\rho_2^{q-1}} + 
\frac{\rho_2 (q-1) |\nabla u|^{q} \psi^q}{\left( u^{p-1} + u^{q-1} \right)^{\frac{q}{q-1}}} \left[ 1 - \frac{g(q,p;u)}{\rho_2}\right] 
\le \frac{|\nabla \psi|^q}{\rho_2^{q-1}}. 
\label{eq:picone:result:2}
\end{align}
Combining now \eqref{eq:picone:result:1} with \eqref{eq:picone:result:2} and taking $\rho := \min\{ \rho_1^{p-1}, \rho_2^{q-1} \}$ we establish the formula \eqref{eq:picone}.
\end{proof}

\section{Non-monotonicity of $\lambda_1(p)$ with respect to $p$}

The main aim of this section is to provide sufficient conditions for  $\lambda_1(p)$ 
to be a non-monotone function w.r.t. $p$.

Throughout this section, we write $\lambda_1(p, \Omega)$ to reflect the dependence of the first eigenvalue $\lambda_1(p)$ on the domain $\Omega$, on which it is defined.

By $B_R$ we denote an open ball in $\mathbb{R}^N$ of radius $R$. We don't fix the center of $B_R$
and write $B_R \subset \Omega$ ($\Omega \subset B_R$) if such center exists.
\begin{proposition} 
Assume that  $r, R \in (1, e)$ and a domain $\Omega \subset \mathbb{R}^N$ ($N \geq 2$) satisfy 
$$
\max\{1,e\ln R\}<r\le R<e \quad {\rm and} \quad 
B_{r} \subset \Omega \subset B_R. 
$$
Then, 
the function $\lambda_1(p, \Omega)$ w.r.t. $p$ has a maximum point $p^*>1$, and hence 
it is non-monotone.

In particular, if $\Omega=B_R$ with $R \in (1,e)$, then $\lambda_1(p,\Omega)$ is non-monotone w.r.t. $p$. 
\end{proposition}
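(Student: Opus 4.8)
The plan is to squeeze $\lambda_1(p,\Omega)$ between the first eigenvalues of the inscribed and circumscribed balls and then to examine the resulting function of $p$ at three places: near $p=1$, at $p=2$, and as $p\to+\infty$. Monotonicity of $\lambda_1(p,\cdot)$ under set inclusion (restriction of admissible functions) together with $B_r\subset\Omega\subset B_R$ gives $\lambda_1(p,B_R)\le\lambda_1(p,\Omega)\le\lambda_1(p,B_r)$ for all $p>1$, while the dilation $x\mapsto x/\rho$ gives the scaling $\lambda_1(p,B_\rho)=\rho^{-p}\lambda_1(p,B_1)$. So everything reduces to the single function $\mu(p):=\lambda_1(p,B_1)$ and to controlling it at the endpoints and at $p=2$.

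First I would show $\lambda_1(p,\Omega)\to 0$ as $p\to+\infty$: testing the Rayleigh quotient on $B_r$ with the Lipschitz function $u(x)=(r-|x|)_+$ gives $\lambda_1(p,B_r)\le (p+N)^N/(r^{p}N!)$, which tends to $0$ since $r>1$. Secondly, I would use the classical fact that $\lambda_1(p,B_r)\to h(B_r)$ as $p\to1^+$, where $h(B_r)=N/r$ is the Cheeger constant of the ball (pinned down by the isoperimetric inequality), so $\limsup_{p\to1^+}\lambda_1(p,\Omega)\le N/r$. Thirdly, at $p=2$ the operator is the Laplacian, so $\mu(2)=\lambda_1(2,B_1)=j_{N/2-1,1}^2=:\Lambda_N$, the first Dirichlet eigenvalue of $-\Delta$ on the unit ball; by known lower bounds on the first Bessel zero (with a direct check for small $N$) one has $\Lambda_N/N>e$ for every $N\ge 2$. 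From the sandwich, $\lambda_1(2,\Omega)\ge\lambda_1(2,B_R)=\Lambda_N R^{-2}$, and the remaining task is to verify $\Lambda_N R^{-2}>N/r$, i.e. $R^2/r<\Lambda_N/N$.

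This last inequality is the heart of the matter and is where the number $e$ is consumed. If $R\le e^{1/e}$, then $R^2/r<R^2\le e^{2/e}<\Lambda_2/2\le\Lambda_N/N$. If $R>e^{1/e}$, the hypothesis reads $r>e\ln R$, so it suffices to prove $R^2/\ln R<e\,\Lambda_N/N$; the function $h(R):=R^2/\ln R$ has derivative of the sign of $2\ln R-1$, hence decreases on $(e^{1/e},\sqrt e)$ and increases on $(\sqrt e,e)$, so it stays below $\max\{h(e^{1/e}),\lim_{R\to e^-}h(R)\}=e^2\le e\,\Lambda_N/N$ on the whole interval $(e^{1/e},e)$. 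Consequently $\lambda_1(2,\Omega)>N/r\ge\limsup_{p\to1^+}\lambda_1(p,\Omega)$ and also $\lambda_1(2,\Omega)>0=\lim_{p\to+\infty}\lambda_1(p,\Omega)$.

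To conclude, I would invoke the continuity of $p\mapsto\lambda_1(p,\Omega)$ on $(1,\infty)$ (which is classical; for the present purpose one may even get by with upper semicontinuity, since $\lambda_1(\cdot,\Omega)$ is an infimum of functions continuous in $p$). Then $\lambda_1(\cdot,\Omega)$ is bounded, its limsup at $1^+$ and its limit at $+\infty$ are both strictly below $\lambda_1(2,\Omega)$, so its supremum over $(1,\infty)$ is attained on a compact subinterval at some $p^*\in(1,\infty)$; this $p^*$ is the asserted maximum point, and non-monotonicity follows since a non-decreasing $\lambda_1(\cdot,\Omega)$ would violate the decay at $+\infty$ while a non-increasing one would violate $\lambda_1(2,\Omega)>\limsup_{p\to1^+}\lambda_1(p,\Omega)$. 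The special case $\Omega=B_R$ is recovered by taking $r=R$: then the hypothesis is just $e\ln R<R<e$, which holds because $R-e\ln R$ is strictly decreasing on $(1,e)$ with values $1$ at $R=1$ and $0$ at $R=e$. I expect the real obstacle to lie in the dimension-uniform estimate $\Lambda_N/N>e$: the elementary inscribed-cube bound $\lambda_1(2,B_1)\ge N\pi^2/4$ only yields $\pi^2/4<e$, so one genuinely needs the exact value at $p=2$ and the monotonicity of $N\mapsto\Lambda_N/N$.
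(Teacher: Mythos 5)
Your proof is correct in outline and reaches the same conclusion, but by a genuinely different route than the paper. The paper also works with the two endpoint limits ($\lambda_1(p,\Omega)\to h(\Omega)$ as $p\to1^+$ by Kawohl--Fridman, and $\lambda_1(p,\Omega)\to0$ as $p\to\infty$), but its interior anchor is the Benedikt--Dr\'abek lower bound $\lambda_1(p,\Omega)\ge Np/R^p$, maximized over $p$ at $p_0=1/\ln R$ to give $\lambda_1(p_0,\Omega)\ge N/(e\ln R)$; the hypothesis $r>e\ln R$ then makes this exceed $h(\Omega)\le h(B_r)=N/r$ in one line. That choice explains transparently where $e\ln R$ comes from and is completely uniform in $N$ with no special-function input. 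You instead anchor at $p=2$, using $\lambda_1(2,\Omega)\ge\lambda_1(2,B_R)=j_{N/2-1,1}^2R^{-2}$, and consume the hypothesis through the elementary two-case analysis of $R^2/\ln R$ on $(e^{1/e},e)$ --- which is fine, and your test-function computation for the $p\to\infty$ limit is actually more self-contained than the paper's citation. The price is the dimension-uniform estimate $j_{N/2-1,1}^2> eN$ (together with monotonicity of $N\mapsto j_{N/2-1,1}^2/N$, or at least the bound for every $N\ge2$), which you assert but do not prove; as you yourself note, the crude inscribed-domain bound is not enough, so this step genuinely requires a quantitative lower bound on the first Bessel zero plus a direct check at $N=2$ (where $j_{0,1}^2/2\approx2.89>e$). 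That is the one point to nail down before your argument is complete; everything else (the domain sandwich, the scaling, the upper semicontinuity giving attainment of the maximum, and the reduction of the case $\Omega=B_R$ to $e\ln R<R$ on $(1,e)$) is sound.
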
 
\begin{proof}
Note that $\lambda_1(p, \Omega)$ is a continuous function w.r.t. $p$ (see \cite[Theorem 2.1]{huang1997}). Hence, to prove that $\lambda_1(p, \Omega)$ possesses a maximum point $p^* > 1$ it is sufficient to show the existence of $p_0 > 1$ such that 
\begin{equation}
\label{eq:lnonm1}
\lambda_1(p_0, \Omega) > \max\left\{\lim_{p \to 1+0} \lambda_1(p, \Omega), \lim_{p \to \infty} \lambda_1(p, \Omega)\right\}.
\end{equation}

First we find the corresponding limits. 
On the one hand, \cite[Theorem 3.1]{huang1997} and \cite[Corollary 5]{benediktdrabek2012} yield that if there exists $r > 1$ such that $B_r \subset \Omega$, then 
\begin{equation}
\label{eq:asympt:above}
\lim_{p \to \infty} \lambda_1(p, \Omega) =  0.
\end{equation}
On the other hand, it is proved in \cite[Corollary 6]{kawohlfridman2003} that
\begin{equation}
\label{eq:asympt:below}
\lim_{p \to 1+0} \lambda_1(p, \Omega) =  h(\Omega),
\end{equation}
where $h(\Omega)$ is the so-called \textit{Cheeger constant} defined by
$$
h(\Omega) := \inf_{D \subset \Omega} \frac{|\partial D|}{|D|}.
$$
Here $|\partial D|$ and $|D|$ are $(N-1)$- and $N$-dimensional Lebesgue
measure of $\partial D$ and $D$, respectively.
Note that Cheeger's constant is known explicitly for some domains; for instance, $B_r$ has $h(B_r) = \frac{N}{r}$ (see \cite{kawohlfridman2003}).

Now to get \eqref{eq:lnonm1} we use the following estimation from \cite[Theorem 2]{benediktdrabek2012}, which holds for any $\Omega \subset B_R$:
\begin{equation}
\label{eq:lp:h}
\lambda_1(p, \Omega) \geq \frac{N p}{R^p}.
\end{equation}
Simple analysis of the function $y(p) = \frac{N p}{R^p}$ shows that if $R \in (1, e)$ then there exists a unique maximum point $p_0 = \frac{1}{\ln R} > 1$ of $y(p)$, and $y(p_0) = \frac{N}{e \ln R}$.

Let us show now the existence of $r, R \in (1, e)$ such that for any $\Omega$ with $B_r \subset \Omega \subset B_R$ it holds $y(p_0) > h(\Omega)$. Then \eqref{eq:asympt:above}, \eqref{eq:asympt:below} and \eqref{eq:lp:h} will imply \eqref{eq:lnonm1}, which proves the assertion of the proposition.

From the monotonicity of Cheeger's constant with respect to a domain (see \cite[Remark 11]{kawohlfridman2003}) it follows that $h(B_r) \geq h(\Omega)$, whenever $B_r \subset \Omega$. Therefore, it is enough to show that
\begin{equation}
\label{eq:lnonm2}
y(p_0) = \frac{N}{e \ln R} > \frac{N}{r} = h(B_r)
\end{equation}
holds for some $r, R \in (1, e)$ with $r<R$. Inequality \eqref{eq:lnonm2} is read as $r > e \ln R$. It is not hard to see that for any fixed $R \in (1, e)$ 
we have $\max\{1,e\ln R\}<R$, since the function 
$\ln t/t$ ($t>0$) has the maximum value $1/e$ only at $t=e$. 

Thus, for any $r, R \in (1, e)$ and $\Omega \in \mathbb{R}^N$ such that
$$
\max\{1,e\ln R\}<r\le R<e \quad {\rm and} \quad 
B_{r} \subset \Omega \subset B_R,
$$
the inequality \eqref{eq:lnonm1} is satisfied for some $p_0 > 1$, and this completes the proof.
\end{proof}

\section{Violation of the assumption {\bf (LI)}}
In this section we give a short one-dimensional example indicating that, in general, the first eigenvalues $\lambda_1(p, m_p)$ and $\lambda_1(q, m_q)$ of zero Dirichlet $-\Delta_p$ and $-\Delta_q$ on $\Omega$ with weights $m_p$ and $m_q$, respectively, can have the same eigenspace, that is, $\varphi_1(p, m_p) \equiv k \varphi_1(q, m_q)$ in $\Omega$ for some $k \neq 0$.

Let $u$ be a positive $C^2$-solution of 
$$
\left\{
\begin{aligned}
-u'' &= |u|^{p-2}\, u \quad {\rm in}\ (0, \pi), \\
u(0) &= u(\pi) = 0,
\end{aligned}
\right.
$$
where $p > 2$. Existence and regularity of such a solution is a classical example in various textbooks on nonlinear analysis (see, e.g., \cite[Example~7.4.7, p.~485]{drabekmilota}).

It is easy to see, that $u$ is also the first eigenfunction of zero Dirichlet $-\Delta$ on $(0, \pi)$ with the weight function $m_2(x) = |u(x)|^{p-2}$, with the corresponding eigenvalue $\lambda_1(2, m_2) =1$. Note that $m_2 > 0$ in $(0, \pi)$ and $m_2 \in L^{\infty}[0, \pi]$, since $p > 2$.

At the same time, $u$ becomes the first eigenfunction of $-\Delta_p$ on $(0, \pi)$ with weight $m_p(x) = (p-1) |u'(x)|^{p-2}$, with the eigenvalue $\lambda_1(p, m_p) =1$. Indeed, 
$$
\left\{
\begin{aligned}
-(|u'|^{p-2} u')' &\equiv -(p-1)|u'|^{p-2} u'' \\
				  &= (p-1) |u'|^{p-2} |u|^{p-2} u = m_p(x) |u|^{p-2} u \quad {\rm in}\ (0, \pi), \\
u(0) = u(\pi) &= 0.
\end{aligned}
\right.
$$
Moreover, $m_p \in L^{\infty}[0, \pi]$, since $p > 2$ and $u \in C^2[0, \pi]$, and $m_p \geq 0$ in $[0, \pi]$.

Therefore, $\lambda_1(2, m_2)$ and $\lambda_1(p, m_p)$ have the same eigenspace, i.e. {\bf (LI)} is violated.



\begin{thebibliography}{10}

\bibitem{Alleg}
{\sc Allegretto, W., and Huang, Y.-X.}
\newblock A picone's identity for the $p$-laplacian and applications.
\newblock {\em Nonlinear Analysis: Theory, Methods \& Applications 32}, 7
  (1998), 819--830.

\bibitem{anane1987}
{\sc Anane, A.}
\newblock Simplicit{\'e} et isolation de la premiere valeur propre du
  $p$-laplacien avec poids.
\newblock {\em Comptes rendus de l'Acad{\'e}mie des sciences. S{\'e}rie 1,
  Math{\'e}matique 305}, 16 (1987), 725--728.

\bibitem{benediktdrabek2012}
{\sc Benedikt, J., and Dr{\'a}bek, P.}
\newblock Estimates of the principal eigenvalue of the $p$-laplacian.
\newblock {\em Journal of Mathematical Analysis and Applications 393}, 1
  (2012), 311--315.

\bibitem{BB}
{\sc Benouhiba, N., and Belyacine, Z.}
\newblock A class of eigenvalue problems for the $(p, q)$-laplacian in $R^n$.
\newblock {\em Int. J. Pure Appl. Math. 50\/} (2012), 727--737.

\bibitem{BB-2}
{\sc Benouhiba, N., and Belyacine, Z.}
\newblock On the solutions of the $(p, q)$-laplacian problem at resonance.
\newblock {\em Nonlinear Anal. TMA 77\/} (2013), 74--81.

\bibitem{BobkovIlyasov2014}
{\sc Bobkov, V., and Il'yasov, Y.}
\newblock Maximal existence domains of positive solutions for two-parametric
  systems of elliptic equations.
\newblock {\em arXiv preprint arXiv:1406.5275\/} (2014).

\bibitem{CherIl}
{\sc Cherfils, L., and Il'Yasov, Y.}
\newblock On the stationary solutions of generalized reaction diffusion
  equations with $p\&q$-laplacian.
\newblock {\em Communications on Pure and Applied Mathematics 4}, 1 (2005),
  9--22.

\bibitem{cuestafucik}
{\sc Cuesta, M., De~Figueiredo, D., and Gossez, J.-P.}
\newblock The beginning of the Fu{\v{c}}ik spectrum for the $p$-laplacian.
\newblock {\em Journal of Differential Equations 159}, 1 (1999), 212--238.

\bibitem{drabekmilota}
{\sc Dr{\'a}bek, P., and Milota, J.}
\newblock {\em Methods of nonlinear analysis: applications to differential
  equations}.
\newblock Springer, 2013.

\bibitem{GoGu}
{\sc Gongbao, L., and Guo, Z.}
\newblock Multiple solutions for the $p\&q$-laplacian problem with critical
  exponent.
\newblock {\em Acta Mathematica Scientia 29}, 4 (2009), 903--918.

\bibitem{huang1997}
{\sc Huang, Y.}
\newblock On the eigenvalues of the $p$-laplacian with varying $p$.
\newblock {\em Proceedings of the American Mathematical Society 125}, 11
  (1997), 3347--3354.

\bibitem{ilcomp}
{\sc Il'yasov, Y.}
\newblock On positive solutions of indefinite elliptic equations.
\newblock {\em Comptes Rendus de l'Acad{\'e}mie des Sciences-Series
  I-Mathematics 333}, 6 (2001), 533--538.

\bibitem{ilfunc}
{\sc Il'yasov, Y.~S.}
\newblock Bifurcation calculus by the extended functional method.
\newblock {\em Functional Analysis and Its Applications 41}, 1 (2007), 18--30.

\bibitem{KTT}
{\sc Kajikiya, R., Tanaka, M., and Tanaka, S.}
\newblock Bifurcation of positive solutions for the one dimensional $(p,
  q)$-laplace equation.
\newblock {\em submitted\/}.

\bibitem{kawohlfridman2003}
{\sc Kawohl, B., and Fridman, V.}
\newblock Isoperimetric estimates for the first eigenvalue of the $p$-laplace
  operator and the Cheeger constant.
\newblock {\em Comment. Math. Univ. Carolin 44}, 4 (2003), 659--667.

\bibitem{Lieberman}
{\sc Lieberman, G.~M.}
\newblock Boundary regularity for solutions of degenerate elliptic equations.
\newblock {\em Nonlinear Analysis: Theory, Methods \& Applications 12}, 11
  (1988), 1203--1219.

\bibitem{L}
{\sc Lieberman, G.~M.}
\newblock The natural generalizationj of the natural conditions of
  Ladyzhenskaya and Ural'tseva for elliptic equations.
\newblock {\em Communications in Partial Differential Equations 16}, 2-3
  (1991), 311--361.

\bibitem{MW}
{\sc Mawhin, J., and Willem, M.}
\newblock {\em Critical point theory and Hamiltonian systems}.
\newblock Springer, 1989.

\bibitem{Mih}
{\sc Mihailescu, M.}
\newblock An eigenvalue problem possessing a continuous family of eigenvalues
  plus an isolated eigenvalue.
\newblock {\em Communications on Pure and Applied Analysis 10\/} (2011),
  701--708.

\bibitem{MMT}
{\sc Miyajima, S., Motreanu, D., and Tanaka, M.}
\newblock Multiple existence results of solutions for the Neumann problems via
  super-and sub-solutions.
\newblock {\em Journal of Functional Analysis 262}, 4 (2012), 1921--1953.

\bibitem{MT}
{\sc Motreanu, D., and Tanaka, M.}
\newblock On a positive solution for $(p,q)$-laplace equation with indefinite
  weight.
\newblock {\em To appear in Minimax Theory and its Applications\/}.

\bibitem{PS}
{\sc Pucci, P., and Serrin, J.}
\newblock {\em The maximum principle}, vol.~73.
\newblock Springer, 2007.

\bibitem{Ramos}
{\sc Ramos~Quoirin, H.}
\newblock An indefinite type equation involving two $p$-laplacians.
\newblock {\em Journal of Mathematical Analysis and Applications 387}, 1
  (2012), 189--200.

\bibitem{T-2014}
{\sc Tanaka, M.}
\newblock Generalized eigenvalue problems for $(p,q)$-laplacian with indefinite
  weight.
\newblock {\em Journal of Mathematical Analysis and Applications 419}, 2
  (2014), 1181 -- 1192.

\bibitem{T-Uniq}
{\sc Tanaka, M.}
\newblock Uniqueness of a positive solution and existence of a sign-changing
  solution for $(p,q)$-laplace equation.
\newblock {\em J. Nonlinear Funct. Anal. 2014\/} (2014), 1--15.

\bibitem{WeYand2009}
{\sc Yin, H., and Yang, Z.}
\newblock Multiplicity of positive solutions to a $p-q$-laplacian equation
  involving critical nonlinearity.
\newblock {\em Nonlinear Analysis: Theory, Methods \& Applications 75}, 6
  (2012), 3021 -- 3035.

\bibitem{zeidler}
{\sc Zeidler, E.}
\newblock {\em Nonlinear Functional Analysis and its Application III.:
  Variational Methods and Optimization.}
\newblock Springer-Verlag GmbH, 1985.

\end{thebibliography}

\end{document}